\newcommand{\W}[2]{\mathcal W_2^2(#1,#2)}
\newcommand{\xp}{\tilde X_{t,i}}
\newcommand{\xpi}[1][i]{ {\tilde X^+}_{t,#1}}
\newcommand{\z}[1][i]{Z_{t,#1}}
\newcommand{\zt}[1][i]{\tilde Z_{t,#1}}
\newcommand{\g}{\mathcal G}
\newcommand{\e}{\mathbb E}
\newcommand{\gradf}[1]{\nabla f(#1)}
\newcommand{\bern}{H_{t,i}-1/k}
\newcommand{\mtk}{M_{t,k}}
\newcommand{\xt}{\tilde X_{t,0}}
\newcommand{\y}{Y_{t,i}}
\newcommand{\is}{\mathbf 1_\mathcal E}
\newcommand{\ib}{\mathbf 1_{5\beta^2>1}}
\newcommand{\p}{\mathbb P}
\newcommand{\ebk}{\frac{\eta}{k}}
\newcommand{\txtki}{\tilde X_{t,i}}
\newcommand{\x}{X_{t,i}}
\newcommand{\tutki}{\tilde U_{t,i}}
\newcommand{\tvtki}{\tilde V_{t,i}}
\newcommand{\twtki}{\tilde W_{t,i}}
\newcommand{\utkt}{\tilde U_{t,0}}
\newcommand{\vtkt}{\tilde V_{t,0}}
\def\am{A_\mathcal M}
\def\gm{G_\mathcal M}
\def\Gm{\Gamma_\mathcal M}
\def\m{\mathcal M}
\theoremstyle{definition}
\newtheorem{assumption}{Assumption}
\newtheorem{remark}{Remark}
\newtheorem{theorem}{Theorem}
\newtheorem{corollary}{Corollary}
\newtheorem{lemma}{Lemma}
\newtheorem{proposition}{Proposition}
\def\vI{\mathbf I}
\def\law{\mathrm{Law}}
\def\E{\mathbb E}
\def\eqref#1{equation~\ref{#1}}
\def\1{\bm{1}}
\DeclareMathAlphabet{\mathsfit}{\encodingdefault}{\sfdefault}{m}{sl}
\SetMathAlphabet{\mathsfit}{bold}{\encodingdefault}{\sfdefault}{bx}{n}
\def\sI{{\mathbb{I}}}
\def\sN{{\mathbb{N}}}
\def\sR{{\mathbb{R}}}
\newcommand{\KL}{D_{\mathrm{KL}}}
\title{Poisson Midpoint Method for Log Concave Sampling: Beyond the Strong Error Lower Bounds}
\author{
    Rishikesh Srinivasan \\
    Google DeepMind \\
    rishikeshsrini@google.com 
    \And
    Dheeraj Nagaraj \\
    Google DeepMind \\
    dheerajnagaraj@google.com 
  }
\begin{document}
\maketitle

\begin{abstract}
We study the problem of sampling from strongly log-concave distributions over $\mathbb{R}^d$ using the Poisson midpoint discretization (a variant of the randomized midpoint method) for overdamped/underdamped Langevin dynamics. We prove its convergence in the 2-Wasserstein distance ($\mathcal W_2$), achieving a cubic speedup in dependence on the target accuracy ($\epsilon$) over the Euler-Maruyama discretization, surpassing existing bounds for randomized midpoint methods. Notably, in the case of underdamped Langevin dynamics, we demonstrate the complexity of $\mathcal W_2$ convergence is much smaller than the complexity lower bounds for convergence in $L^2$ strong error established in the literature.
\end{abstract}

\section{Introduction}
Sampling from a density $\pi(x) \propto \exp(-f(x))$ over $\sR^d$ is of fundamental interest in physics, economics, and finance \citep{johannes2010mcmc, von2011bayesian,kobyzev2020normalizing}. Applications in computer science include volume computation \citep{vempala2010recent} and bandit optimization \citep{russo2018tutorial}. 

A popular approach is Langevin Monte Carlo (LMC) which is the Euler-Maruyama discretization of the continuous time Itô Stochastic Differential Equation (SDE) called (overdamped/underdamped) Langevin Dynamics. The convergence of LMC has been extensively studied in the literature \citep{durmus2018analysislangevinmontecarlo,vempala2022rapidconvergenceunadjustedlangevin,erdogdu2021convergencelangevinmontecarlo,cheng2017convergencelangevinmcmckldivergence,cheng2018-underdamped-lmc,dalalyan2020sampling,altschuler2025shiftedcompositionivunderdamped} 
under various assumptions on the target density $\pi,$ such as log-concavity and isoperimetry. The randomized midpoint discretization for Langevin dynamics (RLMC), introduced by Shen \& Lee \cite{shen2019randomizedmidpointmethodlogconcave} and developed further by \cite{yu2023langevinmontecarlostrongly,he2021ergodicitybiasasymptoticnormality,altschuler2024shiftedcompositioniiilocal,altschuler2025shiftedcompositionivunderdamped} considers a more sophisticated alternative to LMC. This is a randomized discretization which reduces the bias in the estimation of the Ito integral while introducing variance, leading to faster convergence bounds than for LMC. The Poisson Midpoint Method for Langevin dynamics (PLMC) was introduced by Kandasamy \& Nagaraj \cite{kandasamy2024poissonmidpointmethodlangevin} as a variant of RLMC. While \cite{kandasamy2024poissonmidpointmethodlangevin} considered the convergence of PLMC under general conditions (beyond strong log-concavity and isoperimetry) for the total variation distance via entropic central limit theorem style arguments.

The literature has focused on understanding the sharp limits to the computational complexity of sampling for various classes of algorithms, in terms of various problem parameters. In the case of strongly log-concave sampling, the work of Cao et al. \cite{Cao_2021} established lower bounds for the strong $L^2$ error of randomized algorithms which discretize Underdamped Langevin Dynamics (ULD).  Strong $L^2$ error is the $L^2$ distance between the continuous time Itô SDE solution at time $T$ and the sampling algorithm output whenever they are driven by \textit{same} Brownian motion. This demonstrated that RLMC is an optimal discretization of ULD with respect to dimension and accuracy (up to log factors), in terms of the strong $L^2$ error. However, sampling algorithm guarantees generally consider `weak' notions of distance such as total variation distance, Wasserstein distance, or the KL divergence between the law of algorithm output and the target. In particular, Wasserstein-2 distance bounds can consider the $L^2$ distance between algorithm output and the continuous time SDE driven by \textit{different but arbitrarily coupled} Brownian motions.

In this work, we revisit the complexity of PLMC for strongly log-concave sampling in order to obtain better insights into the fundamental computational limits of sampling algorithms. We provide a sharp analysis via coupling arguments to obtain better convergence guarantees, which involves a tight bound on the $\mathcal W_2$ distance between a Gaussian random-variable and a perturbed Gaussian random-variable. This is adopted from Alex Zhai's proof of the Central Limit Theorem in $\mathcal W_2$ distance \citep{zhai2017highdimensionalcltmathcalw2distance}, and leads to a substantial improvement in convergence guarantees.

\subsection{Our contributions}
We consider the computational complexity of sampling from a log-concave target distribution $\pi(x) \propto \exp(-f(x))$ over $\mathbb{R}^d$, with $f$ well-conditioned (Assumption \ref{well_conditioned_F}) with condition number $\kappa$ and strong convexity constant $\alpha$. Many classes of algorithms have been proposed and studied to this end. We study PLMC, which is a randomized algorithm for the discretization of Langevin Dynamics, with access only to $\nabla f(x)$ for arbitrary $x \in \mathbb{R}^d$. The computational complexity is measured in terms of number of evaluations of $\nabla f(x)$ (the oracle complexity).

\textbf{Limits of Sampling:} Recent works have aimed to understand the best possible computational complexity of sampling such that $\mathcal W_2^2(\mathsf{output}, \pi) \leq \frac{\epsilon^2 d}{\alpha}$ in terms of $\epsilon, d $ and $\alpha$. Cao et al. \cite{Cao_2021} show that randomized algorithms which discretize ULD require an oracle complexity of $\Omega(\epsilon^{-2/3})$ to converge in strong $L^2$ error; and RLMC achieves this rate up to logarithmic factors. It was thus widely believed in the literature that the rate of $\tilde {\mathcal O}(\epsilon^{-2/3}),$ achieved by RLMC, might also be the optimal convergence rate in $\mathcal W_2.$ The main contribution of our work is that we show it is possible to obtain $\tilde{\mathcal O}(\epsilon^{-1/3})$ complexity. Specifically, we show that:

\textbf{1.} Overdamped PLMC has an oracle complexity of $\tilde{\mathcal O}\bigr[\tfrac{\kappa^{4/3}+\kappa d^{1/3}}{\epsilon^{2/3}}\bigr]$ (Corollary \ref{OLMCcomplexity}).

\textbf{2.} Underdamped PLMC has an oracle complexity of $\tilde {\mathcal O}\big[\frac{\kappa^{7/6}d^{1/6}}{\epsilon^{1/3}}+\frac{\kappa^{\frac{11p+6}{8p+6}}d^{\frac{p}{4p+3}}}{\epsilon^{\frac{p+2}{4p+3}}}\big]$ (Corollary \ref{ULMCcomplexity}). Here $p \in \mathbb N$ is arbitrary. For $p \geq 3$, this gives a complexity of $\tilde{\mathcal O}(\epsilon^{-1/3})$. 

The best known convergence rate for overdamped LMC (in $\mathcal W_2$) is an oracle complexity of $\tilde {\mathcal O}(\epsilon^{-2})$ \cite{durmus2018analysislangevinmontecarlo}. The convergence guarantee of $\tilde {\mathcal O}(\epsilon^{-2/3})$ for overdamped PLMC is thus a cubic improvement in $\epsilon$ dependence. The best known convergence rate for underdamped LMC (in $\mathcal W_2$) is an oracle complexity of $\tilde {\mathcal O}(\epsilon^{-1}).$ The convergence rate of $\tilde {\mathcal O}(\epsilon^{-1/3})$ for underdamped LMC is again a cubic improvement.  A detailed comparison of results is in Tables \ref{tab:comparison-overdamped} and \ref{tab:comparison-underdamped}.

Concurrent work \citep[Theorem 5.11]{altschuler2025shiftedcompositionivunderdamped} claims an oracle complexity of $\tilde {\mathcal O}(\kappa^{5/6}d^{5/3}/\epsilon^{2/3})$ to achieve $\text{KL}(\mathsf{output}||\pi) \leq \epsilon^2$ for RLMC. This implies a complexity of $\tilde {\mathcal O}(\kappa^{5/6}d^{4/3}/\epsilon^{2/3})$ to achieve $\mathcal W_2^2 \leq \frac{\epsilon^2 d}{\alpha} $ via the $T_2$ inequality. This improves the dependence on $\kappa$ from $\kappa^{7/6}$ to $\kappa^{5/6}$ as compared to prior works, but with a worse dependence on $d$ and the same complexity in $\epsilon$.

\textbf{Comparison to Strong Error Lower Bounds:}
The work of Cao et al.~\cite{Cao_2021} proves a lower bound for the discretization error of underdamped Langevin dynamics via randomized algorithms. In particular, given a probability space $\Omega$, $f$ satisfying Assumption \ref{well_conditioned_F} and a Brownian Motion $B_t(\omega): \Omega \to \sR^d$, consider the strong solution to ~\eqref{eq:uld} given by $X_T(\omega) = [U_T(\omega), V_T(\omega)]$ for some $T > 0$. The algorithm $A$ to approximate $U_T(\omega)$ has oracle access to $(\nabla f(x), \int_{0}^{t}e^{\theta s}dB_t(\omega)) $ for any $x \in \sR^d$, $t \in [0,T]$ and $s\in \{0,2\}$ along with independent randomness $\tilde{\omega} \in \tilde{\Omega}$. The algorithm queries the oracle with $(x,t)$ of choice multiple times to produce an estimate $A(f,\omega,\tilde{\omega})$ for $U_T(\omega)$. This includes the case of Underdamped RLMC and Underdamped LMC. Their main result demonstrates that $\inf_{A \in \mathcal{A}_N}\sup_f E_{\omega,\tilde{\omega}}\|U_{T}(\omega)-A(f,\omega,\tilde{\omega})\|^2 \gtrsim C(T,L,\alpha)\frac{d}{N^3}$, where $\mathcal{A}_N$ is the set of all randomized algorithms as above with $N$ oracle queries. This error is the strong $L^2$ error since the algorithm and the SDE are driven by the same Brownian motion. This shows that algorithms of the class above need $N = \tilde{\Omega}_{\kappa} (\tfrac{1}{\epsilon^{2/3}})$ oracle queries to achieve strong $L^2$ error $\tfrac{\epsilon^2 d} {\alpha}$ and Underdamped RLMC achieves this optimal bound.

However, sampling algorithm guarantees consider `weak errors' which are distances between $\law(U_T(\omega))$ and $\law(A(f,\omega,\tilde{\omega}))$. In particular, the Wasserstein-2 distance is the infimum of $L^2$ errors when $U_T$ is driven by $B_t(\omega)$ and $A(\cdot)$ queries $B_t^{\prime}(\omega)$ over all couplings of \textit{distinct} Brownian motions $B_t(\omega)$ and $B'_t(\omega)$. Our results show Poisson ULMC queries the oracle $\tilde{\mathcal O}_{\kappa, d}(\frac{1}{\epsilon^{1/3}})$ times in expectation to achieve $\mathcal W_2^2(\law(A(f,\omega,\tilde{\omega})),\pi) \leq \frac{\epsilon^2 d}{\alpha}$, a quadratic improvement over RLMC. 

We note that Kandasamy \& Nagaraj~\cite{kandasamy2024poissonmidpointmethodlangevin} obtained a complexity upper bound of $\tilde {\mathcal O}_{d,\kappa}(\frac{1}{\sqrt{\epsilon}})$ for Underdamped PLMC under LSI assumptions for achieving $\mathrm{TV} \leq \epsilon$. The literature on sampling algorithms compares bounds of the form $\mathcal W_2^2 \leq \frac{\epsilon^2}{\alpha}$ to bounds of the form $\mathrm{TV} \leq \epsilon$ (see Section~\ref{sec:prior_work}). Under this comparison our bound improves over prior art. However, we note that $\mathrm{TV}$ and $\mathcal W_2^2$ bounds cannot be directly related rigorously. 
\section{Notation and problem setup}
\label{Problem_setup}
Let $\|\cdot\|$ denote the standard Euclidean norm over $\sR^d$ for some $d$ and $\vI_d$ denote the $d\times d$ identity matrix. The notation $x = \mathcal O(y)$ and $x \lesssim y$ mean there exists a universal constant $C > 0$ such that $x \leq Cy,$ and $\tilde {\mathcal O}(\cdot)$ hides logarithmic factors. The notation $\mathcal O_a(\cdot), \Omega_a(\cdot)$ mean the same as $\mathcal O(\cdot), \Omega(\cdot)$ except that they hide log factors. The number of evaluations of $\nabla f$ by the algorithm is referred to as `oracle complexity'.  We call the number of arithmetic operations (such as addition and multiplication) required on top of the oracle queries as `arithmetic complexity'. PLMC can be implemented such that $\text{arithmetic complexity} = O(d\times\text{oracle complexity})$ as shown in the sequel. Thus, as is common in the literature, we only report the oracle complexity guarantees. Let $\law(X)$ denote the law of the random variable $X$. Given two probability measures $\mu$ and $\nu$, we let $\KL(\mu||\nu)$ denote the KL divergence and $\mathrm{TV}(\mu,\nu)$ denote the total variation distance between them.

Given a sequence of probability measures $\mu_i$ over $\mathcal{X}_i$, for $i \in [n]$, a coupling is a probability measure $\Gamma$ over the product space $\prod_i \mathcal{X}_i$ such that the marginal over $\mathcal{X}_j$ is $\mu_j$. A sequence of random variables $(X_i \sim \mu_i)$ are coupled if they are defined over a common probability space, since their joint law is a coupling of $(\mu_i)_{i \in [n]}$. 
The Wasserstein-2 distance between $\mu$ and $\nu$ is given by
\small
\[\W{\mu}{\nu} := \inf_{\zeta \in \Gamma(\mu,\nu)}\int  ||x-y||^2 d\zeta(x,y),\]
\normalsize
where $\Gamma(\mu,\nu)$ denotes the set of couplings of $\mu$ and $\nu$. We make the following assumptions on $f$. 
\begin{assumption}\label{well_conditioned_F}
    The function $f: \mathbb{R}^d \to \mathbb{R}$ is $\alpha$ strongly convex and $L$ smooth for some $\alpha, L > 0$. That is, $f$ is twice continuously differentiable over $\sR^d$ and for every $x,y \in \sR^d$, we have: $ f(y)-f(x) \geq \langle \nabla f(x), y-x\rangle + \frac{\alpha}{2}\|x-y\|^2$ and
$\|\nabla f(x) - \nabla f(y)\| \leq L\|x-y\|$.
\end{assumption}
The target distribution, given by the density $\pi(x) \propto \exp(-f(x))$, is then called strongly log-concave. Our goal is to sample a random variable $X\sim \mu$ such that:\footnote{scaling $\frac{d}{\alpha}$ as considered by Shen \& Lee~\cite{shen2019randomizedmidpointmethodlogconcave}.}
\small
 \begin{equation}\label{eq:end_condition}
 \W{\mu}{\pi} \leq \frac{\epsilon^2 d}{\alpha}.
\end{equation}
\normalsize
We define the condition number $\kappa := \frac{L}{\alpha}$. Our notion of complexity is the number of gradient calls of $F$, in terms of the problem parameters $\kappa,d$ and $\epsilon.$ 
\subsection{Langevin Monte Carlo}
Suppose we wish to sample from $\pi \propto \exp(-f(x))$ in $\sR^d.$

\textbf{Overdamped LMC (OLMC)} with step-size $\eta$ is the discrete time algorithm defined by the following iterates: \small
\[X_{t+1} = X_t - \eta \nabla f(X_t) + \sqrt{2\eta}Z_t,\]\normalsize
where $Z_t \in \sR^d$ is an independent standard Gaussian. This is the Euler-Maruyama discretization of Overdamped Langevin dynamics (OLD): 
\small
\[dX_t = -\nabla f(X_t)dt + \sqrt 2 dB_t, \]
\normalsize
whose stationary distribution is $\pi.$ \citep{roberts1996exponential}

\textbf{Underdamped LMC (ULMC):}\label{ulmc_definition} Let $U_t \in \sR^d$ denote position, and $V_t \in \sR^d$ denote momentum. ULMC with step-size $\eta$ is defined via the following recursion:\small
\[\begin{bmatrix}
    U_{t+1} \\V_{t+1}
\end{bmatrix} = A(\eta)\begin{bmatrix}
    U_t \\ V_t
\end{bmatrix} - G(\eta)\begin{bmatrix}
    \nabla f(U_t) \\ 0
\end{bmatrix} + \Gamma(\eta) Z_t,\]\normalsize
where $Z_t \in \sR^{2d}$ is an independent standard Gaussian, and \small
\[A(\eta) = \begin{bmatrix}
    \vI_d & \tfrac{1}{\gamma}(1-e^{-\gamma \eta})\vI_d\\ 0 &
e^{-\gamma \eta}\vI_d
\end{bmatrix}, \text{   } G(\eta) = \begin{bmatrix} \tfrac{1}{\gamma}(\eta-\tfrac{1}{\gamma}(1-e^{-\gamma \eta}))\vI_d & 0\\ \tfrac{1}{\gamma}(1-e^{-\gamma \eta})\vI_d & 0 \end{bmatrix}, \]
\[\Gamma(\eta)^{2} := \begin{bmatrix} \frac{2}{\gamma}\left(\eta - \frac{2}{\gamma}(1-e^{-\gamma \eta}) + \frac{1}{2\gamma}(1-e^{-2\gamma \eta})\right) \vI_d & \tfrac{1}{\gamma}(1-2e^{-\gamma \eta} + e^{-2\gamma \eta})\vI_d \\ \tfrac{1}{\gamma}(1-2e^{-\gamma \eta}+ e^{-2\gamma \eta})\vI_d & (1-e^{-2\gamma \eta})\vI_d\end{bmatrix}. \] \normalsize This is the Euler-Maruyama discretization of the underdamped Langevin dynamics: 
\small
\begin{equation}\label{eq:uld} dU_t = V_t dt, \text{    } dV_t = -\gamma V_t -\nabla f(U_t)dt + \sqrt 2dB_t.
\end{equation}
\normalsize
The stationary distribution of these dynamics is $\pi(U,V) \propto \exp(-f(U)-\frac{1}{2}||V||^2).$ 
\citep{eberle2019couplings,dalalyan2020sampling}
\subsection{Poisson Midpoint Method}\label{plmc_definitions}
The Poisson midpoint method is a discrete variant of the randomized midpoint method introduced by \citet{shen2019randomizedmidpointmethodlogconcave}. The iterates of PLMC run in batches of size $k;$ and can be interpreted as a stochastic approximation of Langevin Monte-Carlo, with step-size $\eta/k$. Let $t$ and $i$ be integers, with $t\geq 0$ and $0\leq i\leq k-1.$ 

To emphasize the comparison with PLMC, we adopt the following notation for \textbf{overdamped LMC:} 
\small
\[X_{t,i+1} = X_{t,i} - \frac{\eta}{k}\nabla f(X_{t,i}) + \sqrt{\frac{2\eta}{k}}Y_{t,i},\]
\[X_{t+1,0} = X_{t,k}.\] 
\normalsize
Here $Y_{t,i} \in \sR^d$ denote independent standard Gaussians. Note that this is OLMC with step-size $\eta/k,$ grouped into batches of size $k$. Now let $Z_{t,i} \in \sR^d$ be independent standard Gaussians, and $H_{t,i}$ be independent Bernoulli random variables with parameter $1/k.$

\textbf{Overdamped PLMC} is defined by the following recursions:
\small
\[\tilde X_{t,i}^+ = \tilde X_{t,0} - \frac{\eta i}{k}\nabla f(\tilde X_{t,0}) + \sum_{j=0}^i\sqrt{\frac{2\eta}{k}}Z_{t,j}\]
\[\tilde X_{t,i+1} = \tilde X_{t,i} - \frac{\eta}{k}\nabla f(\tilde X_{t,0}) + \eta H_{t,i}(\nabla f(\tilde X_{t,0}) - \nabla f(\tilde X_{t,i}^+)) + \sqrt{\frac{2\eta}{k}}Z_{t,i}\]
\[\tilde X_{t+1,0} = \tilde X_{t,k}\]
\normalsize
\begin{remark}\label{plmc_explanation}
    The iterates $\tilde X_{t,i}^+$ denote midpoints. They are defined the same way as in \citet{shen2019randomizedmidpointmethodlogconcave}. The correction term $\eta H_{t,i}(\nabla f(\tilde X_{t,0}) - \nabla f(\tilde X_{t,i}^+))$ decides whether we use the gradient evaluated at our midpoint. In expectation over $H_{t,i},$ the drift term is $\ebk \nabla f(\txtki^+).$ However, we only need to evaluate $\nabla f(\txtki^+)$ when $H_{t,i} = 1.$ With $N_t = \sum_{i=0}^{k-1}H_{t,i}$ we have $\E N_t = 1.$ This means we need an expected number of 2 gradient calls to $f$ per batch including $\nabla f(\xt).$ This facilitates an efficient implementation of PLMC where $\tilde X_{t+1,0}$ can be computed directly from $\tilde X_{t,0},$ with $\tilde {\mathcal O}(1)$ gradient calls and an arithmetic complexity of $\tilde {\mathcal O}(d).$ This relies on the properties of jointly Gaussian random variables, and is detailed in \cite[Section 2.2]{kandasamy2024poissonmidpointmethodlangevin}. This is explicated to the case of overdamped PLMC in Algorithm \ref{efficient_implementation_overdamped_plmc}. \footnote{The original paper contains a typo, which has been rectified in our exposition.}
\end{remark}

\begin{algorithm}
\caption{Efficient Implementation of Overdamped PLMC Step.}\label{efficient_implementation_overdamped_plmc}
\textbf{Step 1.} Generate $\sI_t = \{i_1,\dots,i_{N_t}\}$ such that $H_{t,i} = 1$ if and only if $i \in \sI_t,$ and $i_1<\dots< i_{N_t}$ \;

\textbf{Step 2.} $m_{t,0} \leftarrow 0$, $Z_{t,n} \sim \mathcal{N}(0,\mathbf{I})$ i.i.d. $i_0 \leftarrow 0$, $i_{N_t+1} \leftarrow k-1$. For $1 \leq n \leq N_t+1$:
\small
\[m_{t,n} \leftarrow m_{t,n-1} + \sqrt{\frac{2\eta(i_n-i_{n-1})}{k}}Z_{t,n},\]
\normalsize

\textbf{Step 3.} For $1 \leq n \leq N_t,$
\small
$$\tilde X_{t,i_n}^+ \leftarrow \tilde X_{t,0} - \frac{\eta i_n}{k}\nabla f(\tilde X_{t,0}) + m_{t,n}.$$ 
\normalsize

\textbf{Step 4.}
\small
$$\Delta_t \leftarrow \ebk\sum_{n=1}^{N_t}(\nabla f(\xt) - \nabla f(\tilde X_{t,i_n}^+))$$
\normalsize

\textbf{Step 5.} 
\small 
$$\tilde X_{t+1,0}  \leftarrow \tilde X_{t,0} - \ebk \nabla f(\xt) + \Delta_t + m_{t,N_t+1}$$
\normalsize
\end{algorithm}

\textbf{Underdamped PLMC} is defined in a similar manner, by the following recursions: 
\small
\[\begin{bmatrix}
    \tilde U_{t,i}^+ \\ \tilde V_{t,i}^+
\end{bmatrix} = A\Big(\frac{\eta i}{k}\Big)\begin{bmatrix}
    \tilde U_{t,0} \\ \tilde V_{t,0}
\end{bmatrix} - G\Big(\frac{\eta i}{k}\Big)\begin{bmatrix}
    \nabla f(U_{t,0}) \\ 0
\end{bmatrix} + \sum_{j=0}^{i-1}A\Big(\frac{\eta(i-1-j)}{k}\Big)\Gamma\Big(\frac{\eta i}{k}\Big) Z_{t,i}\] 

\begin{align*}
\begin{bmatrix}
    \tilde U_{t,i+1} \\ \tilde V_{t,i+1}
\end{bmatrix} = A\Big(\frac{\eta}{k}\Big) \begin{bmatrix}
    \tilde U_{t,i} \\ \tilde V_{t,i}
\end{bmatrix} & - G\Big(\frac{\eta}{k}\Big)\begin{bmatrix}
    \nabla f(\tilde U_{t,0}) \\ 0
\end{bmatrix} + \Gamma\Big(\frac{\eta}{k}\Big) Z_{t,i} + k H_{t,i}\cdot G\Big(\frac{\eta}{k}\Big)
\begin{bmatrix}
    \nabla f(\tilde U_{t,0}) - \nabla f(\tilde U_{t,i}^+)\\ 0
\end{bmatrix}
\end{align*}

\[\begin{bmatrix}
\tilde U_{t+1,0} \\ \tilde V_{t+1,0}
\end{bmatrix} = \begin{bmatrix}
    \tilde U_{t,k} \\ \tilde V_{t,k}
\end{bmatrix}\]
\normalsize
With $A,G$ and $\Gamma$ as defined in \ref{ulmc_definition}, and $Z_{t,i}\in \sR^d$ being independent standard Gaussians. 
\begin{remark}
    As in the overdamped case, $\tilde U_{t,i}^+$ and $\tilde V_{t,i}^+$ denote midpoints, and the outcome of the Bernoulli decides whether we evaluate the gradient at the midpoint. We note that the comments on complexity in Remark \ref{plmc_explanation} are also valid in the underdamped case. An efficient implementation of underdamped PLMC is given in Algorithm \ref{efficient_implementation_underdamped_plmc}. 
\end{remark}
We adopt the following notation for \textbf{underdamped LMC}, to emphasize the comparison to PLMC. 
\small
\[\begin{bmatrix}
    U_{t,i+1} \\ V_{t,i+1}
\end{bmatrix} = A\Big(\frac{\eta}{k}\Big) \begin{bmatrix}
    U_{t,i} \\ V_{t,i}
\end{bmatrix} - G\Big(\frac{\eta}{k}\Big)\begin{bmatrix}
    \nabla f(U_{t,i}) \\ 0
\end{bmatrix} + \Gamma\Big(\frac{\eta}{k}\Big)Y_{t,i},\]
\[\begin{bmatrix}
    U_{t+1,0} \\ V_{t+1,0}
\end{bmatrix} = \begin{bmatrix}
    U_{t,k} \\ V_{t,k}
\end{bmatrix},\]
\normalsize
where $Y_{t,i} \in \sR^{2d}$ is an independent standard Gaussian. Note that this is underdamped LMC with step-size $\eta/k,$ grouped into batches of size $k.$ 
\subsection{Prior work}
\label{sec:prior_work}
\begin{table}[h]
  \caption{Complexity for discretized OLD. In case of LSI, $\kappa = L\times$\text{ LSI constant}. The scaling of $\mathcal W_2^2$ is different from~\eqref{eq:end_condition} to compare with TV and KL bounds.}
  \label{tab:comparison-overdamped}
  \resizebox{0.99\columnwidth}{!}{
  \centering
  \scriptsize 
  \begin{tabular}{|l|l|l|l|}
    \toprule
    \multicolumn{4}{|c|}{Overdamped Langevin Dynamics}               \\
    \midrule
    Algorithm    & Assumption  & Metric  & Oracle Complexity \\
    \midrule 
    LMC \cite{durmus2018analysislangevinmontecarlo} & Strongly Log-Concave & $\mathcal W_2^2 \leq \frac{\epsilon^2}{\alpha} $ & $\frac{\kappa d}{\epsilon^2}$ \\
    \midrule
    RLMC \cite{shen2019randomizedmidpointmethodlogconcave, yu2023langevinmontecarlostrongly} & Strongly Log-Concave & $\mathcal W_2^2 \leq \frac{\epsilon^2}{\alpha} $ & $\frac{\kappa \sqrt{d}}{\epsilon} + \frac{\kappa^{4/3}d^{1/3}}{\epsilon^{2/3}}$  \\
    \midrule
    RLMC \cite{altschuler2024shiftedcompositioniiilocal} & Strongly Log-Concave & $\mathsf{KL} \leq \epsilon^2$  & $\frac{\kappa \sqrt{d}}{\epsilon}$    \\
    \midrule
    RLMC \cite{altschuler2024shiftedcompositioniiilocal} & LSI & $\mathsf{KL} \leq \epsilon^2$  & $\frac{\kappa^{3/2} \sqrt{d}}{\epsilon}$    \\
    \midrule
    PLMC (Ours) & Strongly Log-Concave &  $\mathcal W_2^2 \leq \frac{\epsilon^2}{\alpha}$ & $\frac{\kappa^{4/3}d^{1/3}+\kappa d^{2/3}}{\epsilon^{2/3}}$    \\
    \bottomrule
    \end{tabular}
  }
\end{table}

\begin{table}[h]
  \caption{Complexity for discretized ULD. In case of LSI, $\kappa = L\times$\text{ LSI constant}. The scaling of $\mathcal W_2^2$ is different from~\eqref{eq:end_condition} to compare with TV and KL bounds, and $p \in \mathbb N$ is arbitrary.}
  \label{tab:comparison-underdamped}
  \resizebox{0.99\columnwidth}{!}{
  \centering
  \scriptsize
  \begin{tabular}{|l|l|l|l|}
    \toprule
    \multicolumn{4}{|c|}{Underdamped Langevin Dynamics}               \\
    \midrule
    Algorithm    & Assumption  & Metric  & Oracle Complexity \\
    \midrule 
    LMC \cite{dalalyan2020sampling} & Strongly Log-Concave & $\mathcal W_2^2 \leq \frac{\epsilon^2}{\alpha} $ & $\frac{\kappa^{3/2}\sqrt{d}}{\epsilon}$ \\
    \midrule
    RLMC \cite{shen2019randomizedmidpointmethodlogconcave, yu2023langevinmontecarlostrongly} & Strongly Log-Concave & $\mathcal W_2^2 \leq \frac{\epsilon^2}{\alpha} $ & $\frac{\kappa d^{1/3}}{\epsilon^{2/3}} + \frac{\kappa^{7/6}d^{1/6}}{\epsilon^{1/3}}$  \\
    \midrule
    PLMC \cite{kandasamy2024poissonmidpointmethodlangevin} & LSI &  $\mathsf{TV} \leq \epsilon$ & $\frac{\kappa^{\frac{17}{12}}d^{\frac{5}{12}}}{\sqrt{\epsilon}}$  \\
    \midrule
    PLMC (Ours) & Strongly Log-Concave &  $\mathcal W_2^2 \leq \frac{\epsilon^2}{\alpha}$ & $\frac{\kappa^{7/6}d^{1/3}}{\epsilon^{1/3}}+ \frac{\kappa^{\tfrac{11p+6}{8p+6}}d^{\tfrac{3p+2}{8p+6}}}{\epsilon^{\tfrac{p+2}{4p+3}}}$ \\
    \bottomrule
  \end{tabular}
  }
\end{table}

Recent works have focused on the rigorous theoretical analysis of classical and popular MCMC algorithms to establish complexity bounds and theoretical limits. The prototypical case of Overdamped LMC has been studied when the target $\pi$ is strongly log-concave
and more generally when $\pi$ satisfies isoperimetric inequalities \citep{dalalyan2017theoretical,durmus2017unadjusted,durmus2018analysislangevinmontecarlo,vempala2022rapidconvergenceunadjustedlangevin,erdogdu2021-tail-growth,mou2022improved,sinho-non-log-concave-lmc}. Underdamped LMC has been considered as a faster alternative. This case too has been well studied when $\pi$ is strongly log-concave and when $\pi$ satisfies isoperimetric inequalities \citep{cheng2018-underdamped-lmc,dalalyan2020sampling,ganesh2020faster, ma2021there,zhang2023improveddiscretizationanalysisunderdamped,altschuler2025shiftedcompositionivunderdamped}

LMC is the Euler-Maruyama discretization of continuous time Langevin Dynamics, which can lead to sub-optimal convergence due to statistical bias in the approximation. Thus, Shen \& Lee~\cite{shen2019randomizedmidpointmethodlogconcave} introduced the randomized midpoint method for LMC (RLMC) which reduces the bias in the approximation by introducing a randomized estimator at the cost of higher variance. RLMC does not involve higher order derivatives of $\nabla f$ as in Runge-Kutta schemes for SDEs \citep{kloeden1992stochastic} - allowing its use for generative modeling with denoising diffusion models \citep{kandasamy2024poissonmidpointmethodlangevin}. This leads to improvement in the convergence rates compared to LMC under log concavity (see Tables~\ref{tab:comparison-overdamped} and \ref{tab:comparison-underdamped}). The works \cite{he2021ergodicitybiasasymptoticnormality,yu2023langevinmontecarlostrongly,altschuler2024shiftedcompositioniiilocal,altschuler2025shiftedcompositionivunderdamped} extend the bounds in \cite{shen2019randomizedmidpointmethodlogconcave}.

Kandasamy \& Nagaraj~\cite{kandasamy2024poissonmidpointmethodlangevin} introduced the Poisson midpoint method for LMC (PLMC), a variant of RLMC, which converges whenever LMC converges, allowing analysis beyond log-concavity. PLMC gives a quadratic improvement in complexity in terms of $\epsilon$ when $\pi$ satisfies Logarithmic Sobolev Inequalities (LSI). Our work shows a cubic improvement for PLMC under strong log-concavity.

The literature on MCMC considers various notions of convergence including KL-divergence, TV and $\mathcal{W}_2$. In the case when $\pi$ is strongly log-concave, the Otto-Villani Theorem \citep{otto2000generalization} shows that $\KL(\mu||\pi) \leq \epsilon^2 \implies \W{\mu}{\pi} \lesssim \frac{\epsilon^2}{\alpha}$ and the Pinsker's inequality shows that $\KL(\mu||\pi) \leq \epsilon^2 \implies \mathsf{TV}(\mu,\pi) \lesssim \epsilon$. The condition of $\pi$ satisfying LSI is more general than strong log-concavity of the target. We refer to Tables~\ref{tab:comparison-overdamped} and \ref{tab:comparison-underdamped} for a detailed comparison of the results.

\section{Results}
We now present our main results. The following Theorem on the convergence of overdamped PLMC is proven in Section \ref{olmc_main_proof}. 
\begin{theorem}\label{OLMCmain}
    Let $\tilde X_{t,i}$ denote the iterates of Overdamped PLMC, and $X_{t,i}$ the iterates of Overdamped LMC with stepsize $\eta/k,$ as defined in Section \ref{plmc_definitions}. Assume $\eta L \leq 1/8,$ and Assumption \ref{well_conditioned_F}. Then there exist absolute constants $c_1$ and $c_2$ such that
    \small
    \begin{align*}
        \W{\law(\xt)}{\law(X_{t,0})} \lesssim & (\eta^6 L^4 dk + \eta^4L^2 + \tfrac{\eta^5 L^4}{\alpha})\cdot(Ldt + \tfrac{1}{\eta}\e(f(X_{0,0})-f(X_{t,0})) \\ & + \tfrac{\eta^3L^4d}{\alpha^2}+ \tfrac{\eta^4 L^4 d^2}{\alpha}+\exp(c_1d-(c_2 \eta^2 L^2 k)^{-1})\cdot \tfrac{\eta^2 L^2d}{\alpha}.
    \end{align*}
\normalsize
\end{theorem}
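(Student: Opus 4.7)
The plan is to bound $\e\|X_{tK} - \tilde X_{tK}\|^2$ under a synchronous coupling that identifies $Y_{tK+j} = Z_{tK+j}$ and starts both chains from the same point. Unrolling one full batch of $K$ PLMC steps at once, I would decompose the per-batch discrepancy into three pieces: (i) a linear contraction piece governed by $(\vI_d - \tfrac{\eta}{K}\nabla^2 F)$-type factors from the shared gradient drift, (ii) a bias piece of the form $\nabla F(\hat{\tilde X}_{tK+i}) - \nabla F(X_{tK+i})$ arising because the PLMC drift is effectively evaluated at the cheap interpolation rather than the true LMC iterate, and (iii) a zero-mean but large-conditional-variance piece $K(H_{i,t}-1/K)\bigl(\nabla F(\hat{\tilde X}_{tK+i}) - \nabla F(\tilde X_{tK})\bigr)$ produced by the Bernoulli randomization. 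Piece (ii) is first order in $\eta/K$ and, combined with $L$-smoothness, can be absorbed inductively into $\e\|X_{tK}-\tilde X_{tK}\|^2$.

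The critical step is handling (iii) without paying a factor of $K$ that would wash out the claimed $\epsilon$-speedup. For this I would invoke the Zhai-style sharp $W_2$ estimate for Gaussian-plus-perturbation that the introduction emphasizes. Conditional on the good event $S = \{5\beta^2 \le 1\}$, where $\beta$ measures the accumulated batch perturbation relative to the aggregated Gaussian noise $\sqrt{2\eta/K}\sum_j Y_{tK+j}$ (of covariance $2\eta\vI_d$), the endpoint of the batch can be written as $G + W$, and the Zhai bound converts the $L^2$ size of $W$ into a $W_2^2$ cost that is much smaller than a naive summation of the $K$ Bernoulli perturbations would give. Plugging in the size of $W$ from $L$-smoothness yields the prefactors $\eta^6 L^4 dK + \eta^4 L^2 + \eta^5 L^4/\alpha$. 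On the complementary event $\{5\beta^2>1\}\cup S^c$, I would use a crude a priori $W_2^2$ bound of order $\eta^2 L^2 d/\alpha$ (justified by standard equilibrium estimates for the Langevin trajectories) multiplied by the sub-Gaussian probability of the bad event, which gives precisely the exponential factor $\exp(c_1 d - (c_2\eta^2 L^2 K)^{-1})$ through Gaussian concentration of the per-batch noise against the drift-variation scale.

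To globalize the per-batch comparison, I would set up a recursion on $\e\|X_{tK}-\tilde X_{tK}\|^2$ using the $\alpha$-strong-convexity contraction $(1-\alpha\eta/K)^K \le e^{-\alpha\eta}$ on the linear part, with inhomogeneous terms expressed in terms of $\e\|\nabla F(X_{tK+i})\|^2$ and a few higher moments. Summing over $t$ and invoking the Langevin descent lemma, the cumulative gradient-norm-squared is bounded by $\tfrac{1}{\eta}\e(F(X_0) - F(X_{tK})) + O(Ldt)$, which is precisely the second factor multiplying the leading prefactor in the theorem. The remaining lower-order bias contributions (from (ii) after $L$-smoothness and from moments of the Brownian increments within a batch) collapse into the additive $\tfrac{\eta^3 L^4 d}{\alpha^2} + \tfrac{\eta^4 L^4 d^2}{\alpha}$ remainder.

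The main obstacle will be the bookkeeping around the Zhai-style estimate: verifying that the per-batch perturbation admits a clean $G + W$ form with $W$ controlled on $S$ by the correct powers of $\eta$, $L$, $K$, and that the bad event $\{5\beta^2 > 1\} \cup S^c$ has exponentially small probability with the exact scaling asserted. A related technical complication is that the perturbation depends on the same Gaussian noise through the cheap interpolation, so the independent-perturbation form needed for Zhai's bound becomes available only after a careful conditioning argument that must remain compatible with the strong-convexity contraction propagating errors across successive batches.
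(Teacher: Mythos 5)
Your plan assembles the right ingredients (strong-convexity contraction, a Zhai-type $W_2$ estimate for the Bernoulli midpoint perturbation, Gaussian concentration for a bad event, and a descent-lemma bound on the cumulative gradient norms), but the mechanism you describe for the key step does not go through as stated. First, you cannot simultaneously take the synchronous coupling $Y_{tK+j}=Z_{tK+j}$ and reap the Zhai-type gain: under the identity coupling the noise-mismatch cost is exactly $\e\|V\|^2=\nu$, whereas the whole point of the sharp analysis is to pay only $O(\nu^2)\ll\nu$, which requires a genuinely different, conditionally $W_2$-optimal coupling of the Gaussians. The paper instead couples $X_{tK+i}$ and $\tilde X_{tK+i}$ $W_2$-optimally, conditions on $(\tilde X_{tK},\tilde X_{tK+i},\hat{\tilde X}_{tK+i},X_{tK+i})$, and then couples $(Y_{tK+i},H_{i,t})$ with $Z_{tK+i}$ step by step via Lemma~\ref{new_CLT} and Corollary~\ref{conditionalCoupling}, switching between the identity coupling and the Zhai coupling according to an auxiliary event $S$ chosen so that the error is always of the form $\min(\nu,\nu^2)$ up to the exponentially rare event $\{5\beta^2>1\}$. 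Second, your per-batch $G+W$ decomposition destroys exactly the two structural facts that make Lemma~\ref{new_CLT} applicable with the right constants: after the per-step conditioning, the perturbation $V=\sqrt{\eta K/2}\,(H_{i,t}-1/K)(\nabla F(\tilde X_{tK})-\nabla F(\hat{\tilde X}_{tK+i}))$ is supported on a one-dimensional subspace and is independent of the fresh Gaussian, while the batch-aggregated perturbation is neither (it spans many directions and is built from the same in-batch Gaussians through the cheap interpolation). The ``careful conditioning argument'' you defer is therefore precisely the missing idea; without it the batch-level application of the Zhai estimate either fails outright or incurs an extra dimension-dependent factor.

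Two smaller issues. The bad event cannot be handled by ``probability of the bad event times an a priori $W_2^2$ bound'': a Wasserstein cost restricted to an event requires a second-moment bound of the coupled difference on that event, which the paper obtains by Cauchy--Schwarz against fourth moments of the in-batch noise $M_{t,K}$ together with Gaussian concentration (Lemma~\ref{noise_bound}); your shortcut targets the same term $\exp(c_1 d-(c_2\eta^2L^2K)^{-1})\cdot\eta^2L^2 d/\alpha$ but is not justified as written. Also, the gradient sums entering the recursion are those of the PLMC iterates $\tilde X_{tK}$ at batch starting points, not of the LMC iterates, so the descent-lemma argument must be run for the PLMC chain itself, absorbing the additional Bernoulli midpoint terms (Lemma~\ref{OLMCgradient}); this is routine but not identical to the standard Langevin descent lemma you invoke.
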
 
The above theorem shows that $\tilde X_{t,0}$ is close to $X_{t,0}$ in Wasserstein-2 distance. However, running $tk$ iterations of PLMC requires only $\mathcal O(t)$ gradient calls, as compared to $tk$ gradient calls for LMC. In the following corollary, we combine the Theorem~\ref{OLMCmain} with the convergence results for $X_{t,i}$ to $\pi$ given in \cite{durmus2018analysislangevinmontecarlo} to deduce the convergence of $\tilde X_{t,i}$. We refer to Section~\ref{sec:OLMCcomplexityproof} for its proof.

\begin{corollary}\label{OLMCcomplexity}
Let $\tilde{X}_{t,0}$ be the iterates of Overdamped PLMC as in Theorem~\ref{OLMCmain}. Let $x^*$ be the unique minimizer of $f$, and $\epsilon>0.$ Assume: 
\begin{enumerate}
\item The conditions from Theorem \ref{OLMCmain} hold.
 \item $\tilde{X}_{0,0}$ satisfies $\e[f(\tilde{X}_{0,0})-f(x^*)] \leq C_f\kappa d$ for some $C_f > 0$. 
\end{enumerate}

Then there exist constants $C_1, C_2 > 0$ depending only on $C_f$, $\log(\frac{W_2(X_{0,0},\pi)\sqrt \alpha}{\epsilon \sqrt d})$ and $\log (1/\epsilon),$ polynomially, such that if $\eta = C_1\min(\frac{\alpha^{1/3}\epsilon^{2/3}}{L^{4/3}}, \frac{\epsilon^{2/3}}{d^{1/3}L}),$ $k \asymp \max(\frac{\eta L}{\epsilon^2},1)$  and $N = C_2 \big[\frac{\kappa^{4/3}+\kappa d^{1/3}}{\epsilon^{2/3}}\big]$. Then, 
\small
    $$\W{\law(\tilde X_{N,0})}{\pi} \leq \epsilon^2 d/\alpha$$
\normalsize
\end{corollary}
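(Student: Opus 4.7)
The plan is to combine Theorem~\ref{OLMCmain} (which bounds $W_2(\law(\tilde X_{NK}),\law(X_{NK}))$) with known contraction bounds for exact Overdamped LMC under strong log-concavity, via the triangle inequality
\[
W_2(\law(\tilde X_{NK}),\pi)\;\le\;W_2(\law(\tilde X_{NK}),\law(X_{NK}))\;+\;W_2(\law(X_{NK}),\pi).
\]
For the second term I would invoke the standard Durmus--Moulines-type bound from \cite{durmus2018analysislangevinmontecarlo}: running LMC with stepsize $\eta/K$ for $NK$ steps gives
\[
W_2^2(\law(X_{NK}),\pi)\;\lesssim\;e^{-2\alpha N\eta}W_2^2(\law(X_0),\pi)\;+\;\tfrac{\eta L d}{K\alpha}.
\]
Plugging in $K=4\eta L/\epsilon^2$ forces the bias term to be $\lesssim \epsilon^2 d/\alpha$, and the mixing term is driven below $\epsilon^2 d/\alpha$ once $N\eta\gtrsim \alpha^{-1}\log(\alpha W_2^2(X_0,\pi)/(\epsilon^2 d))$, which is exactly absorbed into the $C_2$ of the statement (hence its polylog dependence on $W_2(X_0,\pi)\sqrt\alpha/(\epsilon\sqrt d)$).

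Next I would verify that each of the five summands in Theorem~\ref{OLMCmain} (after $t=N$) is $\lesssim \epsilon^2 d/\alpha$ under the prescribed $\eta$ and $K$. Two inputs are needed: (a) the one-step potential drop $\e[F(X_0)-F(X_{NK})]\le \e[F(X_0)-F(x^*)]\le C_F\kappa d$, which follows from the hypothesis and the fact that $F(X_{NK})\ge F(x^*)$; and (b) the budget $N\eta\lesssim \mathrm{polylog}/\alpha$, so that the $Ldt=LdN\eta$ factor multiplying the first prefactor is $\lesssim Ld/\alpha\cdot\mathrm{polylog}=\kappa d\cdot\mathrm{polylog}$. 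Term by term, requiring the product $(\eta^6L^4dK+\eta^4L^2+\eta^5L^4/\alpha)\cdot \kappa d$, together with $\eta^3L^4d/\alpha^2$ and $\eta^4L^4d^2/\alpha$, to be $\lesssim \epsilon^2 d/\alpha$ yields two competing constraints on $\eta$: one of the form $\eta\lesssim \alpha^{1/3}\epsilon^{2/3}/L^{4/3}$ (driven by the $\eta$-only terms) and one of the form $\eta\lesssim \epsilon^{2/3}/(d^{1/3}L)$ (driven by the terms carrying an extra factor of $d$). Taking $\eta$ to be the minimum of the two (times a small absolute constant $C_1$) makes the whole first term of the triangle inequality $\lesssim \epsilon^2 d/\alpha$.

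The exponential remainder $\exp(c_1 d-(c_2\eta^2L^2K)^{-1})\cdot \eta^2L^2d/\alpha$ needs separate attention: with $K=4\eta L/\epsilon^2$, $\eta^2L^2K=4\eta^3 L^3/\epsilon^2$, so under the regime $\eta\lesssim \epsilon^{2/3}/(d^{1/3}L)$ one gets $(\eta^2L^2K)^{-1}\gtrsim d/\epsilon^{\Theta(1)}$, dominating $c_1 d$ and driving the exponential below any polynomial in $\epsilon$; this is precisely the second case of the minimum in the definition of $\eta$. The final gradient-call count is $N$ (one batch of PLMC uses $O(1)$ gradient evaluations in expectation per batch of $K$ inner steps), and combining the two lower bounds on $N$ obtained from the two regimes of $\eta$ gives $N\lesssim (\kappa^{4/3}+\kappa d^{1/3})/\epsilon^{2/3}$ up to polylog factors, matching the stated choice of $C_2$.

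The main obstacle I expect is the bookkeeping required to show that \emph{all} five terms in Theorem~\ref{OLMCmain}, evaluated at $t=N$ with $N\eta=\tilde O(1/\alpha)$, fit under the single $W_2^2\le \epsilon^2 d/\alpha$ budget simultaneously; in particular, handling the prefactor $Ldt+\tfrac{1}{\eta}\e[F(X_0)-F(X_{tK})]$ requires the log-concave potential-decrease estimate and care that the two-term minimum in the choice of $\eta$ is tight enough to kill both the polynomial terms and the exponential remainder without forcing $N$ beyond $\tilde O((\kappa^{4/3}+\kappa d^{1/3})/\epsilon^{2/3})$. The smallness hypothesis $\epsilon\lesssim \min(1,\kappa^{-1/4},d^{-1/4})$ is exactly what is needed to ensure the chosen $\eta$ satisfies $\eta L\le 1/8$ (the hypothesis of Theorem~\ref{OLMCmain}), closing the loop.
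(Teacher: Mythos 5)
Your proposal is correct and follows essentially the same route as the paper's proof: triangle inequality, the Durmus--Moulines convergence bound for LMC run at stepsize $\eta/K=\epsilon^2/(4L)$, and Theorem~\ref{OLMCmain} combined with the potential-drop bound $\e[F(\tilde X_0)-F(\tilde X_{NK})]\le \e[F(\tilde X_0)-F(x^*)]\le C_F\kappa d$, with the exponential remainder killed by the $\epsilon^{2/3}/(d^{1/3}L)$ branch of the stepsize. Two small bookkeeping points: in Theorem~\ref{OLMCmain} the variable $t$ counts outer batches, so at $t=N$ the factor is $LdN\asymp \kappa d\,\mathrm{polylog}/\eta$ rather than $LdN\eta$ (and the potential-drop term likewise enters as $\tfrac{1}{\eta}\e[F(\tilde X_0)-F(\tilde X_{NK})]\lesssim \kappa d/\eta$), an extra $1/\eta$ that the stated choice of $\eta$ still absorbs; and the assumption $\epsilon\lesssim\min(1,\kappa^{-1/4},d^{-1/4})$ is what the paper uses to guarantee the batch size $K=4\eta L/\epsilon^2\ge 1$, not merely $\eta L\le 1/8$ (which only needs $\epsilon\lesssim 1$).
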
 

\begin{remark}
The complexity bound for Overdamped LMC \citep{durmus2018analysislangevinmontecarlo} is $\tilde {\mathcal O}(\kappa/\epsilon^2)$ gradient calls, and that of Overdamped RLMC \citep{yu2023langevinmontecarlostrongly} is $\tilde {\mathcal O}(\frac{\kappa}{\epsilon}+\frac{\kappa^{4/3}}{\epsilon^{2/3}})$ gradient calls. To our knowledge, our method is thus the best known discretization of overdamped Langevin dynamics, in terms of $\epsilon$ dependence. Note that our assumption on the initialization is very mild - $f$ can be optimized easily using standard convex optimization algorithms.
\end{remark}

 The following Theorem, proved in Section \ref{ulmc_main_proof}, considers Underdamped Langevin Dynamics:
\begin{theorem}\label{ULMCmain}
    Let $\tilde U_{t,i}$ denote the iterates of Underdamped PLMC, and $U_{t,i}$ denote the iterates of Underdamped LMC with step-size $\eta/k,$ as defined in Section \ref{plmc_definitions}. Let $p \geq 0$ be any integer. There exists $c_0 >0$, which depends only on $p$ such that if:
    \begin{enumerate}
        \item Assumption \ref{well_conditioned_F} holds.
        \item $\gamma\eta < c_0,$ $\frac{\eta}{k} \leq \frac{c_0}{\kappa\sqrt{L}},$ and $\frac{\eta^{3p-1}t^{p-1}L^{2p}}{\gamma^{p+1}}<c_0$
        \item  $\gamma = c_{\gamma}\sqrt L$ for some constant $c_{\gamma} \geq \sqrt 2$. 
    \end{enumerate}
    \small
\begin{align*}
\text{Then,} \quad \quad    \W{\law(\tutki)}{\law(U_{t,i})} &=
    \mathcal O\Big[\tfrac{\eta^7 L^{9/2}d}{\alpha\gamma^2}t+\tfrac{\eta^8 L^4 d^2}{\gamma^2}t^2+\tfrac{\eta^{4p+4}k^{p-1}L^{2p+2}d^{p+1}}{\gamma^{2}}t^{p+1}\Big] \\ & + \e[P_\eta(||V_{0,0}||, |f(\Psi_0)-f(\Psi_t)|^+)],
\end{align*}
\normalsize
Where $\mathcal{O}$ hides constants depending only on $c_0, c_{\gamma}$. $P_\eta$ is a polynomial whose coefficients are high powers of $\eta$ and depend on $p, c_{\gamma}$, and $\Psi$ is defined as $\Psi_s := \tilde U_{s,0} + \frac{1}{\gamma}\tilde V_{s,0}.$ The complete bound is explicated in Section \ref{ULMCmain_complete}, for the sake of clarity.
\end{theorem}
The bound in Theorem \ref{ULMCmain} holds for any choice of nonnegative integer $p$. The presence of $p$ is due to the manner in which we bound a certain low probability event - see the proof of Proposition \ref{ULMC_coupling}. Similar to Corollary~\ref{OLMCcomplexity}, the following Corollary (proved in Section~\ref{sec:ULMCcomplexity_proof}) establishes complexity bounds. 
\begin{corollary}\label{ULMCcomplexity}
    Let $\tutki$ denote the iterates of Underdamped PLMC, as in Theorem \ref{ULMCmain}. Let $x^*$ be the unique minimizer of $f,$ and $p \in \sN \cup \{0\}$ be fixed. Let $k \asymp \max(\lceil\frac{\eta L}{\epsilon\sqrt\alpha}\rceil,1)$, and $\gamma = c_\gamma \sqrt L$ as in Theorem \ref{ULMCmain}. Initialize the iterates with $V_{0,0} \sim \mathcal N(0,\vI_d)$ and $\e||U_{0,0}-x^*||^{2n} \leq  c_1d^{n}/L^n$ for $n = \max(2, p+1),$ and some constant $c_1>0$ depending only on $p$. 

    Then there exist $C_3, C_4,C_5>0$ depending on $p$ and polynomially on $\log(\frac{\mathcal W_2(U_0,\pi)\sqrt \alpha}{\epsilon \sqrt d})$ and $\log(\frac{\kappa}{\epsilon}),$ such that: if $\eta = C_3\min\left(\frac{\epsilon^{1/3}}{\kappa^{1/6} d^{1/6}\sqrt L},\frac{\epsilon^{\frac{p+2}{4p+3}}}{\kappa^{\frac{3p}{8p+6}}d^{\frac{p}{4p+3}}\sqrt L}\right),$ $0<\epsilon \leq C_4\min(\kappa^{-1/2}, \kappa^{-\frac{2p-3}{2p}}d^{1/2})$ and $N = C_5 \big[\frac{\kappa^{7/6}d^{1/6}}{\epsilon^{1/3}}+\frac{\kappa^{\frac{11p+6}{8p+6}}d^{\frac{p}{4p+3}}}{\epsilon^{\frac{p+2}{4p+3}}}\big]$, we have 
    \small 
    $$\W{\law(\tilde U_{N,0})}{\pi} \leq \epsilon^2 d/\alpha\,.$$
    \normalsize
\end{corollary}
The complexity bound for Underdamped LMC is $\tilde {\mathcal O}(\kappa^{3/2}/\epsilon)$ \citep{cheng2018-underdamped-lmc}, and that of Underdamped RLMC is $\tilde {\mathcal O}(\frac{\kappa^{7/6}}{\epsilon^{1/3}}+\frac{\kappa}{\epsilon^{2/3}})$ \citep{shen2019randomizedmidpointmethodlogconcave}. 
\begin{remark}
Our assumption on the initialization is standard in the literature \citep{vempala2022rapidconvergenceunadjustedlangevin,shen2019randomizedmidpointmethodlogconcave}, and satisfied (for example) by $\mathcal N(x^*, \vI_d/L).$ 
\begin{enumerate}
\item With $p=0$, we get a complexity of $\tilde {\mathcal O}(\frac{\kappa^{7/6}d^{1/6}}{\epsilon^{1/3}}+ \frac{\kappa}{\epsilon^{2/3}}).$
\item With $p=3$, we get a complexity of $\tilde {\mathcal O}(\frac{\kappa^{13/10}d^{1/5}}{\epsilon^{1/3}}).$
\item For $p>3,$ the second term becomes lower order in $\epsilon$ and the oracle complexity satisfies $\tilde{\mathcal O}_{\kappa, d}(\frac{1}{\epsilon^{1/3}} + \frac{1}{\epsilon^{1/4 + \mathcal O(1/p)}})$.  
\end{enumerate}

\end{remark}
\begin{remark}
    The concurrent work of \cite{altschuler2025shiftedcompositionivunderdamped} claims an oracle complexity of $\tilde {\mathcal{O}}(\kappa^{5/6}d^{5/3}/\epsilon^{2/3})$ to achieve $\text{KL} \leq \epsilon^2.$ This is in the low friction regime $\gamma \asymp \sqrt\alpha,$ and for a double midpoint implementation of Underdamped RLMC. This has improved dependence in $\kappa$ as compared to prior works, but is worse in $d$ and without improvement in $\epsilon.$ 
\end{remark}
Our work improves dependence in $\epsilon$ while being worse in $d$. To our knowledge, PLMC is the best known discretization of underdamped Langevin dynamics in terms of $\epsilon,$ and is the first known algorithm to break the $\tilde {\mathcal O}(\epsilon^{-2/3})$ barrier for strongly log-concave sampling. 
\section{Intuition and Proof Idea} 
Our proof relies on the following key Lemma. This is similar to Lemma 7 of \cite{kandasamy2024poissonmidpointmethodlangevin}, which was in turn adapted from \cite{zhai2017highdimensionalcltmathcalw2distance}. The difference is that our result avoids higher order moments, making it significantly easier to apply. 
\begin{lemma}\label{new_CLT}
Let $V$ be a random vector in $\sR^d$ satisfying the following conditions: 
    \begin{enumerate}
        \item $||V|| \leq \beta$ a.s., $\e[V] = 0$, and $\e[VV^T] = \Sigma.$ 
        \item $V$ lies in a one-dimensional subspace almost surely. 
    \end{enumerate}
    Let the random vector $Z \sim \mathcal{N}(0,\vI_d),$ and independent of $V$. Let $\nu = \text{Tr}(\Sigma),$ Then,
    \small
    \[\W{\law(Z)}{\law(Z+V)} \leq \frac{11}{2}\nu^2+{\mathbf 1}_{5\beta^2 > 1} \cdot 2\nu.\]
    \normalsize
\end{lemma}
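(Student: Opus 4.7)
The plan is to exploit hypothesis~(2) to reduce the statement to a scalar problem, dispatch the large-$\beta$ regime by the trivial coupling, and in the small-$\beta$ regime establish the $\nu^2$ scaling via a sharp expansion of the density ratio $p/\phi$ combined with Talagrand's $T_2$ transport inequality. By hypothesis~(2), $V$ is supported almost surely on a deterministic one-dimensional subspace, and by rotational invariance of $\law(Z) = \mathcal{N}(0,\vI_d)$ I may take this subspace to be $\mathrm{span}(e_1)$. Writing $V = U e_1$ with $|U|\leq\beta$, $\e[U]=0$, $\e[U^2]=\nu$, and noting that $Z+V$ agrees with $Z$ in coordinates $2,\ldots,d$, coupling those coordinates identically gives
\[
\W{\law(Z)}{\law(Z+V)} \;=\; \W{\mathcal{N}(0,1)}{\law(Z_1+U)}.
\]
If $5\beta^2 > 1$, the trivial coupling $(Z_1,Z_1+U)$ already yields $W_2^2 \leq \e[U^2] = \nu \leq 2\nu$, establishing the bound in this regime (the $\tfrac{11}{2}\nu^2$ term being nonnegative).

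The substantive case is $5\beta^2 \leq 1$, in which I must show $W_2^2 \leq \tfrac{11}{2}\nu^2$. Let $\phi$ denote the standard Gaussian density and $p(x):=\e[\phi(x-U)]$ the density of $Z_1 + U$, so that the density ratio has the closed form
\[
g(x) \;:=\; \frac{p(x)}{\phi(x)} \;=\; \e\!\left[e^{Ux - U^2/2}\right].
\]
Expanding the exponential and using $\e[U]=0$, $\e[U^2]=\nu$ produces
\[
g(x) \;=\; 1 \;+\; \tfrac{\nu}{2}(x^2 - 1) \;+\; R(x),
\]
where $R(x)$ collects contributions from $\e[U^k]$ for $k \geq 3$ and from the squared linear term. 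Using $|U|\leq\beta$ one has $|\e[U^k]| \leq \beta^{k-2}\nu$ for $k\geq 2$, so the series defining $R$ converges geometrically by virtue of $\beta^2 \leq 1/5$, and $R$ is controlled pointwise by $\nu^2$ times a polynomial in $x$. I would then invoke Talagrand's $T_2$ inequality
\[
W_2^2(p,\phi) \;\leq\; 2\,\KL(p\|\phi) \;=\; 2\,\e_Z\!\bigl[g(Z)\log g(Z)\bigr],
\]
apply the pointwise expansion $(1+\epsilon)\log(1+\epsilon) \leq \epsilon + \tfrac{1}{2}\epsilon^2 + O(\epsilon^3)$, and integrate term-by-term against $\phi$ via Hermite orthogonality (notably $\e_Z[(Z^2-1)^2] = 2$, which handles the leading piece). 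The leading $\tfrac{\nu}{2}(Z^2-1)$ contribution gives $\tfrac{\nu^2}{2}$ towards $W_2^2$, and the remainder contributions are absorbed by a further universal multiple of $\nu^2$; careful bookkeeping of all constants produces the prefactor $\tfrac{11}{2}$.

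The main obstacle is not conceptual but arithmetic. Earlier Zhai-style analyses \cite{zhai2017highdimensionalcltmathcalw2distance,kandasamy2024poissonmidpointmethodlangevin} naturally produce remainders of the form $\beta^k \nu$ that do not fold into a pure $\nu^2$ bound; the sharpening in the present lemma, flagged in the paragraph preceding it as "avoiding higher powers of $\nu,\beta$", consists precisely in converting each higher moment $\e[U^k]$ into $\beta^{k-2}\nu$ via $|U|\leq\beta$ and then using $5\beta^2 \leq 1$ to sum the tail geometrically, so that every single remainder term is pure $\nu^2$ with a universally-bounded constant. This is the delicate step, and it is what allows the $W_2^2$ bound to feed back into the coupling recursions for Overdamped and Underdamped PLMC (Theorems~\ref{OLMCmain} and~\ref{ULMCmain}) with the sharp $\nu^2$ scaling that ultimately drives the claimed cubic improvement in $\epsilon$.
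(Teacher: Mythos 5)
Your argument is correct in substance, but it takes a genuinely different route from the paper's. The paper never touches densities: it inserts an intermediate inflated Gaussian $\sqrt{\vI_d+\Sigma}W$, glues the optimal couplings of $(\law(Z),\law(\sqrt{\vI_d+\Sigma}W))$ and $(\law(Z+V),\law(\sqrt{\vI_d+\Sigma}W))$ conditionally on $W$, pays a factor $2$ via $(a+b)^2\le 2(a^2+b^2)$, evaluates $\W{\sqrt{\vI_d+\Sigma}W}{Z}$ in closed form ($\le\nu^2/4$), and controls $\W{\sqrt{\vI_d+\Sigma}W}{Z+V}$ by Lemma~\ref{specializedCLT}, i.e.\ the one-dimensional reduction followed by Zhai's Lemma~\ref{Zhai} when $5\beta^2\le 1$ and the identical-noise coupling when $5\beta^2>1$. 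You instead compare $\law(Z+V)$ to $\law(Z)$ directly: same reduction to one dimension and same trivial coupling for $5\beta^2>1$, but in the small-$\beta$ regime you pass through the likelihood ratio $g(x)=\e[e^{Ux-U^2/2}]$, Talagrand's $T_2$ inequality for the standard Gaussian, and Hermite orthogonality. This does close: using $\KL(p\,\|\,\phi)\le\chi^2(p\,\|\,\phi)=\e[e^{UU'}]-1=\sum_{k\ge2}\e[U^k]^2/k!$ (with $U'$ an independent copy of $U$) together with $|\e[U^k]|\le\beta^{k-2}\nu$ and $5\beta^2\le1$ gives $\chi^2\le\tfrac{\nu^2}{2}+0.04\,\nu^2$, hence $W_2^2\le 2\,\KL\le 1.1\,\nu^2$, well within $\tfrac{11}{2}\nu^2$ — in fact a sharper constant than the coupling-through-$\sqrt{\vI_d+\Sigma}W$ argument, and with no appeal to Zhai's lemma. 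What the paper's route buys in exchange is that it stays entirely at the level of couplings (no absolute continuity, no series manipulation) and reuses the same mechanism as Lemma~\ref{specializedCLT}, which is needed separately. One repair to your write-up: the pointwise step ``$(1+\epsilon)\log(1+\epsilon)\le\epsilon+\tfrac12\epsilon^2+O(\epsilon^3)$'' is not valid uniformly — here $\epsilon=g(x)-1$ ranges over $(-1,\infty)$ and grows like $e^{\beta|x|}$, and on $(-1,0]$ the inequality without the cubic correction actually reverses — so you should replace that step by the global bounds $x\log x\le x^2-x$ or $\KL\le\log(1+\chi^2)\le\chi^2$, after which the term-by-term Hermite computation above makes the argument rigorous.
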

A naive bound would be $\W{\law(Z)}{\law(Z+V)} \leq \nu,$ which corresponds to the Gaussians being coupled identically. Note that $\nu^2$ can be much smaller than $\nu,$ and this leads to our sharp result. 

\textbf{Interpreting overdamped PLMC as LMC with perturbed Gaussian noise.} From the definition in Section \ref{plmc_definitions}, overdamped PLMC can be written as follows. \small
\[\tilde X_{t,i+1} = \txtki - \ebk \nabla f(\txtki) + \sqrt{\frac{2\eta}{k}}\tilde Z_{t,i},\]
\normalsize
where $\zt$ denotes the perturbed Gaussian and is given by the following expression. \small\[\zt =  \sqrt{\frac{\eta k}{2}}(H_{t,i}-1/k)(\nabla f(\tilde X_{t,0}) - \nabla f(\xpi)) + \sqrt{\frac{\eta}{2k}}(\nabla f(\xp) - \nabla f(\xpi)) + \z.\] \normalsize
Conditioned on the previous iterates $\xt,\xpi$ and $\txtki,$ this is a Gaussian with mean $B_{t,i} = \sqrt{\frac{\eta}{2k}}(\nabla f(\xp) - \nabla f(\xpi)),$ perturbed by the zero-mean random vector $S_{t,i} = \sqrt{\frac{\eta k}{2}}(H_{t,i}-1/k)(\nabla f(\tilde X_{t,0}) - \nabla f(\xpi)).$ Note that $S_{t,i}$ lies in a one dimensional subspace a.s., since it is determined by the Bernoulli $(\bern).$

\textbf{Gradient descent is contractive.} Given $\eta <1,$ and that $f$ is well-conditioned (Assumption \ref{well_conditioned_F}), the map $T(x) = x - \eta\nabla f(x)$ is Lipschitz with parameter $(1-\alpha\eta).$ 

\textbf{Constructing a coupling.} As seen in Section \ref{plmc_definitions}, iterates of Langevin Monte-Carlo are defined by \small\[X_{t,i+1} = X_{t,i} - \frac{\eta}{k}\nabla f(X_{t,i}) + \sqrt{\frac{2\eta}{k}}Y_{t,i}.\] \normalsize
In order to couple $X_{t,i+1}$ and $\tilde X_{t,i+1},$ we first let $X_{t,i}$ and $\tilde X_{t,i}$ be coupled optimally. Conditioned on $X_{t,i}, \txtki, \xpi$ and $\xt,$ we couple $\y$ and $\zt$ optimally as per the bound established in Lemma \ref{new_CLT}. This allows us to produce a recursion of the following form. \small
\[\W{\law(X_{t,i+1})}{\law(\tilde X_{t,i+1})} \leq (1-\frac{\alpha\eta}{2k})\W{\law(X_{t,i})}{\law(\tilde X_{t,i})} + E_{t,i},\] \normalsize
where $E_{t,i}$ is an appropriate bound on the discretization error.

\textbf{Bounding the discretization error.} The application of the CLT as detailed above gives us terms of the form $\e||\txtki-\xt||^p$ and $\e||\txtki^+-\xt||^p$ for some $p \in \mathbb{N}$. These can be bounded in terms of $\e||\nabla f(\xt)||^p$ and Gaussian moments. We then reduce the bounds to $\e||\nabla f(\xt)||^2$ rather than $\e||\nabla f(\xt)||^p$, and then apply the following gradient bound, which we believe is tight. 
\begin{lemma}\label{OLMCgradient} Assuming $\eta L \leq 1/8,$ the following bound is true. \small
\[\sum_{t=0}^{N-1} \e\|\gradf{\xt}\|^2 \lesssim \frac{1}{\eta}\e[f(\tilde X_{0,0})-f(\tilde X_{N,0})] + {L d N}.\]\normalsize
\end{lemma}
This is proven in Section \ref{OLMCgradient_proof}. It is known \citep[Lemma 11]{vempala2022rapidconvergenceunadjustedlangevin} that 
  $\int_{\sR^d} \|\nabla f(x)\|^2 d\pi(x) \leq Ld$ under smoothness.
 This bound is tight when $\pi$ is Gaussian. Therefore, we expect that the dominant term $LdN$ in our bound cannot be improved at stationarity. 

\textbf{The underdamped case.}
We make the following coordinate change for the iterates of underdamped LMC/PLMC. 
\small
\[\begin{bmatrix}
    x\\y
\end{bmatrix} \to \mathcal{M} \begin{bmatrix}
    x\\y
\end{bmatrix}, \text{   where   } \mathcal M = \begin{bmatrix}
    \vI_d & 0 \\ \vI_d & \frac{2}{\gamma}\vI_d
\end{bmatrix}.\]
\normalsize
Under this transformation, and with appropriate step-size; the deterministic component of the ULMC recursion is contractive. For a precise statement, see Lemma 16 of Zhang et al.~\cite{zhang2023improveddiscretizationanalysisunderdamped}. We denote $W_{t,i} = U_{t,i} + \frac{2}{\gamma}V_{t,i},$ and $X_{t,i} = [U_{t,i}, W_{t,i}]^T.$

Under our transformation $\mathcal M$, for appropriate matrices $\am, \gm, \Gamma_{\mathcal{M}}$ defined in Section~\ref{ulmc_main_proof}, we have:
\small
\[X_{t,i+1} = \am\Big(\frac{\eta}{k}\Big) \begin{bmatrix}
    U_{t,i} \\ W_{t,i}
\end{bmatrix} - \gm\Big(\frac{\eta}{k}\Big)\begin{bmatrix}
    \nabla f(U_{t,i}) \\ 0
\end{bmatrix} + \Gamma_{\mathcal{M}}\Big(\frac{\eta}{k}\Big)Y_{t,i},\]
\[X_{t+1,0}
= X_{t,k}
\]
\normalsize

This allows the ULMC recursion to be interpreted as a noisy contraction similar to OLMC. Define $T:\sR^{2d} \to \sR^{2d}$ by
\small
\[T\begin{bmatrix}
    u \\ w
\end{bmatrix}= \am(\eta) \begin{bmatrix}
    u \\ w
\end{bmatrix} - \gm(\eta) \begin{bmatrix}
    \nabla f(u) \\ 0
\end{bmatrix}.\] \normalsize
Then $T$ is Lipschitz with constant $(1-\frac{\alpha\eta}{\gamma}+L\eta^2)$ \citep[Lemma 16]{zhang2023improveddiscretizationanalysisunderdamped}, and is hence contractive for small $\eta.$ Using this perspective, we are able to follow a similar proof technique as in the overdamped case. In this case, we require bounds on the moments $\e||\nabla f(\utkt)||^p$ and $\e||\vtkt||^p.$ We use Theorem \ref{ULMCgradients}, to bound these moments. 
\section{Conclusion:}

We considered the Poisson Midpoint discretization of Overdamped and Underdamped Langevin Dynamics, and showed state of the art oracle complexity of $\tilde{\mathcal O}_{\kappa,d}(\frac{1}{\epsilon^{1/3}})$ for convergence in the Wasserstein-2 distance to the strong log-concave stationary law $\pi$. This breaks the conjectured lower bound of $\tilde{ \Omega}_{\kappa,d}(\frac{1}{\epsilon^{2/3}})$. Our work is an effort towards understanding the fundamental computational complexity of sampling from strongly log-concave distributions in terms of $\kappa, \epsilon$ and $d$, and shows an improved bound in terms of $\epsilon$. Concurrent work \citep{altschuler2025shiftedcompositionivunderdamped} claims an improvement of the state of the art dependence on $\kappa$ (from $\kappa^{7/6} \to \kappa^{5/6}$) but with a worse dependence on $\epsilon, d$. In future, we hope to explore techniques which simultaneously improve dependence on all three parameters. In particular, we believe our result can be improved in $\kappa$ if we obtain tight bounds on the moments $\e||\nabla f(\utkt)||^{p}$ and $\e||\vtkt||^p$ (Remark \ref{suboptimality_ulmc}), and this is an avenue for future research. 

\section*{Acknowledgment:}
We thank Sinho Chewi for pointing us to \cite{Cao_2021}, which presents lower bounds for the complexity of sampling via discretization of Underdamped Langevin Dynamics.

\newpage
\bibliography{citations}
\bibliographystyle{plain}

\appendix
\pagebreak
\section{Efficient Implementation of Underdamped PLMC}
\begin{algorithm}
\caption{Efficient Implementation of Underdamped PLMC Step.}\label{efficient_implementation_underdamped_plmc}
\textbf{Step 1.} Generate $\sI_t = \{i_1,\dots,i_{N_t}\}$ such that $H_{t,i} = 1$ iff $i \in \sI_t,$ and $i_1<\dots< i_{N_t}$ without loss of generality. \;

\textbf{Step 2.} Let $m_{t,0} \leftarrow 0,$ and $Z_{t,n} \in \sR^{2d}$ be a sequence of i.i.d. standard Gaussians. For $1 \leq n \leq N_t+1$: 
\[m_{t,n} \leftarrow A\Big(\frac{\eta (i_n-i_{n-1})}{k}\Big)m_{t,n-1} + \Gamma\Big(\frac{\eta (i_n-i_{n-1})}{k}\Big)Z_{t,n},\] \;
with the convention that $i_0 = 0$ and $i_{N_t+1} = k-1.$ \;

\textbf{Step 3.} For $1 \leq n \leq N_t,$ compute 
\[\begin{bmatrix}
    \tilde U_{t,i_n}^+ \\ \tilde V_{t,i_n}^+
\end{bmatrix} \leftarrow A\Big(\frac{\eta i_n}{k}\Big)\begin{bmatrix}
    \utkt \\ \vtkt
\end{bmatrix} - G\Big(\frac{\eta i_n}{k}\Big) \begin{bmatrix}
    \nabla f(\utkt)\\0
\end{bmatrix} + m_{t,n}.\] \;

\textbf{Step 4.} Compute the correction term: 
\[\Delta_t \leftarrow k \sum_{n = 1}^{N_t} A\Big(\frac{\eta(k-1-i_n)}{k}\Big)G\Big(\ebk\Big)\begin{bmatrix}
    \nabla f(\utkt) - \nabla f(\tilde U_{t,i_n}^+) \\ 0
\end{bmatrix}\] \; 

\textbf{Step 5.} Compute $\tilde U_{t+1,0}$ and $\tilde V_{t+1,0}:$
\[\begin{bmatrix}
    \tilde U_{t+1,0} \\ \tilde V_{t+1,0}
\end{bmatrix} \leftarrow A(\eta)\begin{bmatrix}
    \utkt \\ \vtkt
\end{bmatrix} - G(\eta) \begin{bmatrix}
    \nabla f(\utkt) \\ 0
\end{bmatrix} + \Delta_t + m_{t,N_t+1}\]
\end{algorithm}
\section{Proof of Lemma \ref{new_CLT}}
By the triangle inequality for $\mathcal W_2,$ we have 
\begin{align*}
    \W{\law(Z)}{\law(Z+V)}& \leq 2\W{\law(Z)}{\law(\sqrt{\vI_d+\Sigma}Z)}\\&+2\W{\law(\sqrt{\vI_d+\Sigma}Z)}{\law(Z+V)}
\end{align*}
The latter term is a Wasserstein distance between Gaussians, which has the following closed form. \[2\W{\sqrt{\vI_d+\Sigma}W}{Z} = 4+2\nu - 4\sqrt{1+\nu} \leq \frac{1}{2}\nu^2.\] 

The former term is bounded below (Lemma \ref{specializedCLT}), using a key result due to Alex Zhai. We check that the proof of \cite[Lemma 1.6]{zhai2017highdimensionalcltmathcalw2distance} does not require $n$ to be an integer and state the following:
\begin{lemma}[Lemma 1.6, \cite{zhai2017highdimensionalcltmathcalw2distance}]\label{Zhai} Let $n >0$ and let $Y$ be an $\mathbb R^k$ valued random variable with mean $0$, covariance $\Sigma/n$ and $\|Y\| \leq \frac{\beta}{\sqrt n}$ almost surely. For $t \geq 0$, let $Z_t$ denote a Gaussian of mean 0 and covariance $t\Sigma$ independent of $Y$. Let $\sigma_\text{min}^2$ denote the smallest eigenvalue of $\Sigma$. Then, for any $n \geq \frac{5\beta^2}{\sigma_\text{min}^2},$ we have \[\mathcal W_2(Z_1, Z_{1-1/n}+Y) \leq \frac{5\sqrt k \beta}{n\sqrt n}.\]
\end{lemma}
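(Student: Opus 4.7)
The plan is to adopt Zhai's original proof of \cite{zhai2017highdimensionalcltmathcalw2distance}[Lemma 1.6] essentially verbatim, and simply check that every step of his argument treats $n$ as a continuous parameter rather than an integer sample size. Since the bound $5\sqrt{k}\beta/n^{3/2}$ is itself continuous in $n$, this is plausible a priori; the work is in auditing the proof.

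First, I would reduce to the isotropic case. By the linear change of variables $x \mapsto \Sigma^{-1/2}x$, one may assume $\Sigma = \vI_k$, so that $Y$ has mean zero, covariance $\vI_k/n$, and $\|Y\|\le \beta/\sqrt{n}$ a.s., while $Z_t\sim \mathcal{N}(0,t\vI_k)$ is a standard Brownian motion run for time $t$. The hypothesis $n\ge 5\beta^2/\sigma_{\min}^2$ becomes simply $n\ge 5\beta^2$, which matches the scalar condition Zhai needs to control his Taylor remainders. Neither this reduction nor the hypothesis uses that $n$ is an integer.

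Second, I would set up the coupling Zhai uses, namely a Brownian interpolation. Realize $Z_t$ as a standard Brownian motion $(B_s)_{s\ge 0}$ in $\mathbb{R}^k$. The goal is to match the law of $B_1$ with that of $B_{1-1/n}+Y$ (with $Y$ independent of $B$) through a drifted Brownian motion: construct an $\mathbb{R}^k$-valued process $(\tilde B_s)_{s\in[0,1]}$ with $\tilde B_0=0$ and $\tilde B_1 \stackrel{d}{=} B_{1-1/n}+Y$, driven by the same Brownian filtration as $B$, with drift chosen so that $\mathbb{E}\|B_1-\tilde B_1\|^2$ can be estimated via Itô's formula. Zhai constructs this drift from the logarithmic gradient of the heat-smoothed density of $Y$; crucially, the smoothing parameter is $1-1/n$, which is just a real number in $[0,1)$, so all density estimates and derivative bounds go through for any real $n\ge 5\beta^2$.

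Third, I would bound the transport cost. The drift term in the above SDE involves $\nabla\log \phi_{1-1/n}* \mu_Y$, where $\mu_Y$ is the law of $Y$ and $\phi_t$ is the Gaussian density of covariance $t\vI_k$. A Taylor expansion of $\log\phi_{1-1/n}(x-y)$ in $y$ around $0$ gives a first-order term that vanishes since $\mathbb{E}[Y]=0$, a second-order term that vanishes since $\mathbb{E}[YY^\top]=\vI_k/n$ matches the covariance of the target $\mathcal{N}(0,\vI_k/n)$, and a third-order remainder of size $O(\|Y\|^3)\le O(\beta^3/n^{3/2})$. Integrating this remainder against $B_{1-1/n}$ and using the dimensional factor $\sqrt{k}$ from the Hessian of the log-density produces the claimed $5\sqrt{k}\beta/n^{3/2}$ bound. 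None of these Taylor estimates use $n\in\mathbb{N}$; they only require $n\ge 5\beta^2$ so that $\beta/\sqrt{n}\le 1/\sqrt{5}$, keeping the remainder well-behaved.

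The main place where integrality could in principle be invoked is at the very start, if one tried to write $B_{1-1/n}$ as the sum of $n-1$ i.i.d.\ increments, or tried to iterate the lemma $n$ times. I would specifically check that Zhai never does this: his argument operates on a single interpolation over $[0,1]$, with $1/n$ appearing only as a real-valued time-step, so the integer hypothesis can simply be dropped.
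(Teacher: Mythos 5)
The paper never actually reproves this lemma: its entire justification is the sentence preceding the statement, namely that the proof of Lemma~1.6 in \cite{zhai2017highdimensionalcltmathcalw2distance} has been checked not to use integrality of $n$, after which the result is quoted. Your plan --- take Zhai's argument as-is and audit it for uses of $n\in\mathbb{N}$ --- is therefore the same move the paper makes, and your closing observation (that $1/n$ enters only as a real-valued time/smoothing parameter, and that nothing decomposes $Z_{1-1/n}$ into $n-1$ i.i.d.\ increments or iterates the lemma $n$ times) is precisely the check the authors claim to have performed.

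Two caveats about your sketch, however. First, the opening reduction to $\Sigma=\vI_k$ via $x\mapsto\Sigma^{-1/2}x$ is not harmless: whitening turns the almost-sure bound into $\|\Sigma^{-1/2}Y\|\le \beta/(\sigma_{\min}\sqrt{n})$, and mapping the isotropic estimate back through $\Sigma^{1/2}$ multiplies the Wasserstein distance by up to $\sigma_{\max}$, so this route only gives $W_2 \le 5\sqrt{k}\,\beta\,(\sigma_{\max}/\sigma_{\min})/n^{3/2}$, weaker than the stated bound by the condition number of $\Sigma$. Zhai's lemma carries no such factor, so the anisotropic case must be handled directly rather than by naive whitening; in the present paper this loss happens to be immaterial because Lemma~\ref{Zhai} is only invoked inside Lemma~\ref{specializedCLT} with $k=1$ and $\sigma_{\min}=1$, where the rescaling is an exact scalar one, but as a proof of the lemma as stated it is a gap. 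Second, the coupling you describe --- a drifted Brownian interpolation with drift built from $\nabla\log(\phi_{1-1/n}\ast\mu_Y)$ --- is a plausible F\"ollmer/Talagrand-style reconstruction, but you have not verified that it is the argument Zhai actually writes; since the only nontrivial content of the statement is that \emph{his} proof is valid for non-integer $n$, the audit has to be carried out on his proof, not on a proof one imagines he might have given. As written, you assert rather than establish that extension --- which, to be fair, is the same standard of justification the paper itself applies.
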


We note that the following Lemma is similar in form and proof to Lemma 7 of \cite{kandasamy2024poissonmidpointmethodlangevin}. 
\begin{lemma}\label{specializedCLT}
    Let $V$ be a random vector in $\sR^d$ satisfying the following conditions: 
    \begin{enumerate}
        \item $\|V\| \leq \beta$ a.s., $\e[V] = 0$, and $\e[VV^T] = \Sigma.$ 
        \item $V$ lies in a one-dimensional subspace almost surely. 
    \end{enumerate}
    Suppose the random vector $Z$ is distributed as $\mathcal{N}(0,\vI_d),$ and independent of $V$. Let $\nu = \text{Tr}(\Sigma),$ Then, $$\W{\law(\sqrt{\vI_d+\Sigma}Z)}{\law(Z+V)} \leq 5\nu^2+{\mathbf 1}_{5\beta^2 > 1} \cdot 2\nu.$$ 
\end{lemma}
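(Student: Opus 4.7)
The plan is to exploit the rank-one structure of $\Sigma$ to reduce the $d$-dimensional claim to a scalar Wasserstein bound, and then split into two regimes depending on whether $5\beta^2\le 1$. Since $V$ almost surely lies in a (deterministic) one-dimensional subspace, I would first write $V=vu$ for some unit vector $u\in\bR^d$ and scalar $v$ with $\e[v]=0$, $\e[v^2]=\nu$, and $|v|\le\beta$ a.s. Then $\Sigma=\nu uu^T$ and $\sqrt{\vI_d+\Sigma}=\sqrt{1+\nu}\,uu^T+(\vI_d-uu^T)$. Decomposing $Z=(u^TZ)u+Z_\perp$ and coupling the orthogonal component $Z_\perp$ identically across the two laws cancels its contribution, reducing the problem to bounding $W_2^2(\sqrt{1+\nu}\,z,\,z'+v)$ on $\bR$, where $z,z'\sim\mathcal N(0,1)$ and $v$ is independent of $z'$.

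For Case 1 ($5\beta^2\le 1$), I would invoke Lemma \ref{Zhai} with $k=1$, effective covariance $\Sigma_Z=1+\nu$, $Y=v$, and $n=(1+\nu)/\nu$. The choice of $n$ guarantees $\Sigma_Z/n=\nu=\e[v^2]$; the a.s. bound becomes $|Y|\le\beta=\beta_Z/\sqrt n$ with $\beta_Z=\beta\sqrt n$; and the admissibility condition $n\ge 5\beta_Z^2/\sigma_{\min,Z}^2=5\beta^2/\nu$ reduces to $1+\nu\ge 5\beta^2$, which holds. Since $(1-1/n)(1+\nu)=1$, $Z_{1-1/n}\sim\mathcal N(0,1)$ and $Z_1\stackrel{d}{=}\sqrt{1+\nu}\,z$, so Zhai's estimate gives $W_2\le 5\beta_Z/(n\sqrt n)=5\beta/n=5\beta\nu/(1+\nu)$, and hence $W_2^2\le 25\beta^2\nu^2\le 5\nu^2$ using $5\beta^2\le 1$.

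For Case 2 ($5\beta^2>1$), I would use the synchronous coupling $z=z'$ with $v$ taken independent, which gives $\e[(\sqrt{1+\nu}\,z-z-v)^2]=(\sqrt{1+\nu}-1)^2+\nu\le \nu^2/4+\nu\le 5\nu^2+2\nu$, using the elementary bound $\sqrt{1+\nu}-1\le\nu/2$. Combining the two cases completes the proof. The main obstacle is really just bookkeeping: matching the scaling parameters $(n,\Sigma_Z,\beta_Z)$ in Lemma \ref{Zhai} to the given $(V,\Sigma,\beta)$ so that its hypotheses and conclusion align exactly with the one-dimensional reduction. Once that is set up, both cases follow by short direct calculations.
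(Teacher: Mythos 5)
Your proposal is correct and follows essentially the same route as the paper: reduce to one dimension using the rank-one structure of $\Sigma$ (coupling the orthogonal directions identically), apply Lemma \ref{Zhai} with $k=1$ and $n=1+\tfrac{1}{\nu}$ (after the same rescaling of $\beta$) when $5\beta^2\le 1$, and use the synchronous coupling with independent $V$ when $5\beta^2>1$. The only differences are cosmetic bookkeeping — you apply Zhai at covariance scale $1+\nu$ instead of rescaling to unit variance, and your Case 2 estimate $(\sqrt{1+\nu}-1)^2+\nu\le\nu^2/4+\nu$ is absorbed into $5\nu^2+2\nu$ rather than bounded directly by $2\nu$ — neither of which changes the argument.
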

\begin{proof}
    These distributions are the same along all directions perpendicular to $V.$ We couple those directions identically. Let $V'$ denote the projection of $V$ onto the direction spanned by itself, and $Z'$ denote a one-dimensional Gaussian. We get 
    \begin{align*}
        \mathcal W_2(\text{Law}(\sqrt{\vI_d+\Sigma}Z),\text{Law}(Z+V)) & \leq \mathcal W_2({\text{Law}(\sqrt{1+\nu}Z')},{\text{Law}(Z'+V')}) \\
        & = \sqrt{1+\nu}\mathcal W_2({\text{Law}(Z')},{\text{Law}(\tfrac{Z'}{\sqrt{1+\nu}}+\tfrac{V'}{\sqrt{1+\nu}}})).
        \intertext{Now set $k=1,$ $n=1+\frac{1}{\nu},$ and $\beta \to \beta\sqrt n$. Here $\sigma_{\text{min}} = 1,$ which means $5\beta^2 \leq 1$ is sufficient to apply Lemma \ref{Zhai}. }
        \mathbf 1_{5\beta^2\leq 1} \cdot \W{\text{Law}(\sqrt{\vI_d+\Sigma}Z)}{\text{Law}(Z+V)} & \leq \mathbf 1_{5\beta^2\leq 1}\cdot \frac{25\beta^2\nu^2}{1+\nu} \leq 5\nu^2. \\
        \intertext{When $5\beta^2 > 1,$ we couple $\law(\sqrt{1+\nu}Z')$ and $\law(Z'+V')$ to have the same Gaussian noise $Z'$, with $V'$ sampled independently of $Z'$. A simple computation yields}
        \mathbf 1_{5\beta^2 > 1} \cdot \W{\text{Law}(\sqrt{1+\nu}Z')}{\text{Law}(Z'+V')} & \leq \mathbf 1_{5\beta^2 > 1} \cdot 2\nu. 
    \end{align*}
\end{proof}
\section{Proof for Overdamped PLMC}\label{olmc_main_proof}
Recall from Section \ref{plmc_definitions} that $X_{t,i}$ denote the iterates of overdamped Langevin Monte Carlo with step-size $\ebk$. Similarly $\txtki$ denote the iterates of Poisson overdamped Langevin Monte Carlo with step size $\ebk$, and $\xpi$ denote midpoints. 
\begin{align*}
    X_{t,i+1} & = \x - \frac{\eta}{k} \nabla f(\x) + \sqrt{\frac{2\eta}{k}}\y  \\
    \tilde X_{t,i+1} & = \xp - \frac{\eta}
    {k} \nabla f(\xt) + \eta H_{t,i} (\nabla f(\xt) - \nabla f(\xpi))  + \sqrt{\frac{2\eta}{k}} \z \\
    \xpi & = \xt - \frac{\eta i}{k} \nabla f(\xt) + \sqrt{\frac{2\eta}{k}}\sum_{j = 0}^{i} Z_{t,j}
\end{align*}
The sequences $\z$ and $\y$ are i.i.d. standard Gaussians, and $H_{t,i}$ are independent Bernoullis with parameter $1/k$. All random variables above live on the same probability space, with a coupling we will specify. To interpret PLMC as LMC with a perturbed noise, we write 

\[\tilde X_{t,i+1} = \txtki - \ebk \nabla f(\txtki) + \sqrt{\frac{2\eta}{k}}\tilde Z_{t,i},\]
where $\zt$ denotes the perturbed Gaussian and is given by the following expression. \[\zt =  \sqrt{\frac{\eta k}{2}}(H_{t,i}-1/k)(\nabla f(\tilde X_{t,0}) - \nabla f(\xpi)) + \sqrt{\frac{\eta}{2k}}(\nabla f(\xp) - \nabla f(\xpi)) + \z.\] 

Let $B_{t,i} = \sqrt{\frac{\eta}{2k}}(\nabla f(\xp) - \nabla f(\xpi)),$ and $S_{t,i} = \sqrt{\frac{\eta k}{2}}(H_{t,i}-1/k)(\nabla f(\tilde X_{t,0}) - \nabla f(\xpi)).$ We refer to these as the bias and variance terms respectively. 

Define the event:
\[\g = \{\tilde X_{t,0} = y_0, \xp = y, \xpi = y^+, \x = x\},\]
with $x,y,y^+$ and $y_0$ being arbitrary points in $\mathbb R^d.$   For any valid coupling of $X_{t,i+1}$ and $\tilde X_{t,i+1}$ conditioned on $\g,$ the following holds. 
\begin{proposition}\label{OLMCamgm}
     Let Assumption \ref{well_conditioned_F} hold and let $\frac{\alpha\eta}{k} < 1$. Then we have,
     \[\e[ ||X_{t,i+1} - \tilde X_{t,i+1}||^2 | \g] \leq (1-\frac{\alpha\eta}{2k})^2 ||x-y||^2 + \frac{9\eta L^2}{\alpha k}||y- y^+||^2 + \frac{2\eta}{k} \e[||\z+S_{t,i}- \y||^2|\g].\]
\end{proposition}
The proof of this Proposition is in Section \ref{OLMCamgm_proof}. The first term arises from the contractivity of gradient descent, while the second term comes from the bias. We apply Lemma \ref{new_CLT} to bound the final term. 
\begin{corollary}\label{conditionalCoupling} Let $\nu = \text{Tr}(S_{t,i}S_{t,i}^T|\g),$ and $\beta^2 = \frac{\eta k L^2}{2}||y_0-y^+||^2.$ Let $\mathcal E\in \sigma(\tilde X_{t,0} , \xp, \xpi, \x)$ be an event. Conditioned on $\g$, there exists a coupling of $\y,H_{t,i}$ and $\z$ such that under Assumption \ref{well_conditioned_F},
\[\e[||\z+S_{t,i} - \y||^2|\g] \leq (\is + \ib) \cdot 2\nu + \mathbf 1_{\mathcal E^c} \cdot \frac{11}{2}\nu^2.\] 
\end{corollary}
\begin{proof}
Under the event $\mathcal E$, we couple the Gaussians $\y$ and $\z$ identically (i.e, $\y = \z$). This gives $\e[\|\z+S_{t,i} - \y\|^2|\g] = \e[\|S_{t,i}\|^2|\g] = \nu.$ Under $\mathcal E^c,$ couple them as in the Lemma \ref{new_CLT}.
\end{proof}
\begin{remark}
    Note that $\e(\bern)^2 \leq 1/k,$ so $\nu \leq \eta^2L^2||\xt - \xpi||^2.$ The above Corollary is a slight technical modification of Lemma \ref{new_CLT}. We later choose $\mathcal E$ so that we may neglect terms proportional to $||\nabla f(\xt)||^4,$ arising from our bounds on $\nu^2.$ This is detailed in Lemma \ref{coupling_bound_final}.
\end{remark}
With the above results, we produce an explicit coupling of $X_{t,i+1}$ and $\tilde X_{t,i+1}$ to bound the Wasserstein distance between their distributions. This involves coupling $X_{t,i}$ optimally with $\txtki,$ and bounding movement terms of the form $\e||\tilde X_{t,i}-\tilde X_{t,0}||^p$ and $\e ||\tilde X_{t,i}^+ - \tilde X_{t,0}||^p.$ These moments can be reduced to gradient and Gaussian terms, using the following Lemma. 

\begin{lemma}[Lemma 12, Kandasamy \& Nagaraj~\cite{kandasamy2024poissonmidpointmethodlangevin}]\label{auxiliary} Let $\mtk = \sup_{0 \leq j< k}||\sum_{i=0}^j \sqrt{\frac{2\eta}{k}} \z||$, and $p \in \mathbb N.$ Let $N_t := \sum_{i=0}^{k-1} H_{t,i}.$ Then the following bounds are true. 
        \begin{align*}
        \sup_{0 \leq i \leq k-1} ||\xpi - \tilde X_{t,0}||  & \leq \eta ||\gradf{\tilde X_{t,0}}|| + M_{t,k}. \\
        \sup_{0\leq i \leq k-1} ||\xpi - \xp|| & \leq \eta L N_t \sup_{i \leq k-1}||\xpi - \tilde X_{t,0}||. \\ 
        \e[\mtk^p] & \leq (\eta d)^{p/2}.
        \end{align*}
\end{lemma}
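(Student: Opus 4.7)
The three bounds are essentially structural: two are algebraic identities combined with the triangle inequality, and the third is a standard maximal moment estimate for a Gaussian random walk, so the plan is to handle them in sequence.

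For (1), I would start from the explicit formula for the cheap interpolant. Substituting the overdamped specializations $A_h = \vI_d$, $G_h = h\vI_d$, $\Gamma_h = \sqrt{2h}\vI_d$ into the general definition of $\xpi$ gives
\[
\xpi - \xt \;=\; -\tfrac{\eta i}{K}\gradf{\xt} + \sqrt{\tfrac{2\eta}{K}}\sum_{j=0}^{i-1}Y_{tK+j}.
\]
Applying the triangle inequality, using $i/K \le 1$ to absorb $\eta i/K$ into $\eta$, and recognizing the Gaussian partial sum as bounded by $\mtk$ (which is the supremum of exactly these partial sums) yields (1) after taking the supremum over $i$.

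For (2), the plan is to unroll the PLMC recursion $\tilde X_{tK+j+1} - \tilde X_{tK+j}$ from $j=0$ to $j=i-1$. The noise and $-\tfrac{\eta}{K}\gradf{\xt}$ contributions telescope into exactly the expression for $\xpi$, leaving
\[
\xp - \xpi \;=\; \sum_{j=0}^{i-1}\eta H_{j,t}\bigl(\gradf{\xt}-\gradf{\hat{\tilde X}_{tK+j}}\bigr).
\]
Then $L$-smoothness gives $\|\gradf{\xt}-\gradf{\hat{\tilde X}_{tK+j}}\| \le L\|\hat{\tilde X}_{tK+j}-\xt\|$, and pulling the supremum over $j$ out of the sum leaves $\sum_{j=0}^{i-1}H_{j,t} \le N_t$, which gives (2).

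The bound (3) is the one requiring a nontrivial probabilistic ingredient. The process $S_j := \sqrt{2\eta/K}\sum_{i=0}^{j}Y_{tK+i}$ is a martingale in $\bR^d$, so $\|S_j\|$ is a non-negative submartingale by Jensen's inequality. Doob's $L^p$ maximal inequality (for $p>1$; the case $p=1$ can be handled via truncation or the $L\log L$ version) then bounds $\e[\mtk^p] = \e[\sup_{j<K}\|S_j\|^p]$ by a constant multiple of $\e[\|S_{K-1}\|^p]$. Since $S_{K-1}\sim\mathcal N(0,2\eta \vI_d)$, standard Gaussian moment estimates give $\e[\|S_{K-1}\|^p] \lesssim (\eta d)^{p/2}$. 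The main obstacle here is marshalling the correct $p$-dependent constants to obtain a clean statement without extra factors of $p$; in our paper's applications we only need small $p$, so this is mild. Since the lemma is cited verbatim from \cite{kandasamy2024poissonmidpointmethodlangevin}, I would in practice just quote that result, noting that the overdamped formulas for $A_h, G_h, \Gamma_h$ and the PLMC recursion used there coincide with those in Section~\ref{Problem_setup}.
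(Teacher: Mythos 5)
Your proposal is correct and takes essentially the standard route, which is the same as that of the cited source (this paper only quotes Lemma 12 of \cite{kandasamy2024poissonmidpointmethodlangevin} and does not reprove it): the first two bounds are exactly the explicit expansion of the cheap interpolant and the unrolled PLMC recursion combined with the triangle inequality, $L$-smoothness, and $\sum_j H_{j,t}\le N_t$, while the third is Doob's maximal inequality plus Gaussian norm moments. Your concern about constants in the third bound is well founded but harmless: under this paper's normalization the constant-free form cannot hold as written (already $\e[\mtk^2]\ge \e\|\sqrt{2\eta/K}\sum_{i=0}^{K-1}\y[i]\|^2 = 2\eta d > \eta d$), so the estimate should be read as $\e[\mtk^p]\lesssim_p(\eta d)^{p/2}$, which your Doob-plus-Gaussian-moments argument delivers and which is all that the downstream applications (only $p\in\{2,4\}$, always inside $\lesssim$ bounds) require.
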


The following Lemma is proven in Section~\ref{sec:recursion_proof}.
\begin{lemma}\label{recursion} Assume $\eta L/k \leq 1,$ and Assumption \ref{well_conditioned_F}. Then there exist absolute constants $c_1,c_2 > 0$ such that 
    \begin{align*}
        \W{\law(X_{t,i+1})}{\law(\tilde X_{t,i+1})} & \leq (1-\frac{\alpha\eta}{2k})\W{\law(\tilde X_{t,i})}{\law(X_{t,i})} + E_{t,i}, \text{ where}
    \end{align*}
    \begin{align*}
        E_{t,i} \lesssim & \Big(\eta^6L^4d+\frac{\eta^4 L^2}{k} + \frac{\eta^5 L^4}{\alpha k}\Big) \e||\gradf{\xt}||^2 \\& + \frac{\eta^4 L^4 d}{\alpha k}
        + \frac{\eta^5L^4d^2}{k} + \exp(c_1 d-(c_2\eta^2 L^2 k)^{-1}) \cdot \frac{\eta^3 L^2 d}{k}.
    \end{align*}
\end{lemma} 
\textbf{Finishing the proof.} Open the recursion in Lemma \ref{recursion}, summing the constant terms as a geometric series.  
\begin{align*}
    \W{X_{t,0}}{\tilde X_{t,0}} & \lesssim \exp(-\alpha\eta t)\W{X_{0,0}}{\tilde X_{0,0}}^2 + (\eta^6L^4kd+ {\eta^4 L^2} + \tfrac{\eta^5 L^4}{\alpha}) \sum_{s=0}^{t-1} \e ||\nabla f(\tilde{X}_{s,0})||^2 \\ & + \frac{\eta^3L^4d}{\alpha^2}+ \frac{\eta^4 L^4 d^2}{\alpha}+\exp(c_1d-(c_2\eta^2 L^2 k)^{-1})\cdot \frac{\eta^2 L^2d}{\alpha}.
\end{align*}
Note that $X_{0,0} = \tilde X_{0,0}$, so $\W{X_{0,0}}{\tilde X_{0,0}} = 0.$ The gradient term $\sum_{t=0}^{N-1} \e ||\nabla f(\xt)||^2$ is bounded in the following Lemma \ref{OLMCgradient}, proven in Section \ref{OLMCgradient_proof}.
\section{Deferred Proofs for Overdamped PLMC}
\subsection{Proof of Proposition \ref{OLMCamgm}}\label{OLMCamgm_proof}
Let $T(x) = x - \frac{\eta}{k} \nabla f(x).$ Under the assumption $\alpha\eta/k < 1,$ it follows from the strong convexity and smoothness of $f$ that $T$ is a contraction with Lipschitz constant $(1-\frac{\alpha\eta}{k}).$ By definition, we have
    \[X_{t,i+1} = T(\x) + \sqrt{\frac{2\eta}{k}}\y, \text{ and } \tilde X_{t,i+1} = T(\xp) + \sqrt{\frac{2\eta}{k}}\zt.\]

    Under the event $\g$, we have:
    \begin{align*}
        \|X_{t,i+1} - \tilde X_{t,i+1}\|^2 & = \|T(x) - T(y)\|^2 + \frac{2\eta}{k} \|\y - \tilde Z_{t,i}\|^2 \\ & + 2 \sqrt{\frac{2\eta}{k}} \langle \y - \tilde Z_{t,i}, T(x) - T(y) \rangle
    \end{align*}
    
    Conditioned on $\g$, $(H_{t,i}-1/k)$ has zero mean, and $Y_{t,i}$, $\z$ are standard Gaussians. This leads to 
    \begin{align*}
        \e[ \|X_{t,i+1} - \tilde X_{t,i+1}\|^2 | \g] & = \|T(x)-T(y)\|^2 - {\frac{2\eta}{k}} \langle \nabla f(y) - \nabla f(y^+), T(x)-T(y) \rangle \nonumber \\&\quad + \frac{2\eta}{k} \e [\|\y - \tilde Z_{t,i}\|^2 | \g] \\
        & \leq (1-\frac{\alpha\eta}{k})^2\|x-y\|^2 + \frac{2\eta L}{k}(1-\frac{\alpha\eta}{k})\|y-y^+\|\cdot \|x-y\| \nonumber \\&\quad + \frac{2\eta}{k} \e [\|\y - \tilde Z_{t,i}\|^2 | \g]. 
    \end{align*}
    
    The second term is bounded using the AM-GM inequality. For any arbitrary $\gamma > 0,$ $$\frac{2\eta L}{k}\|y-y^+\|\cdot \|x-y\| \leq \frac{4\eta^2 L^2}{\gamma}\|y-y^+\|^2 + \frac{\gamma}{k^2}\|x-y\|^2. $$
    In particular, with $\gamma = \alpha\eta k/2,$ 
    \begin{align*}
        &(1-\frac{\alpha\eta}{k})^2\|x-y\|^2 + \frac{2\eta L}{k}(1-\frac{\alpha\eta}{k})\|y-y^+\|\cdot \|x-y\| \\
        & \leq (1-\frac{\alpha\eta}{k})(1-\frac{\alpha\eta}{2k})\|x-y\|^2+(1-\frac{\alpha\eta}{k})\frac{8\eta L^2}{\alpha k}\|y-y^+\|^2 \\
        & \leq (1-\frac{\alpha\eta}{2k})^2 \|x-y\|^2 + \frac{8\eta L^2}{\alpha k}\|y-y^+\|^2.
    \end{align*}
    By definition of $\zt$, \[\zt-\y = \sqrt{\frac{\eta}{2k}}(\gradf{y}-\gradf{y^+}) + \z + S_{t,i}- \y. \]
    Square both sides, noting that $\e[\z+S_{t,i}-\y|\g] = 0,$ and $\|\gradf{y}-\gradf{y^+}\|^2 \leq L^2 \|y-y^+\|^2$ under assumption \ref{well_conditioned_F}. This gives
    \begin{align*}
        \frac{2\eta}{k}\e [\|\y - \tilde Z_{t,i}\|^2 | \g] & = \frac{\eta^2 L^2}{k^2}\|y - y^+\|^2 + \frac{2\eta}{k}\e[\|Z_{t,i}+S_{t,i} - \y\|^2|\g] \\ &
        \leq \frac{\eta L^2}{\alpha k}\|y-y^+\|^2 + \frac{2\eta}{k} \e[\|\z+S_{t,i} - \y\|^2|\g].
    \end{align*}

\subsection{Proof of Lemma~\ref{recursion}}
\label{sec:recursion_proof}
\begin{proof}
Recall the definition
$\g := \{\tilde X_{t,0} = y_0, \xp = y, \xpi = y^+, \x = x\}$. Conditioned on $\g$, we have:
    \[X_{t,i+1} = x - \frac{\eta}{k} \nabla f(x) + \sqrt{\frac{2\eta}{k}}\y\]
    \[\tilde X_{t,i+1} = y + \eta H_{t,i} (\nabla f(y_0) - \nabla f(y^+)) - \frac{\eta}
    {k} \nabla f(y_0) + \sqrt{\frac{2\eta}{k}} \z.\]

Conditioned on $\g$, we couple $(\z, H_{t,i})$ and $\y$ as in Corollary \ref{conditionalCoupling}. This allows us to define $(X_{t,i+1},\tilde X_{t,i+1})$ using the equations above and gives a conditional coupling of $(\y, H_{t,i}, \z, X_{t,i+1},\tilde X_{t,i+1} )$. 

    We produce an unconditional coupling as follows: Couple $X_{t,i}$ and $\txtki$ optimally w.r.t. to $\mathcal W_2$, then sample $\xpi$ and $\xt$ jointly conditioned on $\txtki$. Conditioned on $(\xpi, \xt, X_{t,i}, \txtki)$ (i.e, $\sigma(\tilde X_{t,0} , \xp, \xpi, \x)$), we then sample $(\z, Y_{t,i}, H_{t,i}, X_{t,i+1}, \tilde X_{t,i+1})$ from the conditional coupling described above.  Taking the expectation in Proposition \ref{OLMCamgm}, and using the bounds in Corollary~\ref{conditionalCoupling} we get: 
    \begin{align*}
        \W{X_{t,i+1}}{\tilde X_{t,i+1}}^2 \leq (1-\frac{\alpha\eta}{2k})^2\W{X_{t,i}}{\txtki}^2 + E_{t,i}, 
    \end{align*}
    where $E_{t,i} \lesssim \frac{\eta L^2}{\alpha k}\e||\txtki-\xpi||^2 + \ebk\e[(\is + \ib) \cdot 2\nu + \mathbf 1_{\mathcal E^c} \cdot \frac{11}{2}\nu^2]$ and $\mathcal E\in \sigma(\tilde X_{t,0} , \xp, \xpi, \x)$ is any event. We choose a particular event $\mathcal E$ and bound the latter term in Lemma~\ref{coupling_bound_final}. 
    The former term is bounded below, using items 1 and 2 of Lemma \ref{auxiliary}. 

    \begin{align*}
        \frac{\eta L^2}{\alpha k}\e||\txtki-\xpi||^2 \lesssim \frac{\eta^3 L^4}{\alpha k}\e \left[ N_t^2 \sup_{j\leq k-1}||\xt-{\tilde X}_{t,j}^+||^2 \right]
    \end{align*}
    Note that $N_t$ is independent of $y_0$ and $y^+,$ and $\e[N_t^2] \lesssim 1.$ Along with item 2 of Lemma \ref{auxiliary}, this gives
    \begin{align*}
        \frac{\eta L^2}{\alpha k}\e||\txtki-\xpi||^2 & \lesssim \frac{\eta^5 L^4}{\alpha k}\e||\nabla f(\xt)||^2 + \frac{\eta^3 L^4}{\alpha k}\e[\mtk^2] \\ & 
        \lesssim \frac{\eta^5 L^4}{\alpha k}\e||\nabla f(\xt)||^2 + \frac{\eta^4 L^4 d}{\alpha k}.
    \end{align*}
\end{proof}

\subsection{Proof of Lemma \ref{OLMCgradient}}\label{OLMCgradient_proof}
\begin{proof}
    Since $f$ is smooth, we have (Lemma 3.4, \cite{bubeck2015convexoptimizationalgorithmscomplexity}) \[f(\tilde X_{t+1,0})-f(\xt) \leq \langle \gradf{\xt}, \tilde X_{t+1,0} - \xt \rangle + \frac{L}{2}\|\tilde X_{t+1,0} - \xt\|^2.\]
    By definition, $\tilde X_{t+1,0} - \xt = -\eta \gradf{\xt} + \sum_{i=0}^{k-1}\eta H_{t,i}(\gradf{\xt} - \gradf{\xpi}) + \sum_{i=0}^{k-1}\sqrt\frac{2\eta}{k}\z.$ Since $\e[H_{t,i}] = 1/k$ and $\e[\z] = 0,$
    \begin{align*}
        \e\langle \gradf{\xt}, \tilde X_{t+1,0} - \xt \rangle & \leq -\eta\e\|\gradf{\xt}\|^2 \\& + \sum_{i=0}^{k-1}\frac{\eta}{k}\e\|\gradf{\xt}\|\cdot\|\gradf{\xt}-\gradf{\xpi}\| \\
        & \leq -\frac{\eta}{2}\e\|\gradf{\xt}\|^2 + \sum_{i=0}^{k-1}\frac{\eta}{2k}\e\|\gradf{\xt}-\gradf{\xpi}\|^2 \\
        & \leq -\frac{\eta}{2}\e\|\gradf{\xt}\|^2 + \frac{\eta}{2}\sup_{0 \leq i \leq k-1}\e\|\gradf{\xt}-\gradf{\xpi}\|^2 \\
        & \leq -\frac{\eta}{2}\e\|\gradf{\xt}\|^2 + \eta^3 L^2 \|\gradf{\xt}\|^2 + \eta^2 L^2 d.
    \end{align*}
    Where in the second and final steps we used $ab \leq \frac{a^2+b^2}{2}$ and Lemma \ref{auxiliary} respectively. Now we use $\|a+b\|^2 \leq 2(\|a\|^2 + \|b\|^2)$ and $\e\|\sum_{i=0}^{k-1}\sqrt\frac{2\eta}{k}\z\|^2 = 2\eta d$ to get  
    \begin{align*}
        \frac{L}{2}\|\tilde X_{t+1,0} - \xt\|^2 \leq \eta^2L \|\gradf{\xt}\|^2 + \eta^2 L \|\sum_{i=0}^{k-1}H_{t,i}(\gradf{\xt}-\gradf{\xpi})\|^2 + 2\eta L d.
    \end{align*}
    Let $N_t = \sum_{i=0}^{k-1}H_{t,i}$. Note that $\e[N_t^2] \leq 2,$ and $N_t$ is independent of $\xt.$ Triangle inequality and \ref{auxiliary} give
    \begin{align*}
        \eta^2 L \e\|\sum_{i=0}^{k-1}H_{t,i}(\gradf{\xt}-\gradf{\xpi})\|^2 & \leq \ \eta^2 L \e[N_t \sup_{0 \leq i \leq k-1}\e\|\gradf{\xt}-\gradf{\xpi}\|]^2 \\
        & \leq 4\eta^4 L^3 \e\|\gradf{\xt}\|^2 + 4\eta^3 L^3 d.
    \end{align*}
    Under our assumption $\eta L \leq 1/8,$ the terms $\eta^3 L^2 \|\gradf{\xt}\|^2, \eta^4 L^3 \e\|\gradf{\xt}\|^2,\eta^2 L^2 d$ and $\eta^3 L^3 d$ are negligible in order. Collecting the dominant terms, we get 
    \begin{align*}
        \eta \e\|\gradf{\xt}\|^2 \lesssim [f(\xt) - f(\tilde X_{t+1,0})] + \eta Ld. 
    \end{align*}
    This telescopes, leading to the result.
\end{proof}

\subsection{Proof of Corollary~\ref{OLMCcomplexity}}
\label{sec:OLMCcomplexityproof}
\begin{proof}
    By triangle inequality on $\mathcal W_2,$
    \begin{align*}
        \W{\law(\tilde X_{N,0})}{\pi} \lesssim \W{\law(\tilde X_{N,0})}{\law(X_{N,0})} + \W{\law(X_{N,0})}{\pi}.
    \end{align*}
    We show under the conditions of our Corollary that each of these terms is $\mathcal O(\epsilon^2d/\alpha).$
    To deal with the second term, recall the following Theorem for the convergence of Langevin Monte-Carlo. 
    \begin{theorem}[Corollary 10, Durmus et al.~\cite{durmus2018analysislangevinmontecarlo}]
        Suppose Assumption \ref{well_conditioned_F} is true. Let $X_n$ denote the iterates of Langevin Monte-Carlo with step-size $\gamma_\epsilon.$
        Then, with 
        \[
            \gamma_\epsilon = \frac{\epsilon^2}{4L}, \;\;\;\; n_\epsilon \geq \lceil \log(\frac{2\W{X_0}{\pi}\alpha}{\epsilon^2 d})\gamma_\epsilon^{-1}\alpha^{-1}\rceil
        \]
        we have $\W{X_{n_\epsilon}}{\pi} \leq \frac{\epsilon^2 d}{\alpha}.$
    \end{theorem}
    By our choice of $k$, we have $\ebk \lesssim \frac{\epsilon^2}{L}.$ Note that the above Theorem goes through with an inequality $\eta \leq \frac{\epsilon^2}{4L}$, so we have $\W{X_{N,0}}{\pi} \leq \frac{\epsilon^2d}{\alpha}$ for $N = \log(\frac{2\W{X_{0,0}}{\pi}\alpha}{\epsilon^2 d})(\alpha\eta)^{-1}.$ 
    Let $L_1 = 2\max(C_f, \log(\frac{2\W{X_{0,0}}{\pi}\alpha}{\epsilon^2 d})).$ Now apply Theorem \ref{OLMCmain} with 
    \[\eta \asymp \min\Big(\frac{\epsilon^{2/3}}{L_1^{1/3}L},\frac{\epsilon^{1/2}}{\kappa^{1/4}L_1^{1/4}L},\frac{\epsilon^{2/3}}{d^{1/6}L_1^{1/6}L}, \frac{\epsilon^{2/3}}{\kappa^{1/3}L},\frac{\epsilon^{1/2}}{d^{1/4}L},(\frac{c_2\epsilon^2}{c_1d - \log \epsilon^2})^{1/3} \cdot \frac{1}{L}\Big)\]
    and $N$ as above, to see $\W{\law(\tilde X_{N,0})}{\law(X_{N,0})} \lesssim \frac{\epsilon^2d}{\alpha}.$ 
\end{proof}

\section{Technical Results for OLMC}
\begin{lemma}\label{coupling_bound_final}
    Let $\beta$ and $\nu$ be defined as in Lemma \ref{conditionalCoupling}. Define the event $\mathcal E \in \sigma(\tilde X_{t,0} , \xp, \xpi, \x)$ by $\mathcal E = \{\frac{\eta^4L^2}{k}\|\gradf{\xt}\|^2 < \frac{\eta^7 L^4}{k}\|\gradf{\xt}\|^4 \}.$ Then
    \begin{align*}
        \ebk \e[(\is + \ib) \cdot \nu + \mathbf 1_{\mathcal E^c} \cdot \nu^2] & \lesssim (\eta^6L^4d+\frac{\eta^4 L^2}{k}) \e\|\gradf{\xt}\|^2 + \frac{\eta^5L^4 d^2}{k} \\ &  + \exp(c_1d-(c_2 \eta^2 L^2 k)^{-1}) \cdot \frac{\eta^3 L^2 d}{k}
    \end{align*}
    where the expectation is taken over the distribution defined in the proof of \ref{recursion}.
\end{lemma}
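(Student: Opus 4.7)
The plan is to first reduce $\nu$ and $\nu^2$ to explicit functions of $\|\gradf{\xt}\|$ and the Brownian-supremum $\mtk$ via Lemma \ref{auxiliary}, and then to bound the three indicator contributions $\is\nu$, $\ib\nu$, and $\mathbf{1}_{S^c}\nu^2$ separately, using a different tool for each.

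Since $V = \sqrt{\eta K/2}\,(\bern)[\gradf{\xt} - \gradf{\xpi}]$ lies in a one-dimensional subspace and its only randomness is the Bernoulli $H_{i,t}$ of parameter $1/K$, a direct computation gives $\nu \leq \tfrac{\eta}{2}\|\gradf{\xt} - \gradf{\xpi}\|^2 \leq \tfrac{\eta L^2}{2}\|\xt - \xpi\|^2$ by $L$-smoothness. Applying the first bound of Lemma \ref{auxiliary}, $\|\xt-\xpi\| \leq \eta\|\gradf{\xt}\|+\mtk$, together with $(a+b)^2 \leq 2a^2+2b^2$, yields
\[
\nu \lesssim \eta^3 L^2 \|\gradf{\xt}\|^2 + \eta L^2 \mtk^2, \qquad \nu^2 \lesssim \eta^6 L^4 \|\gradf{\xt}\|^4 + \eta^2 L^4 \mtk^4.
\]

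For the $\is$ piece I will use the defining condition of $S$, $\eta^3 L^2 \|\gradf{\xt}\|^2 > 1$, to upgrade the $\mtk^2$ summand to a product $\eta^3 L^2 \|\gradf{\xt}\|^2 \mtk^2$. The key structural fact is that $\mtk$ depends only on the fresh noise $(\y)_{0 \leq j < K}$ of the current batch and is independent of $\xt$, so the expectation factorises and $\e \mtk^2 \lesssim \eta d$ (Lemma \ref{auxiliary}) converts this into $\eta^6 L^4 d \cdot \e\|\gradf{\xt}\|^2$; the $\|\gradf\|^2$ summand directly gives $\tfrac{\eta^4 L^2}{K}\e\|\gradf{\xt}\|^2$. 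The $\mathbf{1}_{S^c}\nu^2$ piece is handled symmetrically: the reverse inequality $\eta^3 L^2 \|\gradf\|^2 \leq 1$ on $S^c$ yields $\eta^6 L^4 \|\gradf\|^4 \leq \eta^3 L^2 \|\gradf\|^2$, and $\e \mtk^4 \lesssim \eta^2 d^2$ (Lemma \ref{auxiliary}) controls the Brownian piece, producing the $\tfrac{\eta^5 L^4 d^2}{K}$ contribution.

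The $\ib$ piece is where the real work is. The event $\{5\beta^2 > 1\}$ reads $\|\xt-\xpi\|^2 > \tfrac{2}{5\eta K L^2}$; by Lemma \ref{auxiliary} this is contained in $\{\eta^2\|\gradf{\xt}\|^2 \gtrsim (\eta K L^2)^{-1}\} \cup \{\mtk^2 \gtrsim (\eta K L^2)^{-1}\}$. On the first sub-event the gradient is again forced to be large and the absorption trick from the $\is$ analysis applies verbatim. On the second sub-event I will invoke a Gaussian-supremum tail bound of the form $\mathbb{P}(\mtk^2 > r) \lesssim \exp(c_1 d - c_2 r/\eta)$, obtained by a coordinate-wise reflection principle plus a dimensional concentration inequality. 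Combining this at $r = \Theta((\eta K L^2)^{-1})$ with $\sqrt{\e \mtk^4} \lesssim \eta d$ via Cauchy--Schwarz produces $\tfrac{\eta^3 L^2 d}{K}\exp(c_1 d - (c_2 \eta^2 L^2 K)^{-1})$. The delicate step will be justifying the $c_1 d$ term in the exponent cleanly: a naive union over the $K$ time steps costs only $\log K$, so the dimensional cost must come from a $\chi^2_d$-type deviation inequality for the squared norm of the random walk; once this tail is in place, assembling the three contributions is routine bookkeeping.
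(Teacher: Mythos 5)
Your proposal is correct and follows essentially the same route as the paper: you bound $\nu$ and $\nu^2$ through Lemma \ref{auxiliary} exactly as in the paper's proof of Lemma \ref{coupling_bound_final}, use the defining inequality of $S$ (resp.\ $S^c$) to absorb the $\is\,\mtk^2$ and $\mathbf 1_{S^c}\|\gradf{\xt}\|^4$ terms, and treat the $\ib\,\mtk^2$ term by splitting $\{5\beta^2>1\}$ into a gradient-large sub-event (handled by Markov/absorption plus independence of $\gradf{\xt}$ and $\mtk$) and a Gaussian-large sub-event (handled by Cauchy--Schwarz together with a $\chi^2_d$-type tail for the partial-sum supremum), which is precisely the content of the paper's Lemma \ref{noise_bound}. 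The only part you flag as delicate, the $\exp(c_1 d)$ factor in the tail, is the same standard Gaussian concentration step the paper also invokes without further detail, so there is no substantive gap.
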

\begin{proof}
    Since $H_{t,i}$ is a Bernoulli random variable with parameter $1/k,$ we have $\e[(H_{t,i}-1/k)^2] \leq 1/k.$ This gives us an upper bound on $\nu$, since $\nu  = \e[\frac{\eta k}{2}(H_{t,i}-1/k)^2 \|\nabla f(y_0)-\nabla f(y^+)\|^2] \leq \frac{\eta L^2}{2}\|y_0-y^+\|^2$ under Assumption \ref{well_conditioned_F}. This gives 
    \[\ebk[(\is + \ib) \cdot \nu + \mathbf 1_{\mathcal E^c} \cdot \nu^2] \lesssim (\is + \ib)\cdot \frac{\eta^2 L^2}{k}\|y_0-y^+\|^2 + \mathbf 1_{\mathcal E^c} \cdot \frac{\eta^3 L^4}{k}\|y_0- y^+\|^4\]
    Now we apply item 1 of Lemma \ref{auxiliary} to obtain the following. 
    \begin{align*}
        \frac{\eta^2 L^2}{k}\|y_0-y^+\|^2 & \lesssim \frac{\eta^4L^2}{k}\|\gradf{\xt}\|^2 + \frac{\eta^2 L^2 }{k}\mtk^2. \\ 
        \frac{\eta^3 L^4}{k}\|y_0 -y^+\|^4 & \lesssim \frac{\eta^7 L^4}{k}\|\gradf{\xt}\|^4 + \frac{\eta^3 L^4}{k}\mtk^4. 
    \end{align*}
    Using $(\ib+\is) \lesssim 1$ and $\mathbf 1_{\mathcal E^c} \frac{\eta^7 L^4}{k}\|\nabla f(\xt)\|^4 \leq \frac{\eta^4 L^2}{k}\|\nabla f(\xt)\|^2,$ we obtain  
    \begin{align*}
        \ebk[(\is + \ib) \cdot \nu + \mathbf 1_{\mathcal E^c} \cdot \nu^2] & \lesssim \frac{\eta^4 L^2}{k}\|\nabla f(\xt)\|^2 + \frac{\eta^3 L^4}{k}\mtk^4 \\ & + (\is+\ib)\frac{\eta^2L^2}{k}\mtk^2.
    \end{align*}
    The expectations of the second term and final terms are bounded in Lemmas \ref{auxiliary} and \ref{noise_bound} respectively. 
\end{proof}

\begin{lemma}\label{noise_bound}
    Let $\beta$ and $\mathcal E$ be as in Lemma \ref{coupling_bound_final}. There exists an absolute constants $c_1$ and $c_2$ such that 
    \begin{align*}
        \e[(\ib + \is) \cdot \frac{\eta^2 L^2}{k}\mtk^2] \lesssim \eta^6 L^4 d \e\|\gradf{\xt}\|^2 + \exp(c_1{d} - (c_2 \eta^2 L^2 k)^{-1})\frac{\eta^3L^2d}{k}
    \end{align*}
\end{lemma}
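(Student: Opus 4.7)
The plan is to decompose the indicator $\ib + \is$ into pieces that are either $\sigma(\xt)$-measurable or purely functions of the fresh Brownian increments driving $\mtk$. Using $\|\xpi - \xt\| \leq \eta\|\gradf{\xt}\| + \mtk$ from Lemma \ref{auxiliary} together with $\beta^2 = \tfrac{\eta K L^2}{2}\|\xt-\xpi\|^2$ gives
\[
\ib \leq \mathbf{1}_A + \mathbf{1}_B,\quad A := \{5\eta^3 K L^2\|\gradf{\xt}\|^2 > 1/2\},\quad B := \{5\eta K L^2 \mtk^2 > 1/2\},
\]
and the event $S$ from Lemma \ref{coupling_bound_final} simplifies to $S = \{\eta^3 L^2\|\gradf{\xt}\|^2 > 1\}$. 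It then suffices to control the three expectations $\tfrac{\eta^2 L^2}{K}\e[\is\mtk^2]$, $\tfrac{\eta^2 L^2}{K}\e[\mathbf{1}_A\mtk^2]$ and $\tfrac{\eta^2 L^2}{K}\e[\mathbf{1}_B\mtk^2]$ separately.

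Both $S$ and $A$ are $\sigma(\xt)$-measurable, while $\mtk$ is a function of the independent Brownian increments $Y_{tK+0},\ldots,Y_{tK+K-1}$. Hence, using Markov's inequality and the second-moment bound $\e\mtk^2 \leq \eta d$ from Lemma \ref{auxiliary},
\[
\e[\is\mtk^2] \leq \eta^4 L^2 d\,\e\|\gradf{\xt}\|^2,\qquad \e[\mathbf{1}_A\mtk^2] \leq 10\eta^4 K L^2 d\,\e\|\gradf{\xt}\|^2.
\]
Multiplying by $\eta^2 L^2/K$ gives contributions of order $\eta^6 L^4 d\,\e\|\gradf{\xt}\|^2$ in each case, matching the first term of the target.

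The main obstacle is the $B$-term, which requires a sharp upper-tail bound on $\mtk^2$. The key observation is that the map $(y_0,\ldots,y_{K-1})\mapsto \sqrt{2\eta/K}\sup_{j<K}\|\sum_{i=0}^j y_i\|$ on $\bR^{Kd}$ is $\sqrt{2\eta}$-Lipschitz; this follows by applying the triangle inequality to the partial sums and then Cauchy--Schwarz on the increment indices. Gaussian Lipschitz concentration then yields $\p(\mtk > \e\mtk + r) \leq \exp(-r^2/(4\eta))$ for all $r\geq 0$. Combining this with $\e\mtk \leq \sqrt{\eta d}$ (from Lemma \ref{auxiliary}) and the elementary inequality $(r-\e\mtk)^2 \geq r^2/2 - (\e\mtk)^2$ produces the uniform tail estimate $\p(\mtk^2 > u) \leq \exp(c_1 d - u/(8\eta))$ for an absolute constant $c_1$.

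The final step pairs this tail with the fourth moment $\e\mtk^4 \lesssim (\eta d)^2$ from Lemma \ref{auxiliary} via Cauchy--Schwarz. With $R := 1/(10\eta K L^2)$ denoting the $B$-threshold,
\[
\e[\mathbf{1}_B\mtk^2] \leq \sqrt{\p(\mtk^2 > R)}\,\sqrt{\e\mtk^4} \lesssim \exp\bigl(\tfrac{c_1}{2}d - \tfrac{1}{160\,\eta^2 K L^2}\bigr)\cdot \eta d,
\]
and multiplying by $\eta^2 L^2/K$ produces the claimed exponential term $\exp(c_1' d - (c_2\eta^2 L^2 K)^{-1})\cdot \eta^3 L^2 d/K$ for appropriate absolute constants $c_1', c_2$. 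The only delicate point is to ensure that the prefactor in front of $d$ in the exponent remains an absolute constant (independent of $K$, $\eta$, $L$); this is guaranteed because the $d$-dependence enters only through $(\e\mtk)^2/\eta \leq d$ and not through the Lipschitz constant $\sqrt{2\eta}$.
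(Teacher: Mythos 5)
Your proof is correct and takes essentially the same route as the paper's: the same split of $\ib$ (via triangle inequality) into a gradient-driven event and a noise-driven event, the same independence-plus-Markov treatment of the $\sigma(\xt)$-measurable events $S$ and $A$ together with $\e[\mtk^2]\leq\eta d$, and the same Cauchy--Schwarz step pairing a Gaussian tail with $\e[\mtk^4]^{1/2}$ for the noise event. The only (harmless) difference is cosmetic: you state the noise event in terms of $\mtk$ and derive its tail via Lipschitz (Borell--TIS) concentration of the sup of partial sums, while the paper bounds the tail of a fixed partial-sum norm $\|\sum_{j\le i}Y_{tK+j}\|$ with an unspecified Gaussian concentration inequality; both give the same $\exp(c_1 d-(c_2\eta^2L^2K)^{-1})$ factor, and your version is if anything more explicit.
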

\begin{proof}
    Note that $\mathcal E$ is independent of $\mtk,$ and by its definition we have $1_\mathcal E \leq \eta^3 L^2 \|\nabla f(\xt)\|^2.$ As a result, 
    
    \[\e[1_\mathcal E\cdot \frac{\eta^2 L^2}{k}\mtk^2] \leq \eta^3 L^2 \e\|\gradf{\xt}\|^2\cdot \e[\frac{\eta^2 L^2}{k}\mtk^2].\]
    Recall the definition of $\beta.$
    \begin{align*}
        \beta &\leq \sqrt{\eta k}L\|\xt - \xpi\| \\
            & = \sqrt{\eta k}L\biggr\|\frac{\eta i}{k} \gradf{\xt} + \sqrt{\frac{2\eta}{k}}\sum_{j=0}^i Z_{t,j}\biggr\|.
        \intertext{Applying triangle inequality and union bound, we get }
        \mathbf 1_{\sqrt 5 \beta > 1} & \leq \mathbf 1\{\sqrt 5\eta^{3/2}k^{1/2}L\|\gradf{\xt}\| > 1\} + \mathbf 1\{\sqrt 10 \eta L \|\sum_{j=0}^i Z_{t,j}\| > 1\}.
        \intertext{Note that $\xt$ is independent of $\mtk$. To handle the second term below, apply Cauchy Schwarz and a Gaussian concentration inequality. }
        \e[\ib \cdot \frac{\eta^2 L^2}{k} \mtk^2] & \leq \p[\sqrt 5\eta^{3/2}k^{1/2}L\|\gradf{\xt}\| > 1] \cdot \frac{\eta^2 L^2}{k} \e[\mtk^2] \\ & + \p\big[\sqrt 10 \eta L \|\sum_{j=0}^i Z_{t,j}\| > 1\big]^{1/2} \cdot \frac{\eta^2 L^2}{k} \e[\mtk^4]^{1/2} \\
        & \lesssim \eta^3kL^2 \e[\|\gradf{\xt}\|^2] \cdot \frac{\eta^2 L^2}{k}\e[\mtk^2] \\ & + \exp(c_1 d - (c_2\eta^2L^2 k)^{-1}) \cdot \frac{\eta^2 L^2}{k}\e[\mtk^4]^{1/2}.
    \end{align*}
    Where $c_1, c_2> 0$ are absolute constants. Applying item 2 of Lemma \ref{auxiliary} completes the proof. 
\end{proof}
\section{Proof for Underdamped PLMC}\label{ulmc_main_proof}
\subsection{Basis change for contractivity}
Recall from Section \ref{plmc_definitions} the definitions of $\tutki, \tvtki.$ We make the following coordinate change for the iterates of underdamped LMC/PLMC.  
\[\begin{bmatrix}
    x\\y
\end{bmatrix} \to \mathcal{M} \begin{bmatrix}
    x\\y
\end{bmatrix}, \text{   where   } \mathcal M = \begin{bmatrix}
    \vI_d & 0 \\ \vI_d & \frac{2}{\gamma}\vI_d
\end{bmatrix}.\]
We denote $W_{t,i} = U_{t,i} + \frac{2}{\gamma}V_{t,i},$ and $\tilde W_{t,i} = \tilde U_{t,i} + \frac{2}{\gamma}\tilde V_{t,i}.$ Similarly, $\tilde W_{t,i}^+ = \tilde U_{t,i}^+ + \frac{2}{\gamma}\tilde V_{t,i}^+,$ and 
\[\tilde X_{t,i} = \begin{bmatrix}
    \tilde U_{t,i} \\ \tilde W_{t,i}
\end{bmatrix}, \tilde X_{t,i}^+ = \begin{bmatrix}
    \tilde U_{t,i}^+ \\ \tilde W_{t,i}^+
\end{bmatrix} \text{, and } X_{t,i} = \begin{bmatrix}
    U_{t,i} \\ W_{t,i}
\end{bmatrix}.\]

The transformed iterates $\tutki,$ $\twtki$ satisfy the following recursion. 
\begin{align*}
\begin{bmatrix}
    \tilde U_{t,i+1} \\ \tilde W_{t,i+1}
\end{bmatrix} = A_\mathcal M\Big(\frac{\eta}{k}\Big) \begin{bmatrix}
    \tilde U_{t,i} \\ \tilde W_{t,i}
\end{bmatrix} & - G_\mathcal M\Big(\frac{\eta}{k}\Big)\begin{bmatrix}
    \nabla f(\tilde U_{t,0}) \\ 0
\end{bmatrix} + \Gamma_\mathcal M\Big(\frac{\eta}{k}\Big) Z_{t,i} \\ & + k H_{t,i}\cdot G_\mathcal M\Big(\frac{\eta}{k}\Big)
\begin{bmatrix}
    \nabla f(\tilde U_{t,0}) - \nabla f(\tilde U_{t,i}^+)\\ 0
\end{bmatrix}
\end{align*}
The matrices $A_{\mathcal M}, G_{\mathcal M}$ and $\Gamma_{\mathcal M}$ account for the change of basis. It can be verified that $\am = \m A\m^{-1},$ and $\gm = \m G.$ Moreover, $\Gm = \m \Gamma,$ and these are explicated below. 
\[A_{\mathcal M}(h) = \begin{bmatrix}
    \frac{1}{2}(1+\exp(-\gamma h))\vI_d & \frac{1}{2}(1-\exp(-\gamma h))\vI_d \\
    \frac{1}{2}(1-\exp(-\gamma h))\vI_d & \frac{1}{2}(1+\exp(-\gamma h))\vI_d
\end{bmatrix}, \text{   } G_{\mathcal M}(h) = \begin{bmatrix}
    \frac{\gamma h-(1-\exp(-\gamma h))}{\gamma^2}\vI_d & 0 \\
    \frac{\gamma h + (1-\exp(-\gamma h))}{\gamma^2}\vI_d & 0
\end{bmatrix}.\]
 \[\Gamma_{\mathcal M}^2(h) = \begin{bmatrix}
    \frac{4(1-\exp(-\gamma h) - (1-\exp(2\gamma h)) + 2\gamma h}{\gamma^2}\vI_d & \frac{2\gamma h - (1-\exp(2\gamma h))}{\gamma^2}\vI_d \\
    \frac{2\gamma h - (1-\exp(2\gamma h))}{\gamma^2}\vI_d & \frac{4(1-\exp(-\gamma h) + (1-\exp(2\gamma h)) + 2\gamma h}{\gamma^2}\vI_d
\end{bmatrix}\]

In order to interpret this as ULMC with perturbed Gaussian noise, we write
\[\begin{bmatrix}
    \tilde U_{t,i+1} \\ \tilde W_{t,i+1}
\end{bmatrix} = A_{\mathcal M}\Big(\ebk\Big)\begin{bmatrix}
    \tutki \\ \twtki
\end{bmatrix}+ G_{\mathcal M}\Big(\ebk\Big) \begin{bmatrix}
    -\nabla f(\tutki) \\ 0
\end{bmatrix} + \Gamma_{\mathcal M}\Big(\ebk\Big) \zt.\] 
The perturbed Gaussian $\zt$ can be expressed as $\zt = \z + B_{t,i} + S_{t,i},$ where
\[B_{t,i} = \Gamma_{\mathcal M}^{-1}\Big(\ebk\Big)G_{\mathcal M}\Big(\ebk\Big)\begin{bmatrix}
    \nabla f(\tutki) - \nabla f(\tutki^+) \\ 0
\end{bmatrix}\]
\[S_{t,i} = k(H_{t,i} - 1/k) \Gamma_{\mathcal M}\Big(\ebk\Big)^{-1} G_{\mathcal M}\Big(\ebk\Big) \begin{bmatrix}
    \nabla f(\utkt) - \nabla f(\tutki^+) \\ 0
\end{bmatrix}.\]
Here $B_{t,i},S_{t,i}$ are called the bias and variance terms respectively. 

The midpoints are given by 
\[\begin{bmatrix}
    \tilde U_{t,i}^+ \\ \tilde W_{t,i}^+
\end{bmatrix} = A_\mathcal M\Big(\frac{\eta i}{k}\Big)\begin{bmatrix}
    \tilde U_{t,0} \\ \tilde W_{t,0}
\end{bmatrix} - G_\mathcal M\Big(\frac{\eta i}{k}\Big)\begin{bmatrix}
    \nabla f(U_{t,0}) \\ 0
\end{bmatrix} + \sum_{j=0}^{i-1}A_\mathcal M\Big(\frac{\eta(k-1-i)}{k}\Big)G_\mathcal M\Big(\frac{\eta i}{k}\Big) Z_{t,i}.\] 

The iterates of underdamped LMC satisfy 
\[\begin{bmatrix}
    U_{t,i+1} \\ W_{t,i+1}
\end{bmatrix} = A_\mathcal M\Big(\frac{\eta}{k}\Big) \begin{bmatrix}
    U_{t,i} \\ W_{t,i}
\end{bmatrix} - G_\mathcal M\Big(\frac{\eta}{k}\Big)\begin{bmatrix}
    \nabla f(U_{t,i}) \\ 0
\end{bmatrix} + \Gamma_\mathcal M\Big(\frac{\eta}{k}\Big)Y_{t,i},\]
\[\begin{bmatrix}
    U_{t+1,0} \\ W_{t+1,0}
\end{bmatrix} = \begin{bmatrix}
    U_{t,k} \\ W_{t,k}
\end{bmatrix}.\]
Here $\y$ and $\z$ are i.i.d. standard Gaussians, $H_{t,i}$ are Bernoulli with parameter $1/k$, and all random variables above live on the same probability space with a coupling yet to be specified. 

\subsection{Proof overview}
Our proof follows the same method as in the overdamped case. As before, 
We condition on the previous iterates -- with the following event: 
    \[\mathcal G = \Big\{ \xt = y_0 = \begin{bmatrix}
        u_0\\w_0
    \end{bmatrix}, \txtki = y = \begin{bmatrix}
        \tilde u \\ \tilde w
    \end{bmatrix}, \xpi = y^+ = \begin{bmatrix}
        u^+ \\ w^+
    \end{bmatrix}, X_{t,i} = x = \begin{bmatrix}
        u\\w
    \end{bmatrix} \Big\},\]

where $y_0,y,y^+$ and $x$  arbitrary points in $\mathbb R^{2d}$. For any valid coupling of $X_{t,i+1}$ and $\tilde X_{t,i+1},$ the following holds. 
\begin{proposition}\label{ULMCamgm} Assume $\eta/k \lesssim \frac{1}{\kappa\sqrt{L}},$ and $\frac{\gamma\eta}{k} < c_0$ for sufficiently small $c_0>0.$ Then with $\gamma = c_\gamma\sqrt L$ for some $c_\gamma \geq 2,$ the following holds. 
    \begin{align*}
        \e[\|X_{t,i+1} - \tilde X_{t,i+1}\|^2 | \g] & \leq (1-\Omega(\frac{\alpha\eta }{\gamma k}))\|x-y\|^2  \\ & + O\Big[ \frac{\eta L^{2}}{\alpha\gamma k}\|u^+ - \tilde u\|^2 + \frac{\eta}{\gamma k} \e[\|\z+S_{t,i}-\y\|^2|\g] \Big].
    \end{align*}
\end{proposition}
The above Proposition is proved in Section~\ref{sec:ULMCamgmproof}. The first term arises from the contractivity of the ULMC update rule, while the second term comes from the bias. Having conditioned on $\g$, we use $\ref{new_CLT}$ to bound the final term $\e[\|\z+S_{t,i}-\y\|^2 \g]$. We refer to Section~\ref{sec:prop_ULMC_coupling_proof} for the proof of the following proposition.
\begin{proposition}\label{ULMC_coupling}
    Let $p \geq 0$ be an integer. Conditioned on $\g$, there exists a coupling of $\z, H_{t,i}$ and $\y$ such that
    \[\tfrac{\eta}{\gamma k} \e[\|\z+S_{t,i}-\y\|^2|\g] \lesssim \tfrac{\eta^3L^4}{\gamma^3 k}\|u_0-u^+\|^4 + \tfrac{5^p\eta^{p+2} k^{p-1} L^{2p+2}}{\gamma^{p+2}}\|u_0-u^+\|^{2p+2}. \]
\end{proposition}
\begin{remark}
    The presence of $p$ is due to the manner in which handle the low probability event $\{5\beta^2>1\},$ appearing in Lemma \ref{new_CLT}. We use $\ib \leq 5^p\beta^{2p},$ with an appropriate bound on $\beta^{2p}$. Each choice of $p$ leads to a different error bound, so we write this in generality. 
\end{remark} 
With the above results, we produce an explicit coupling of $X_{t,i+1}$ and $\tilde X_{t,i+1}$ to bound the Wasserstein distance between their distributions. This is done by coupling $X_{t,i}$ optimally with $\tilde X_{t,i},$ then bounding the moments $\e||u^+-\tilde u||^2$ and $\e||u_0-\tilde u||^p.$ These moments contain gradient, momentum, and Gaussian terms; and are handled via the following Lemma. 
\begin{lemma}[Lemma 21, Kandasamy \& Nagaraj~\cite{kandasamy2024poissonmidpointmethodlangevin}]\label{ULMCauxiliary} Let $\Pi$ denote projection onto the position axis: $\Pi[u,v]^T = [u,0]^T.$ Let $\mtk = \sup_{0\leq i < k}\|\sum_{j=0}^i A(\frac{\eta(i-j)}{k})\Gamma(\ebk) Z_{t,j}\|_\Pi.$ Then the following inequalities are true. 
    \begin{align*}
        \|\tutki^+ - \utkt\| & \lesssim  \eta \|\vtkt\| + \eta^2 \|\nabla f(\utkt)\| + \mtk \\
        \e[\mtk^p] & \lesssim \exp(\frac{p\gamma\eta}{2})\gamma^{p/2}\eta^{3p/2}(d+\log k)^{p/2}.
    \end{align*}
\end{lemma}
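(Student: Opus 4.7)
The plan is to prove the two inequalities separately, as they are essentially independent.

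For the first inequality, I would simply unfold the definition
$\hat{\tilde X}_{tK+i} = A_{i\eta/K}\xt + G_{i\eta/K} b(\xt) + \sum_{j=0}^{i-1} A_{\eta(i-j-1)/K}\Gamma_{\eta/K}Y_{tK+j}$
and read off the position component. Using the explicit forms of $A_h$, $G_h$ and $b$, the position part of the first summand is $\tilde U_{tK} + \tfrac{1}{\gamma}(1-e^{-\gamma i\eta/K})\tilde V_{tK}$, and the position part of the second summand is $-\tfrac{1}{\gamma}\bigl(\tfrac{i\eta}{K} - \tfrac{1}{\gamma}(1-e^{-\gamma i\eta/K})\bigr)\nabla F(\tilde U_{tK})$. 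The position part of the Gaussian sum is at most $\mtk$ by definition of the supremum. Under the assumption $\gamma\eta < c_0$, the elementary estimates $\tfrac{1}{\gamma}(1-e^{-\gamma h}) \leq h$ and $\tfrac{1}{\gamma}\bigl(h-\tfrac{1}{\gamma}(1-e^{-\gamma h})\bigr) \leq h^2/2$ applied at $h = i\eta/K \leq \eta$ give the coefficient bounds $\eta$ and $\eta^2$, respectively. Triangle inequality concludes.

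For the moment bound, the key observation is that for every fixed $i < K$, the vector $S_i := \sum_{j=0}^{i} A_{\eta(i-j)/K}\Gamma_{\eta/K}Z_{tK+j}$ is centered Gaussian in $\bR^{2d}$, so its position projection $\pi S_i$ is a centered Gaussian in $\bR^d$ with some covariance $\Sigma_i$. The dominant contribution to $\Sigma_i$ comes from the momentum-to-position coupling: the position coordinate of $A_h \Gamma_{\eta/K}Y$ is $[\Gamma_{\eta/K}Y]_u + \tfrac{1}{\gamma}(1-e^{-\gamma h})[\Gamma_{\eta/K}Y]_v$, whose variance is dominated by the momentum term $\bigl(\tfrac{1}{\gamma}(1-e^{-\gamma h})\bigr)^2 \cdot (1-e^{-2\gamma\eta/K})$. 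Summing this over $j = 0,\dots,i$ with $h = \eta(i-j)/K$ and using Taylor expansion in $\gamma\eta/K$ (valid since $\gamma\eta < c_0$), I expect
$\|\Sigma_i\|_{\mathrm{op}} \lesssim e^{\gamma\eta}\gamma\eta^3$ and $\mathrm{Tr}(\Sigma_i) \lesssim e^{\gamma\eta}\gamma\eta^3 d$, uniformly in $i < K$.

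Given these covariance estimates, the final step is a standard Gaussian maximal inequality. Each $\|\pi S_i\|$ concentrates around $\sqrt{\mathrm{Tr}(\Sigma_i)}$ at scale $\sqrt{\|\Sigma_i\|_{\mathrm{op}}}$, so a union bound over the $K$ indices yields
$\p\bigl(\mtk \geq C e^{\gamma\eta/2}\sqrt{\gamma\eta^3}\bigl(\sqrt d + t\bigr)\bigr) \leq K e^{-t^2/2}$.
Integrating this sub-Gaussian tail (with threshold $t^2 \asymp \log K$ absorbing the union bound factor) produces $\e[\mtk^p] \lesssim e^{p\gamma\eta/2}\gamma^{p/2}\eta^{3p/2}(d+\log K)^{p/2}$, matching the claim. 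The main obstacle is the careful covariance computation: one must expand $\Gamma_{\eta/K}^2$ via its Taylor series in $\gamma\eta/K$ and sum the resulting $j$-dependent contributions to track how $\gamma$ and $\eta$ separately enter the final scaling — the $e^{p\gamma\eta/2}$ prefactor comes precisely from not being able to drop the $e^{-\gamma h}$ factors exactly. The tail argument itself is routine.
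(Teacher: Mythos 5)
This paper does not prove the lemma itself --- it is imported verbatim by citation (Lemma 21 of \cite{kandasamy2024poissonmidpointmethodlangevin}) --- and your argument is a correct reconstruction along exactly the expected lines: unfolding $\hat{\tilde X}_{tK+i}$ and using $\tfrac{1}{\gamma}(1-e^{-\gamma h})\le h$, $\tfrac{1}{\gamma}\bigl(h-\tfrac{1}{\gamma}(1-e^{-\gamma h})\bigr)\le h^2/2$ for the first bound, and a per-index Gaussian covariance estimate of order $e^{\gamma\eta}\gamma\eta^3$ (dominated by the momentum-to-position block) followed by a union bound over the $K$ indices and integration of the sub-Gaussian tail for the moment bound. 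The only caveats are cosmetic: the hidden constant is $p$-dependent (as the statement implicitly allows), and the $Z_{tK+j}$ in the displayed definition of $M_{t,K}$ should be read as the noise $Y_{tK+j}$ driving the cheap interpolant, which is how you used it.
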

The proof of the following Lemma is given in Section~\ref{sec:ULMC_coupling_proof}
\begin{lemma}\label{lem:ULMC_coupling}
Assume $\eta/k \lesssim \frac{1}{\kappa\sqrt{L}},$ and $\gamma\eta < c_0$ for sufficiently small $c_0 > 0.$ With $\gamma = c\sqrt L$ for some $c \geq 2$, the following is true. 
    \begin{align*}
        \mathcal W_2^2&(\law(X_{t,i+1}),\law({\tilde X_{t,i+1}})) \leq (1-\Omega(\tfrac{\alpha\eta}{\gamma k}))\W{X_{t,i}}{\txtki} +  \mathcal O\Big[\tfrac{\eta^7 L^4}{\alpha\gamma k}\e\|\vtkt\|^2 \\ & + \tfrac{\eta^9 L^{4}}{\alpha\gamma k}\e\|\nabla f(\utkt)\|^2 + \tfrac{\eta^8 L^4}{\alpha k}(d+\log k) + \tfrac{\eta^7 L^4}{\gamma^3 k}\e\|\vtkt\|^4 \\ & + \tfrac{\eta^{11} L^4}{\gamma^3 k}\e\|\nabla f(\utkt)\|^4 + \tfrac{\eta^9 L^4}{\gamma k}(d+\log k)^2 + \lambda_p[\tfrac{\eta^{3p+4}k^{p-1}L^{2p+2}}{\gamma^{p+2}}\e\|\vtkt\|^{2p+2} \\ & +\tfrac{\eta^{5p+6}k^{p-1}L^{2p+2}}{\gamma^{p+2}}\e\|\nabla f(\utkt)\|^{2p+2} + \tfrac{\eta^{4p+5}k^{p-1}L^{2p+2}}{\gamma}(d+\log k)^{p+1}\Big], 
    \end{align*} 
    Where $\lambda_p$ is a constant depending only on $p.$
\end{lemma}

\subsection{Finishing the proof}\label{ULMCmain_complete}
    Open up the recursion, summing up the constant terms as a geometric series. This gives 
    \begin{align*}
        W_2^2({\law(\xt)},&{\law(X_{t,0})}) \lesssim \exp\Big(\Omega(-\tfrac{\alpha \eta t}{\sqrt L})\Big)\W{\law(X_{0,0})}{\law(\tilde X_{0,0})} \\ & + \sum_{s=0}^t \Big[\tfrac{\eta^7 L^4}{\alpha\gamma}\e\|\vtkt\|^2 + \tfrac{\eta^9 L^{4}}{\alpha\gamma}\e\|\nabla f(\utkt)\|^2 \Big] + \tfrac{\eta^7 L^4 \gamma}{\alpha^2}(d+\log k) \\&
        + \sum_{s=0}^t\Big[\tfrac{\eta^7 L^4}{\gamma^3}\e\|\vtkt\|^4 + \tfrac{\eta^{11} L^4}{\gamma^3}\e\|\nabla f(\utkt)\|^4 \Big] + \tfrac{\eta^8 L^4}{\alpha}(d+\log k)^2 \\& + \sum_{s=0}^t \lambda_p \Big[\tfrac{\eta^{3p+4}k^{p}L^{2p+2}}{\gamma^{p+2}}\e\|\vtkt\|^{2p+2} +\tfrac{\eta^{5p+6}k^{p}L^{2p+2}}{\gamma^{p+2}}\e\|\nabla f(\utkt)\|^{2p+2}\Big] \\& + \lambda_p\tfrac{\eta^{4p+4}k^{p}L^{2p+2}}{\alpha}(d+\log k)^{p+1}
    \end{align*}
Note that $X_{0,0} = \tilde X_{0,0}$ by definition, so the first term is zero. The moments $\sum_{s=0}^t \e||\vtkt||^{2p}$ and $\sum_{s=0}^t \e||\nabla f(\utkt)||^{2p}$ are bounded the following Lemma. 
\begin{theorem}[Theorem 4, Kandasamy \& Nagaraj~\cite{kandasamy2024poissonmidpointmethodlangevin}]\label{ULMCgradients}
Fix $p \geq 1,$ and let $\mathcal S_{2p}(\nabla f) = \sum_{t=0}^T \e\|\nabla f(\utkt)\|^{2p}$. Let $\mathcal S_{2p}(V) = \sum_{t=0}^T \e\|\vtkt\|^{2p},$ and $\Psi_t = \tilde U_{t,0} + \frac{1}{\gamma}\tilde V_{t,0}.$ There exist constants $C_p,c_p,\bar{c}_p > 0$ such that whenever:
$\gamma \geq C_p \sqrt{L}$, $\alpha \gamma < c_p$, $\frac{\eta^{3p-1}T^{p-1}L^{2p}}{\gamma^{p+1}} < \bar{c}_p $, the following results hold:

\begin{align*}\mathcal{S}_{2p}(\nabla f) &\leq C_p \frac{\gamma^{2p-1}}{\eta}\left[\e \|\tilde{V}_{0,0}\|^{2p} + \e |(f(\Psi_0)-f(\Psi_T))^{+}|^p +1\right] + \\&\quad  C_p T\left[\tfrac{\gamma^{4p}}{L^p} + (\gamma\eta T)^{p-1}\gamma^{2p}\right](d+\log k)^p\end{align*}

\begin{align*}\mathcal{S}_{2p}(V) &\leq C_p \frac{1}{\gamma\eta}\left[\e \|\tilde{V}_{0,0}\|^{2p} + \e |(f(\Psi_0)-f(\Psi_T))^{+}|^p +1\right] \nonumber \\&\quad + C_p T\left[\tfrac{\gamma^{2p}}{L^p} + (\gamma\eta T)^{p-1}\right](d+\log k)^p\end{align*}
\end{theorem}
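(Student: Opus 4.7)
The plan is to run a Lyapunov argument based on $\Phi_t := F(\Psi_t) + \tfrac{a}{2}\|\vtkt\|^2$ for a small constant $a > 0$, exploiting the well-known fact that in continuous time $\Psi_t = \utkt + \vtkt/\gamma$ evolves as an overdamped Langevin diffusion with effective drift $-\gradf{U_t}/\gamma$ and diffusion $\sqrt{2}/\gamma$. This structure should survive discretization by the Poisson midpoint, up to randomization error that must be carefully tracked.

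First I would perform a single-step expansion of $\e[\Phi_{t+1}-\Phi_t \mid \utkt, \vtkt]$. Expanding $F(\Psi_{t+1})$ via $L$-smoothness and replacing $\gradf{\Psi_t}$ by $\gradf{\utkt}$ (at cost $\leq L\|\vtkt\|/\gamma$) gives expected drift $-\tfrac{\eta}{\gamma}\|\gradf{\utkt}\|^2$ plus bias and variance from the Poisson midpoint. Simultaneously, the friction in the velocity update yields $\e\|\tilde V_{(t+1)K}\|^2 \leq e^{-c\gamma\eta}\|\vtkt\|^2 + O\bigl(\tfrac{\eta^2}{\gamma^2}\|\gradf{\utkt}\|^2 + \tfrac{\eta d}{\gamma^2}\bigr)$. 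For an appropriate $a$ one obtains the one-step descent
\[
\e[\Phi_{t+1} \mid \utkt,\vtkt] \leq \Phi_t - \tfrac{c_1\eta}{\gamma}\|\gradf{\utkt}\|^2 - c_2\eta\|\vtkt\|^2 + O\Bigl(\tfrac{\eta}{\gamma^2}(d+\log K)\Bigr),
\]
where the $\log K$ factor enters through concentration of $M_{t,K}$ from Lemma~\ref{ULMCauxiliary}. Telescoping over $t=0,\ldots,T-1$ and using $\Phi_T \geq 0$ (with only the positive part of $F(\Psi_0)-F(\Psi_T)$ surviving) then yields the $p=1$ case of both bounds; the $\gamma/\eta$ scaling on $\mathcal S_2(\nabla F)$ is exactly the ratio inverse to the descent coefficient, while the $(\gamma\eta)^{-1}$ scaling on $\mathcal S_2(V)$ is the geometric sum of the friction contraction $e^{-c\gamma\eta}$.

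For $p \geq 2$ I would iterate this recursion at the $p$-th moment level, equivalently applying the same argument to $\Phi_t^p$ (or an analogous polynomial Lyapunov). Young's inequality and standard moment algebra control cross-terms, and the hypotheses $\gamma \geq C_p \sqrt L$, $\alpha\gamma < c_p$, and $\eta^{3p-1}T^{p-1}L^{2p}/\gamma^{p+1} < \bar c_p$ are precisely the regime where the Poisson midpoint noise---which injects factors of $K^{p-1}$ at the $2p$-th moment because $\e[(KH_{i,t})^{2p}] \lesssim K^{p-1}$---remains subdominant to the descent. The residual contributions then collect into the stated $T[\gamma^{4p}/L^p + (\gamma\eta T)^{p-1}\gamma^{2p}](d+\log K)^p$ term for $\mathcal S_{2p}(\nabla F)$ and its analog for $\mathcal S_{2p}(V)$, with the $(\gamma\eta T)^{p-1}$ arising from iterating cumulative noise against stationary noise in the $p$-th moment recursion.

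The main obstacle I expect is the higher-moment control of the Poisson midpoint perturbation. The Bernoulli weights $KH_{i,t}$ are heavy-tailed and scale the $p$-th moment of each one-step increment by $K^{p-1}$; without the smallness hypothesis on $\eta^{3p-1}T^{p-1}L^{2p}/\gamma^{p+1}$ these terms would dominate the descent for large $p$ or small $\gamma$. Carefully allocating these factors between the descent and the noise residual, and separating the stationary $\gamma^{2p}/L^p$ contribution from the cumulative $(\gamma\eta T)^{p-1}$ contribution, is the most delicate bookkeeping and is where the precise form of the bound comes from.
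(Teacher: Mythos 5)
You should note first that the paper does not prove this statement at all: it is imported verbatim as Theorem 4 of \cite{kandasamy2024poissonmidpointmethodlangevin}, so there is no in-paper proof to compare against, and your sketch has to be judged on its own. The overall Lyapunov idea you propose — telescoping $F(\Psi_t)$, since $\Psi = \tilde U + \tilde V/\gamma$ behaves like an overdamped coordinate with drift $-\nabla F(\tilde U)/\gamma$, combined with a friction recursion for $\|\vtkt\|^2$ — is indeed the natural mechanism and is consistent with the form of the bound (this is exactly why $\e\|\tilde V_0\|^{2p}$ and $\e|(F(\Psi_0)-F(\Psi_T))^{+}|^{p}$ appear). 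Your $p=1$ step is essentially right up to small slips: the velocity descent per batch should be of order $\gamma\eta\|\vtkt\|^2$ (contraction $e^{-c\gamma\eta}$), not $\eta\|\vtkt\|^2$, which is precisely what produces the $(\gamma\eta)^{-1}$ prefactor you later invoke.

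The genuine gap is in how you pass to $p\geq 2$. Running the one-step descent on $\Phi_t^p$ forces you to control cross terms of the form $\e[\Phi_t^{p-1}\cdot(\text{one-step noise and bias})]$, i.e.\ a priori bounds on intermediate moments of $F(\Psi_t)$ and $\|\vtkt\|^{2(p-1)}$ along the trajectory — but these are essentially the quantities being bounded, so the argument as sketched is circular; moreover, telescoping $\e[\Phi_t^p]$ cannot produce a bound featuring $\e|(F(\Psi_0)-F(\Psi_T))^{+}|^{p}$. The structure of the statement (that term, the $T^{p-1}$ in the hypothesis $\eta^{3p-1}T^{p-1}L^{2p}/\gamma^{p+1}<\bar c_p$, and the $(\gamma\eta T)^{p-1}$ in the conclusion) indicates that the higher moments are obtained by keeping the $p=1$ inequality pathwise, raising it to the $p$-th power, and using $\sum_t a_t^p \leq (\sum_t a_t)^p$ together with H\"older/Jensen over the $T$ noise increments — a different mechanism from a $p$-th-moment recursion, and one your sketch does not supply. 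Separately, your heuristic $\e[(KH_{i,t})^{2p}]\lesssim K^{p-1}$ is false ($\e[(KH_{i,t})^{2p}]=K^{2p-1}$); the reason no polynomial factor of $K$ survives is not a moment bound on a single $KH_{i,t}$, but that the midpoint corrections enter through $N_t=\sum_i H_{i,t}$ (Binomial$(K,1/K)$, all moments $O_p(1)$) multiplied by matrices of operator norm $O(\eta^2/K)$, with the $\log K$ coming from the supremum of in-batch Gaussian partial sums as in Lemma~\ref{ULMCauxiliary}. As written, the $p\geq 2$ bookkeeping would not go through.
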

\begin{remark}\label{suboptimality_ulmc}
    We believe these bounds are suboptimal. When $V$ is a standard Gaussian random vector, we have $\e||V||^{2p} = d^p.$ Similarly, when $f$ is L-smooth, it can be shown that \[\int_{\sR^d}||\nabla f(x)||^{2p} d\pi(x) \leq \prod_{n=1}^p(2n-1) \cdot (Ld)^p.\]
    This is Lemma \ref{smooth_gradient_bounds}, and is a generalization of Lemma 11 from Vempala \& Wibisono~\cite{vempala2022rapidconvergenceunadjustedlangevin}.
    We thus believe the dominant term in both bounds should be $\mathcal O(Td^p),$ whereas what we have is $\mathcal O(\eta^{p-1}T^p d^p).$ When $T\asymp 1/\alpha\eta,$ this is suboptimal in $\kappa$ dependence. 
\end{remark}
We substitute the bounds from \ref{ULMCgradients}, ignoring lower order terms via the assumption $\gamma\eta <c_0.$ 
\begin{align*}\label{ulmc_complete}
    W_2^2(&{\law(\xt)},{\law(X_{t,0})}) \lesssim \tfrac{\eta^6 L^{4}}{\alpha\gamma^2}\left[\e \|\tilde{V}_0\|^{2} + \e |(f(\Psi_0)-f(\Psi_T))^{+} +1\right] \\ & + \tfrac{\eta^7 L^{4}}{\alpha\gamma}t\left[\tfrac{\gamma^{2}}{L} + 1\right](d+\log k) + \tfrac{\eta^6 L^4}{\gamma^4}\left[\e \|\tilde{V}_0\|^{4} + \e |(f(\Psi_0)-f(\Psi_T))^{+}|^2 +1\right] \\ & 
    + \tfrac{\eta^7 L^4}{\gamma^3}t\left[\tfrac{\gamma^{4}}{L^2} + (\gamma\eta t)\right](d+\log k)^2 \\ &
    + \lambda_p\tfrac{\eta^{3p+3}k^{p}L^{2p+2}}{\gamma^{p+3}}\left[\e \|\tilde{V}_0\|^{2p+2} + \e |(f(\Psi_0)-f(\Psi_T))^{+}|^{p+1} +1\right] \\ &
    + \lambda_p \tfrac{\eta^{3p+4}k^{p}L^{2p+2}}{\gamma^{p+2}}t\left[\tfrac{\gamma^{2p+2}}{L^{p+1}} + (\gamma\eta t)^{p}\right](d+\log k)^{p+1} \\ & +\tfrac{\eta^7 L^4 \gamma}{\alpha^2}(d+\log k)+\tfrac{\eta^8 L^4}{\alpha}(d+\log k)^2+\lambda_p\tfrac{\eta^{4p+4}k^{p}L^{2p+2}}{\alpha}(d+\log k)^{p+1}.
\end{align*}
\section{Deferred Proofs for ULMC}
\subsection{Proof of Proposition~\ref{ULMC_coupling}}
\label{sec:prop_ULMC_coupling_proof}
\begin{proof}
Let $\beta = \sqrt{\frac{\eta k}{\gamma}}L\|u_0-u^+\|.$ By Proposition \ref{spectralNormBounds} we have $\|G_\mathcal M(\ebk)^T \Gamma_\mathcal M(\ebk)^{-2}G_\mathcal M(\ebk)\| \leq \frac{\eta}{\gamma k},$ and we know $H_{t,i} \leq 1$ since it is a Bernoulli. It follows that $\|S_{t,i}\|^2 \leq \beta^2$.  Now let $\nu = \textrm{Tr}(\e[S_{t,i}S_{t,i}^T|\g]).$ Since $\e[(H_{t,i}-1/k)^2] \leq 1/k,$ it follows that $\nu \leq \frac{\eta L^2}{\gamma}\|u_0-u^+\|^2.$ Applying Lemma \ref{new_CLT} gives 
    \begin{align*}
        \e[\|\z+S_{t,i}-\y\|^2|\g] & \lesssim \frac{\eta^2 L^4}{\gamma^2}\|u_0-u^+\|^4 + \ib \cdot \frac{\eta L^2}{\gamma}\|u_0-u^+\|^2 \\
        & \lesssim \frac{\eta^2 L^4}{\gamma^2}\e\|u_0-u^+\|^4 + \frac{5^p\eta^{p+1} k^{p} L^{2p+2}}{\gamma^{p+1}}\e\|u_0-u^+\|^{2p+2}. 
    \end{align*}
    In the last line, we have used $\ib \leq (5\beta^2)^{p} = \frac{5^p\eta^p k^p L^{2p}}{\gamma^p}\|u_0 -u^+\|^{2p}.$ Multiplying this inequality by $\frac{\eta}{\gamma k}$ finishes the proof. 
\end{proof}

\subsection{Proof of Lemma~\ref{lem:ULMC_coupling}}
\label{sec:ULMC_coupling_proof}
We will use the following bounds in the proof. 
\begin{lemma}[Lemmas 18/19, Kandasamy \& Nagaraj~\cite{kandasamy2024poissonmidpointmethodlangevin}]\label{ULMCauxiliary_2}
    Let $\Pi:\mathbb R^{2d} \to \mathbb R^{2d}$ denote projection onto the first $d$ coordinates. Let $G(h)$ and $A(h)$ be as defined in the update rule for underdamped Langevin Monte-Carlo in Section \ref{plmc_definitions}. Let $\tutki^+, \tvtki^+$ and $\tutki, \tvtki$ denote the midpoints and iterates respectively of Poisson-ULMC, as defined in Section \ref{plmc_definitions}. Let $\|\cdot\|$ denote the operator norm of a matrix, and $\|\cdot\|_{\mathbb R^n}$ denote the Euclidean norm in dimension $n$. Then the following inequalities are true.   

    \[\Bigg\|\Pi A_\mathcal M\Big({\frac{j\eta}{k}}\Big) G_\mathcal M\Big(\ebk\Big)\Bigg\| \lesssim \frac{\eta^2}{k},\]  
    \[\Bigg\|\begin{bmatrix}
        \tutki^+-\tutki\\ \tvtki^+-\tvtki
    \end{bmatrix}\Bigg\|_{\mathbb R^{2d}} \leq \sum_{j=0}^{i-1} k H_{t,j} \Bigg\|A\Big(\frac{(i-j-1)\eta}{k}\Big) G\Big(\ebk\Big)\begin{bmatrix}
        \nabla f(\utkt)-\nabla f(\tutki^+) \\ 0
    \end{bmatrix}\Bigg\|_{\mathbb R^{2d}}.\]

\end{lemma}

\begin{proof}
Recall the definition of $\mathcal G.$
\[\mathcal G = \Big\{ \xt = y_0 = \begin{bmatrix}
        u_0\\w_0
    \end{bmatrix}, \txtki = y = \begin{bmatrix}
        \tilde u \\ \tilde w
    \end{bmatrix}, \xpi = y^+ = \begin{bmatrix}
        u^+ \\ w^+
    \end{bmatrix}, X_{t,i} = x = \begin{bmatrix}
        u\\w
    \end{bmatrix} \Big\}.\]
By definition, conditioned on $\g$, we have 
\[X_{t,i+1} = A_\mathcal M\Big(\ebk\Big)x + \gm\Big(\ebk\Big)\begin{bmatrix}
    -\nabla f(u) \\ 0
\end{bmatrix} + \Gm \y,\]
\begin{align*}
    \tilde X_{t,i+1} = \am \Big(\ebk\Big)y & + \gm\Big(\ebk\Big) \begin{bmatrix}
        -\nabla f(\tilde u)\\0
    \end{bmatrix} + \Gamma_\mathcal M\Big(\frac{\eta}{k}\Big) Z_{t,i} \\ & + k H_{t,i}\cdot G_\mathcal M\Big(\frac{\eta}{k}\Big)
    \begin{bmatrix}
    \nabla f(u_0) - \nabla f(u^+)\\ 0
    \end{bmatrix}
\end{align*}
Conditioned on $\g,$ we couple $\z, H_{t,i}$ and $\y$ as in Lemma \ref{ULMC_coupling}. This allows us to define $(X_{t,i+1},\tilde X_{t,i+1})$ using the equations above and gives a conditional coupling of $(\z, H_{t,i}, \y, X_{t,i+1},\tilde X_{t,i+1})$ given $\g$. \\ 
We produce an unconditional coupling as follows. Couple $X_{t,i}$ and $\txtki$ optimally w.r.t. $W_2$, then sample $\xpi$ and $\xt$ jointly conditioned on $\txtki.$ Conditioned on $(X_{t,i},\txtki,\xpi,\xt)$ we then sample $(\z,\y,H_{t,i},X_{t,i+1}, \tilde X_{t,i+1})$ from the conditional coupling described above. We now take the expectation in Proposition \ref{ULMCamgm}, after substituting the bound in Proposition \ref{ULMC_coupling}. This gives 
\begin{align*}
    \mathcal W_2^2({X_{t,i+1}},&{\tilde X_{t,i+1}}) \leq (1-\Omega(\frac{\alpha \eta}{\gamma k}))\W{X_{t,i}}{\txtki} + E_{t,i} \text{, where} \\ 
    E_{t,i} & \lesssim \frac{\eta L^{2}}{\alpha\gamma k}\e\|\tutki^+ - {\tilde U}_{t,i}\|^2 + \frac{\eta^3L^4}{\gamma^3 k}\e\|\tutki^+ - \tilde U_{t,0}\|^4 \\ & + \frac{5^p\eta^{p+2} k^{p-1} L^{2p+2}}{\gamma^{p+2}}\e\|\tutki^+ - \tilde U_{t,0}\|^{2p+2}.
\end{align*}
We now bound each of the error terms individually. Recall $N_t:= \sum_{i=0}^{k-1}H_{t,i}$ and let $M_{t,k}$ be as defined in Lemma~\ref{ULMCauxiliary}. 
\begin{align*}
    \frac{\eta L^{2}}{\alpha\gamma k}\e\|\tutki^+ - {\tilde U}_{t,i}\|^2 & \leq \frac{\eta L^{2}}{\alpha\gamma k}\e\Bigg[\sum_{j=0}^{i-1} k H_{t,j}\Bigg\|A_\mathcal M\Big(\frac{(i-1-j)\eta}{k}\Big)G_\mathcal M\Big(\ebk\Big)\begin{bmatrix}
        \nabla f(\utkt) - \nabla f(\tutki^+)\\0
    \end{bmatrix}\Bigg\|\Bigg]^2 \\
    & \lesssim \frac{\eta L^{2}}{\alpha\gamma k}\e \Big[\sum_{j=0}^{i-1} H_{t,j} \cdot {\eta^2L} \|{\tilde U}_{t,j}^+ - \tilde U_{t,0}\|\Big]^2 \\ &
    \lesssim \frac{\eta^5 L^4}{\alpha \gamma k} \e\Big[N_t^2\sup_{0 \leq j< k}\|{\tilde U}_{t,j}^+ - \tilde U_{t,0}\|^2\Big] \\& \lesssim \frac{\eta^5 L^4}{\alpha \gamma k} \e\Big[\sup_{0 \leq j< k}\|{\tilde U}_{t,j}^+ - \tilde U_{t,0}\|^2\Big]  \\ & 
    \lesssim \frac{\eta^7 L^4}{\alpha \gamma k}\e\|\vtkt\|^2 + \frac{\eta^9 L^4}{\alpha\gamma k}\e\|\nabla f(\utkt)\|^2 + \frac{\eta^5 L^4}{\alpha\gamma k}\e[\mtk^2] \\ & 
    \lesssim \frac{\eta^7 L^4}{\alpha \gamma k}\e\|\vtkt\|^2 + \frac{\eta^9 L^4}{\alpha\gamma k}\e\|\nabla f(\utkt)\|^2 + \frac{\eta^8 L^4}{\alpha k}(d+\log k).
\end{align*}
In the first inequality, we have used item 2 of Lemma \ref{ULMCauxiliary_2}. In the second, we have used item 1 of Lemma \ref{ULMCauxiliary_2} and Assumption \ref{well_conditioned_F}. In the fourth we have used that $N_t$ is independent of the iterates, and $\e[N_t]^2 \lesssim 1.$ In the fifth and last inequalities, we have used items 1 and 2 of Lemma \ref{ULMCauxiliary} respectively, with the assumption that $\gamma\eta$ is bounded. 
\begin{align*}
    \frac{\eta^3L^4}{\gamma^3 k}\e\|{\tilde U}_{t,i}^+ - & \tilde U_{t,0}\|^4  \lesssim \frac{\eta^7 L^4}{\gamma^3 k}\e\|\vtkt\|^4 + \frac{\eta^{11}L^4}{\gamma^3 k}\e\|\nabla f(\utkt)\|^4 + \frac{\eta^3 L^4}{\gamma^3 k}\e[\mtk^4] \\& \lesssim \frac{\eta^7 L^4}{\gamma^3 k}\e\|\vtkt\|^4 + \frac{\eta^{11}L^4}{\gamma^3 k}\e\|\nabla f(\utkt)\|^4 + \frac{\eta^9 L^4}{\gamma k}(d+\log k)^2.
\end{align*}
The above inequality follows from items 1 and 2 of Lemma \ref{ULMCauxiliary}, with the assumption that $\gamma\eta$ is bounded. Now, for some constant $\lambda_p$ depending only on $p$:
\begin{align*}
    \frac{5^p\eta^{p+2} k^{p-1} L^{2p+2}}{\gamma^{p+2}}&\e\|{\tilde U}_{t,i}^+ - \tilde U_{t,0}\|^{2p+2} \leq \lambda_p' \Big[\frac{\eta^{3p+4}k^{p-1}L^{2p+2}}{\gamma^{p+2}}\e\|\vtkt\|^{2p+2} \\ & +\frac{\eta^{5p+6}k^{p-1}L^{2p+2}}{\gamma^{p+2}}\e\|\nabla f(\utkt)\|^{2p+2} + \frac{\eta^{p+2}k^{p-1}L^{2p+2}}{\gamma^{p+2}}\e[\mtk^{2p+2}]\Big] \\ & \leq \lambda_p\Big[\frac{\eta^{3p+4}k^{p-1}L^{2p+2}}{\gamma^{p+2}}\e\|\vtkt\|^{2p+2} +\frac{\eta^{5p+6}k^{p-1}L^{2p+2}}{\gamma^{p+2}}\e\|\nabla f(\utkt)\|^{2p+2} \\ & +\frac{\eta^{4p+5}k^{p-1}L^{2p+2}}{\gamma}(d+\log k)^{p+1}\Big].
\end{align*}
As before, the above inequality follows from items 1 and 2 of Lemma \ref{ULMCauxiliary}.
\end{proof}

\section{Proof of Corollary~\ref{ULMCcomplexity}}
\label{sec:ULMCcomplexity_proof}
\begin{proof}
    Triangle inequality on $\mathcal W_2$ gives 
    \[\W{\tilde U_{N,0}}{\pi} \lesssim \W{\tilde U_{N,0}}{U_{N,0}}+\W{U_{N,0}}{\pi} \leq \W{\tilde X_{N,0}}{X_{N,0}}+\W{U_{N,0}}{\pi}.\] Under the conditions of the Corollary, we show that both these terms are $\lesssim \frac{\epsilon^2 d}{\alpha}.$ Recall the following Theorem for the convergence of Underdamped LMC. 
    \begin{theorem}[Corollary of Theorem 2, Dalalyan \& Riou-Durand~\cite{dalalyan2020sampling}]
        Let $f$ satisfy Assumption \ref{well_conditioned_F}. In addition, let the initial condition of ULMC be drawn from the product distribution $\mu = \mathcal N(0,\vI_d) \otimes \nu_0.$ For $\gamma = c\sqrt L$ and step-size $h = \frac{0.94\epsilon\sqrt{\alpha}}{ L\sqrt{2}},$ the distribution $\nu_k$ of the kth iterate of the ULMC algorithm satisfies $\W{\nu_k}{\pi} \leq \frac{\epsilon^2d}{\alpha}$ for $k \geq c_3\frac{\gamma}{\alpha h}\log \frac{\sqrt {2\alpha} \mathcal W_2(\nu_0,\pi)}{\epsilon \sqrt d}$ and some absolute constant $c_3.$        
    \end{theorem}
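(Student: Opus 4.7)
The plan is to derive this corollary as a direct specialization of Theorem~2 of \cite{dalalyan2020sampling}. Their theorem establishes, for ULMC iterates $\nu_k$ with step-size $h$ and friction $\gamma$, a bound of the schematic form
\[
W_2(\nu_k, \pi) \le \rho^k\, W_2(\nu_0, \pi) + B(h, \gamma, L, \alpha, d),
\]
with contraction rate $\rho \approx 1 - c_\rho\,\alpha h/\gamma$ and a per-step discretization bias $B$ that, after the substitution $\gamma \asymp \sqrt{L}$, scales as $h\sqrt{Ld/\alpha}$ up to absolute constants. The warm-start assumption $\mu = \mathcal{N}(0,\vI_d)\otimes \nu_0$ is exactly what is needed to kill the extraneous initialization term for the velocity marginal in their bound, since $\mathcal{N}(0,\vI_d)$ is the stationary distribution of the velocity under $\pi(u,v)\propto \exp(-F(u)-\tfrac12\|v\|^2)$.

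The derivation is then a routine balance of the two terms. First, I would substitute $\gamma = c\sqrt{L}$ to match the underdamped scaling used throughout the paper; this fixes the $\kappa$-dependence of $B$ and yields contraction rate $\log(1/\rho) \asymp \alpha h/\sqrt{L}$. Second, I would choose the step-size $h$ so that $B \leq \tfrac12\, \epsilon\sqrt{d/\alpha}$; tracking the explicit constants in the bias bound of \cite{dalalyan2020sampling} yields the specified value $h = 0.94\,\epsilon\sqrt{\alpha}/(L\sqrt{2})$, with the numerical constant $0.94$ coming directly from their calculation. Third, I would pick $k$ large enough that $\rho^k\, W_2(\nu_0,\pi) \leq \tfrac12\, \epsilon\sqrt{d/\alpha}$; using $\log(1/\rho) \asymp \alpha h/\gamma$ this gives exactly
\[
k \;\geq\; c_3\,\frac{\gamma}{\alpha h}\,\log\frac{\sqrt{2\alpha}\,W_2(\nu_0,\pi)}{\epsilon\sqrt{d}},
\]
for a suitable absolute constant $c_3$ absorbing the factors from $c_\rho$. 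Adding the two halves yields $W_2(\nu_k,\pi) \leq \epsilon\sqrt{d/\alpha}$, i.e., $W_2^2(\nu_k,\pi) \leq \epsilon^2 d/\alpha$, as stated (recalling that $\W{\cdot}{\cdot}$ in the paper's notation denotes $W_2^2$).

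Because this is a specialization of a previously established convergence theorem, there is no new analytical obstacle and the only step requiring care is constant-tracking. The mildly delicate part is verifying that the specific numeric choice $h = 0.94\,\epsilon\sqrt{\alpha}/(L\sqrt{2})$ is exactly what makes the bias term $B$ tight against $\tfrac12\,\epsilon\sqrt{d/\alpha}$ in the inequality coming out of \cite{dalalyan2020sampling}; this is a direct plug-in once the relevant form of their Theorem~2 (with $\gamma = c\sqrt{L}$ substituted) is at hand.
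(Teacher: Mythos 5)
The paper does not actually prove this statement: it is quoted verbatim as a corollary of Theorem~2 of \cite{dalalyan2020sampling}, and your two-term decomposition (contraction of the initial error plus a per-step discretization bias, then balancing by choosing $h$ and $k$) is exactly the intended derivation, including the role of the warm start $\mathcal N(0,\vI_d)\otimes\nu_0$ for the velocity marginal and the fact that $\gamma=c\sqrt L$, $c\ge 2$, satisfies the friction condition of that theorem. So the approach is right and matches how the result is meant to be obtained.

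One quantitative slip is worth fixing, because constant/parameter tracking is the entire content of this corollary: you claim that after substituting $\gamma\asymp\sqrt L$ the bias term scales as $h\sqrt{Ld/\alpha}$. In Theorem~2 of \cite{dalalyan2020sampling} the discretization term is of order $h\,\kappa\sqrt d$, i.e.\ roughly $\tfrac{\sqrt2}{0.94}\,\tfrac{hL}{\alpha}\sqrt d$ — larger than your expression by a factor $\sqrt\kappa$. It is precisely this scaling that makes $h=\tfrac{0.94\,\epsilon\sqrt\alpha}{L\sqrt2}$ the choice for which the bias equals $\epsilon\sqrt{d/\alpha}$, and which then gives $k\asymp\tfrac{\gamma}{\alpha h}\asymp\tfrac{\kappa^{3/2}}{\epsilon}$, consistent with the known ULMC complexity. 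Balancing your claimed $h\sqrt{Ld/\alpha}$ against $\epsilon\sqrt{d/\alpha}$ would instead suggest $h\asymp\epsilon/\sqrt L$ and a complexity of order $\kappa/\epsilon$, contradicting both the stated step size and the $\kappa^{3/2}$ rate. Since you ultimately defer to the explicit constants in \cite{dalalyan2020sampling} for the value of $h$, the argument goes through once the correct bias expression is used; also note that the $\sqrt{2\alpha}$ inside the logarithm comes from the constant prefactor (from the twisted-norm equivalence) multiplying the contraction term in their bound, which your schematic $\rho^k W_2(\nu_0,\pi)$ omits but which is harmlessly absorbed there.
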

    With $k$ defined as in the Corollary we have $\ebk \lesssim \frac{\epsilon\sqrt{\alpha}}{L}.$ Note that the Theorem above is valid with an inequality $h \leq \frac{0.94\epsilon\sqrt{\alpha}}{ L\sqrt{2}}$ rather than equality, so we get $\W{U_{N,0}}{\pi} \leq \frac{\epsilon^2d}{\alpha}$ for $N \geq \frac{c_3 \gamma}{\alpha h}\log \frac{\sqrt {2\alpha} W_2(\nu_0,\pi)}{\epsilon \sqrt d}.$ It remains to be shown that $\W{\tilde X_{N,0}}{X_{N,0}} \lesssim \frac{\epsilon^2 d}{\alpha}.$ 
    Let $n$ be a natural number. Under our assumptions on $V_{0,0}$ and $U_{0,0}$, we have $\e||V_0||^{2n} = d^n$ and 
    \begin{align*}
        (f(\Psi_0)-f(\Psi_T))^{+} & \leq f(\Psi_0) - f(x^*) \\
            & \leq L||\Psi_0 - x^*||^2 \\& 
            \lesssim L||U_{0,0}-x^*||^2 + \frac{L}{\gamma^2}||V_{0,0}||^2
    \end{align*} 
    Under our assumptions, we thus get $\e|(f(\Psi_0)-f(\Psi_T))^{+}|^n \lesssim d^n.$ Moreover, we have $\log k \asymp \max(0, \log \frac{\eta L}{\epsilon\sqrt\alpha})\lesssim \log \frac{\kappa}{\epsilon}$ under the assumption that $\gamma\eta < c_0.$ 
    Now let $L_2 = c_2 \log \frac{\sqrt {2\alpha} \mathcal W_2(\nu_0,\pi)}{\epsilon \sqrt d},$ $L_3 = \log \frac{\kappa}{\epsilon}$ and apply Theorem \ref{ULMCmain} with $N$ as above, and 
    \begin{align*}
        \eta & \leq \min\Big(\tfrac{\epsilon^{1/3}}{\sqrt L},\tfrac{\epsilon^{1/3}}{\kappa^{1/6}L_2^{1/6}\sqrt L}, \tfrac{\epsilon^{1/3}\kappa^{1/6}}{d^{1/6}\sqrt L},\tfrac{\epsilon^{1/3}}{\kappa^{1/6}d^{1/6}L_2^{1/3}\sqrt L}, \tfrac{\epsilon^{\frac{p+2}{4p+3}}}{\kappa^{\tfrac{p/2-1}{4p+3}}d^{\tfrac{p}{4p+3}}\sqrt L} \\ & \tfrac{\epsilon^{\tfrac{p+2}{4p+3}}}{\kappa^{\tfrac{3p}{8p+6}}d^{\tfrac{p}{4p+3}}L_2^{\tfrac{p+1}{4p+3}}\sqrt L}, \tfrac{\epsilon^{1/3}d^{1/6}\kappa^{1/6}}{L_3^{1/6}\sqrt L},\tfrac{\epsilon^{1/3}d^{1/6}}{\kappa^{1/6}L_3^{1/3}\sqrt L},\tfrac{\epsilon^{\tfrac{p+2}{4p+3}}d^{\tfrac{1}{4p+3}}}{\kappa^{\tfrac{3p}{8p+6}}L_2^{\tfrac{p+1}{4p+3}}L_3^{\tfrac{p+1}{4p+3}}\sqrt L}\Big).
    \end{align*}
    Our assumption on $\epsilon$ is sufficient to ensure that the conditions of Theorem \ref{ULMCmain} are satisfied with $\eta$ as above. This gives $\W{\tilde X_{N,0}}{X_{N,0}}\leq \frac{\epsilon^2d}{\alpha},$ with $N = L_3\gamma(\alpha\eta)^{-1}$ as desired. 
\end{proof}

\section{Technical Results for ULMC}
\subsection{Proof of Proposition~\ref{ULMCamgm}}
\label{sec:ULMCamgmproof}

The following proposition provides useful bounds on the operator norms of $\Gamma_\mathcal M$ and $\gm$ based on Taylor series expansion. We refer to Section~\ref{sec:spectralNormBoundproof} for its proof.
\begin{proposition}\label{spectralNormBounds} Let $\|\cdot\|$ denote the operator norm of a matrix, and $\|\cdot\|_{\mathbb R^n}$ denote the Euclidean norm in dimension $n$. Let $p$ and $q$ denote arbitrary points in $\mathbb R^{d}$. Assume $\gamma h <c_0$ for some sufficiently small constant $c_0>0,$ and Assumption \ref{well_conditioned_F}. Then the following inequalities are true. 
    \begin{enumerate}
        \item $\|\Gamma_\mathcal M(h)\|^2 \lesssim \frac{h}{\gamma}.$
        \item $\|G_\mathcal M(h)^T \Gamma_\mathcal M(h)^{-2}G_\mathcal M(h)\| \leq \frac{h}{\gamma}.$ 
        \item $\Big\|G_\mathcal M(\eta) \begin{bmatrix}
            \nabla f(p) - \nabla f(q)\\0
        \end{bmatrix}\Big\|_{\mathbb R^{2d}} \lesssim \frac{hL}{\gamma}\|p-q\|_{\mathbb R^d}.$
    \end{enumerate}
\end{proposition}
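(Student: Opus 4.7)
The proof plan is to handle all three operator-norm bounds by direct Taylor expansion of $e^{-\gamma h}$ and $e^{-2\gamma h}$ in the small parameter $\gamma h$. Because every nonzero block of $G'_h$ and $\Gamma'^2_h$ is a scalar multiple of $\vI_d$, the operator-norm problems tensor-split: one can write $\Gamma'^2_h = M_\Gamma \otimes \vI_d$ for an explicit $2\times 2$ symmetric positive matrix $M_\Gamma$, and similarly factor $G'_h$ through a ``column vector'' $(g_1,g_2) \in \mathbb{R}^2$. All the estimates then reduce to computations on $2\times 2$ matrices.

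For \textbf{Part 1}, I would expand the three scalar entries of $\gamma^2 M_\Gamma$. The diagonal entries come out to $4\gamma h + O((\gamma h)^3)$ and $8\gamma h - 4(\gamma h)^2 + O((\gamma h)^3)$, while the off-diagonal entry $2\gamma h - (1 - e^{-2\gamma h})$ telescopes to $2(\gamma h)^2 + O((\gamma h)^3)$---one full power of $\gamma h$ smaller than the diagonals. Dividing through by $\gamma^2$, the diagonals are $\Theta(h/\gamma)$ and the off-diagonal is $O(h^2) = O(\gamma h) \cdot O(h/\gamma)$, dominated under $\gamma h < c_0$. Hence $\|\Gamma'_h\|^2 = \|M_\Gamma\| \lesssim h/\gamma$. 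For \textbf{Part 2}, set $g_1 = (\gamma h - (1 - e^{-\gamma h}))/\gamma^2$ and $g_2 = (\gamma h + (1 - e^{-\gamma h}))/\gamma^2$, so that $g_1 = h^2/2 + O(\gamma h^3)$ and $g_2 = 2h/\gamma + O(h^2)$. Since the second column of $G'_h$ is zero, the matrix $G_h'^T \Gamma_h'^{-2} G_h'$ acts nontrivially only on the first $d$-block as multiplication by the scalar $Q := (g_1,g_2) M_\Gamma^{-1} (g_1,g_2)^T$. Inverting the explicit $M_\Gamma$ from Part 1 gives diagonal entries of order $\gamma/h$ and off-diagonal of order $\gamma^2$, so the dominant contribution to $Q$ is $g_2^2 \cdot (M_\Gamma^{-1})_{22} \approx (2h/\gamma)^2 \cdot \gamma/(8h) = h/(2\gamma)$, with the remaining terms smaller by a factor of at least $(\gamma h)^2$; choosing $c_0$ small then yields $Q \leq h/\gamma$.

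For \textbf{Part 3}, since $b$ vanishes in the momentum coordinate, $G'_h[b(p) - b(q)]$ is simply the ``column vector'' $(g_1,g_2)$ tensored with $\nabla F(q_u) - \nabla F(p_u)$, so
\[\|G'_h[b(p) - b(q)]\|_{\mathbb{R}^{2d}}^2 = (g_1^2 + g_2^2)\|\nabla F(p_u) - \nabla F(q_u)\|^2 \leq (g_1^2 + g_2^2)\, L^2 \|p_u - q_u\|^2\]
by Assumption~\ref{well_conditioned_F}, and the Taylor estimates from Part 2 give $\sqrt{g_1^2 + g_2^2} \leq 2h/\gamma + O(h^2) \lesssim h/\gamma$, which concludes the bound. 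The only real obstacle throughout is careful Taylor bookkeeping---ensuring that subleading corrections to $g_1, g_2$ and the small off-diagonal entry of $M_\Gamma$ never contaminate the target $h/\gamma$ scaling. Part 2 is the most delicate, because two separate expansions combine inside a quadratic form, but everything follows cleanly under $\gamma h < c_0$.
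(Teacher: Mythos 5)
Your overall strategy --- reducing each item to a $2\times 2$ scalar computation and Taylor expanding in $\gamma h$ --- is essentially the paper's own (it Taylor-expands the entries of $G_h'$ and the spectrum of $\Gamma_h'^2$), and your Parts 1 and 3 go through: an entrywise/row-sum bound on the reduced matrix gives $\|\Gamma_h'\|^2\lesssim h/\gamma$ just as well as the paper's eigenvalue expansion, and your Part 3 is the same argument as theirs.

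The gap is in Part 2, and it stems from taking the displayed formula for $\Gamma_h'^2$ at face value. Since $\tilde X'=M\tilde X$, the transformed noise covariance is $\Gamma_h'^2=M\Gamma_h^2M^T$, whose $(1,1)$ block is the position--position covariance $\frac{2}{\gamma}\bigl(h-\frac{2}{\gamma}(1-e^{-\gamma h})+\frac{1}{2\gamma}(1-e^{-2\gamma h})\bigr)\vI_d=\bigl(\tfrac{2}{3}\gamma h^3+O(\gamma^2h^4)\bigr)\vI_d$, not $\Theta(h/\gamma)\,\vI_d$ as in your expansion (the display in the main text has sign errors in that entry; the paper's own proof, which finds eigenvalues of orders $\gamma h^3$ and $h/\gamma$, is consistent with the correct matrix). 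With the correct $(1,1)$ entry one gets $\det M_\Gamma\asymp h^4$, so $M_\Gamma^{-1}$ has entries of orders $1/(\gamma h^3)$, $1/h^2$ and $\gamma/h$ --- not diagonal $\Theta(\gamma/h)$ and off-diagonal $O(\gamma^2)$ as you claim. Consequently all three terms of your quadratic form $Q=g_1^2(M_\Gamma^{-1})_{11}+2g_1g_2(M_\Gamma^{-1})_{12}+g_2^2(M_\Gamma^{-1})_{22}$ are of the same order $h/\gamma$ (roughly $\tfrac{3h}{2\gamma}-\tfrac{3h}{\gamma}+\tfrac{2h}{\gamma}$), and the step asserting that the $g_1$ and cross terms are smaller by a factor $(\gamma h)^2$ fails; the bound only emerges after this cancellation. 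The conclusion itself is still true --- indeed $G_h'^{T}\Gamma_h'^{-2}G_h'=\mathrm{diag}\bigl(\tfrac{h}{2\gamma}\vI_d,\,0\bigr)$ exactly, which is precisely how the paper establishes item 2 --- so to repair Part 2 you must either perform this exact computation or track the cancellation explicitly, rather than discarding the off-diagonal and $g_1$ contributions as negligible. (The same misreading makes your Part 1 diagonal values inaccurate, but there it is harmless since the row-sum bound still yields $\lesssim h/\gamma$.)
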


We now prove Proposition~\ref{ULMCamgm}. 
\begin{proof}
    Let $h = \ebk$, and \[T\Big(\begin{bmatrix}
        u\\v
    \end{bmatrix}\Big) = A_\mathcal M(h)\begin{bmatrix}
        u\\v
    \end{bmatrix} + G_\mathcal M(h)\begin{bmatrix}
        -\nabla f(u)\\0
    \end{bmatrix}.\]
    Given Assumption \ref{well_conditioned_F}, with $\gamma = c\sqrt L$ for some $c \geq \sqrt 2,$ the map $T$ is Lipschitz with $\|T\|_\text{Lip} \leq 1 - \frac{\alpha}{\sqrt L}h + O(Lh^2)$ (Lemma 16, Zhang et al.~\cite{zhang2023improveddiscretizationanalysisunderdamped}.) Under our assumptions we have $L(\ebk)^2 \lesssim \frac{\alpha}{\sqrt L}\cdot \ebk,$ and $T$ is thus a contraction with parameter $1-\Omega(\frac{\alpha\eta}{\sqrt L k}).$  Under the event $\g$, we have 
    \begin{align*}
        \Big\|X_{t,i+1} - \tilde X_{t,i+1}\Big\|^2 & = \Big\|T(x) - T(y)\Big\|^2 + \Big\|\Gamma_\mathcal M\Big(\ebk\Big)(\y - \tilde Z_{t,i})\Big\|^2 \\ & + 2 \Big\langle \Gamma_\mathcal M\Big(\ebk\Big)(\y - \tilde Z_{t,i}), T(x) - T(y) \Big\rangle.
    \end{align*}
     By the definition of $\zt$, 
    \[\Gamma_\mathcal M\Big(\ebk\Big)(\zt - \y) = \Gamma_\mathcal M\Big(\ebk\Big)(\z+S_{t,i}-\y) + G_\mathcal M\Big(\ebk\Big)\begin{bmatrix}
        \nabla f(\tutki) - \nabla f(\tutki^+)\\0
    \end{bmatrix}.\]
    Conditioned on $\g$, $(\bern)$ is zero mean and $\z$ and $\y$ are standard Gaussians. This gives $\e[\Gm(\ebk)(\z' - \tilde Z_{t,i})|\g] = \gm\Big(\ebk\Big) \begin{bmatrix}
        \nabla f(\tutki) - \nabla f(\tutki^+)\\0
    \end{bmatrix}$. By item 3 of Proposition \ref{spectralNormBounds}, and the contractivity of $T$, we get 
    \begin{align*}
        \e[\|X_{t,i+1} - \tilde X_{t,i+1}\|^2 | \g] & \leq (1-\Omega(\frac{\alpha\eta }{\gamma k}))\|x-y\|^2 + \mathcal O\Big[\Big\|\Gm\Big(\ebk\Big)(\y - \tilde Z_{t,i})\Big\|^2  \\ & + \frac{\eta L}{k\gamma}\|u^+ - \tilde u\|\cdot \|x-y\|\Big].
    \end{align*}

    An application of the AM-GM inequality gives 
    \[\frac{\eta L}{\gamma k}\|u^+- \tilde u\|\cdot\|x-y\| \lesssim \frac{\eta L^2}{\tau \alpha \gamma k}\|u^+ - \tilde u\|^2 + \tau \frac{\alpha \eta }{\gamma k}\|x-y\|^2,\]
    Where $\tau>0$ is arbitrary. Choose $\tau$ small enough so that the second term can be absorbed into $(1-\Omega(\frac{\alpha\eta }{\gamma k}))\|x-y\|^2$. 
    We also have
    \begin{align*}
    \e[\|\Gm\Big(\ebk\Big)(\zt - \y)\|^2|\g] & \lesssim \|\gm\Big(\ebk\Big)\begin{bmatrix}
        \nabla f(\tutki) - \nabla f(\tutki^+)\\0
    \end{bmatrix}\|^2 \\ & + \|\Gm\Big(\ebk\Big)\|^2 \cdot \e[\|\z+S_{t,i}-\y\|^2|\g].
    \end{align*}
  
    By item 3 of Proposition \ref{spectralNormBounds}, $\|\gm\Big(\ebk\Big)\begin{bmatrix}
        \nabla f(\tutki) - \nabla f(\tutki^+)\\0
    \end{bmatrix}\|^2 \leq \frac{\eta^2 L^2}{k^2 \gamma^2}\|\tilde u-u^+\|^2 \leq \frac{\eta L^2}{\alpha \gamma k}\|\tilde u-u^+\|^2;$ and by item 1, $\|\Gm\Big(\ebk\Big)\|^2 \leq \frac{\eta}{\gamma k}.$
    \begin{align*}
    \e[\|\Gm(\ebk)(\zt - \y)\|^2|\g] & \lesssim \frac{\eta L^2}{\alpha \gamma k}\|\tilde u-u^+\|^2 + \frac{\eta}{\gamma k}\e[\|\z+S_{t,i}-\y\|^2|\g].
    \end{align*}
\end{proof}

\subsection{Proof of Proposition~\ref{spectralNormBounds}}
\label{sec:spectralNormBoundproof}
\begin{proof}
    The eigenvalues of $\Gamma_\mathcal M(h)^2$ are 
    \begin{align*}
         & E_1 = \frac{\exp(-2\gamma h)}{\gamma^2}(a-b),  & E_2 = \frac{\exp(-2\gamma h)}{\gamma^2}(a+b)
    \end{align*}
    where $a = -1+\exp(2\gamma h)(1+2\gamma h),$ and \[b = \sqrt{1-32\exp(3\gamma h)+2\exp(2\gamma h)(7+2\gamma h)+ \exp(4\gamma h)(17-4\gamma h+4\gamma^2 h^2)}.\]
    
    Taylor expansion in the variable $h$ gives 
    \[\frac{\exp(-2\gamma h)}{\gamma^2}a = \frac{4h}{\gamma}-2h^2+ \frac{4\gamma h^3}{3} + O(\gamma^2 h^4),\]
    \[\frac{\exp(-2\gamma h)}{\gamma^2}b = \frac{4h}{\gamma}-2h^2+ \frac{7\gamma h^3}{6} + O(\gamma^2 h^4).\]
    
    As a result, the eigenvalues $E_1$ and $E_2$ are of order $\gamma h^3$ and $\frac{h}{\gamma}$ respectively, with $E_2 \geq E_1$ being the spectral norm of $\Gamma_\mathcal M(h)^2.$
    We compute the inverse:
    \[\det(\Gamma_\mathcal M(h)^2) = \frac{8}{\gamma^4}\exp(-2\gamma h)(-1+\exp(\gamma h))(2+\gamma h + \exp(\gamma h)(-2+\gamma h)),\] 
    
    \[\Gamma_\mathcal M(h)^{-2} = \det(\Gamma_\mathcal M(h)^2)^{-1}\begin{bmatrix}
    \frac{4(1-\exp(-\gamma h) - (1-\exp(2\gamma h)) + 2\gamma h}{\gamma^2}\vI_d & -\frac{2\gamma h - (1-\exp(2\gamma h))}{\gamma^2}\vI_d \\
    -\frac{2\gamma h - (1-\exp(2\gamma h))}{\gamma^2}\vI_d & \frac{4(1-\exp(-\gamma h) + (1-\exp(2\gamma h)) + 2\gamma h}{\gamma^2}\vI_d
    \end{bmatrix}.\]
    
    An explicit computation gives \[G_\mathcal M(h)^T \Gamma_\mathcal M(h)^{-2}G_\mathcal M(h) = \begin{bmatrix}
        \frac{h}{2\gamma} & 0 \\ 0 & 0
    \end{bmatrix}.\] 
    A Taylor expansion on the entries of $G_\mathcal M(h)$ shows 
    \[G_\mathcal M(h) = \begin{bmatrix} \frac{h^2}{2} + O(\gamma h^3) & 0 \\
        \frac{2h}{\gamma}-\frac{h^2}{2} + O(\gamma h^3) & 0 \\
    \end{bmatrix}.\] Item 3 of the proposition follows from this and the smoothness of $f$ -- Assumption \ref{well_conditioned_F}. 
\end{proof}
\begin{lemma}\label{smooth_gradient_bounds}
    Assume $\pi = \exp(-f)$ is $L$-smooth, and let $p \in \sN$. Then 
    \[\int_{\sR^d}||\nabla f(x)||^{2p} d\pi(x) \leq \prod_{n=1}^p(2n-1) \cdot (Ld)^p.\]
\end{lemma}
\begin{proof}
This is a generalization of \cite[Lemma 11]{vempala2022rapidconvergenceunadjustedlangevin}. By definition, we have
\begin{align*}
    \int_{\sR^d}||\nabla f(x)||^{2p} d\pi(x) & = \int_{\sR^d} \exp(-f(x))\Big[\sum_{i=1}^d \Big(\frac{\partial f}{\partial x_i}\Big)^2\Big]^p dx
    \intertext{By Jensen's inequality, we get}
    & \leq d^{p-1} \sum_{i=1}^d\int_{\sR^d} \exp(-f(x)) \Big(\frac{\partial f}{\partial x_i}\Big)^{2p}dx \\
    \intertext{Applying integration by parts along $x_i,$ we get}
    & = d^{p-1} (2p-1)\sum_{i=1}^d\int_{\sR^d} \exp(-f(x)) \Big(\frac{\partial^2 f}{\partial x_i^2}\Big)\Big(\frac{\partial f}{\partial x_i}\Big)^{2p-2}dx \\
    \intertext{Since $f$ is $L$-smooth, we have $\frac{\partial^2f}{\partial x_i^2} \leq L.$}
    & \leq Ld^{p-1}(2p-1)\sum_{i=1}^d\int_{\sR^d}\exp(-f(x))\Big(\frac{\partial f}{\partial x_i}\Big)^{2p-2}dx \\
    \intertext{By a repeated application of integration by parts, we get}
    & \leq L^p d^{p-1} \prod_{n=1}^p(2n-1) \sum_{i=1}^d \int_{\sR^d} \exp(-f)dx \\
    \intertext{Since $\pi$ is a probability measure, $\int\exp(-f)=1.$}
    & = \prod_{n=1}^p(2n-1)(Ld)^p.
\end{align*}
\end{proof}

\newpage
\end{document}